\documentclass[11pt]{article}    

\usepackage[margin=2.35cm]{geometry}
\usepackage{graphicx}
\usepackage{amsmath,amsfonts,amsthm,mathrsfs,amssymb}
\usepackage{epstopdf}
\usepackage{hyperref}
\usepackage{soul}
\usepackage{caption}
\usepackage{subcaption}
\usepackage{comment}
\usepackage{tikz}
\newcommand{\x}{\mrm{x}}
\newcommand{\eps}{\varepsilon}
\usetikzlibrary{angles,quotes}

\newtheorem{theorem}{Theorem}[section]
\newtheorem{lemma}[theorem]{Lemma}
\newtheorem{remark}[theorem]{Remark}
\newtheorem{definition}[theorem]{Definition}

\newtheorem{proposition}[theorem]{Proposition}

\theoremstyle{definition}
\newtheorem{example}[theorem]{Example}

\usepackage{xcolor}

\newcommand{\Blu}[1]{{#1}}

\newcommand{\pare}[1]{\left(#1\right)}
\newcommand{\Pare}[1]{\big(#1\big)}
\newcommand{\PAre}[1]{\Big(#1\Big)}
\newcommand{\abs}[1]{\left\lvert #1 \right\rvert}

\newcommand{\AnD}{\quad\text{and}\quad}
\newcommand{\AND}{\qquad\text{and}\qquad}

\newcommand{\RR}{\mathbb{R}}
\newcommand{\NN}{\mathbb{N}}

\newcommand{\CC}{\mathbb{C}}

\newcommand{\In}{\mathrm{in}}

\newcommand{\resp}{\textit{resp. }}

\DeclareMathOperator{\Id}{Id}

\usepackage{hyperref}
\usepackage[numbers]{natbib}
\setlength{\bibsep}{0.0pt}
\hypersetup{
	pdfpagemode=UseThumbs,
	pdftoolbar=true,        
	pdfmenubar=true,        
	pdffitwindow=false,     
	pdfstartview={Fit},    
	pdftitle={Examples of non-scattering inhomogeneities},    
	pdfauthor={L. Chesnel, H. Haddar, H. Li, J. Xiao},     
	pdfsubject={},  
	pdfcreator={L. Chesnel, H. Haddar, H. Li, J. Xiao},   
	pdfproducer={L. Chesnel, H. Haddar, H. Li, J. Xiao}, 
	pdfkeywords={}, 
	pdfnewwindow=true,      
colorlinks=true,       
linkcolor=magenta,          
citecolor=red,        
filecolor=cyan,      
urlcolor=blue           
}
\newcommand{\Om}{\Omega}
\newcommand{\om}{\omega}
\newcommand{\mL}{\mrm{L}}
\newcommand{\mrm}[1]{\mathrm{#1}}
\newcommand{\dsp}{\displaystyle}
\renewcommand{\div}{\mrm{div}}
\newcommand{\mH}{\mrm{H}}

\begin{document}
~\vspace{0.0cm}
\begin{center}
{\sc \bf\huge\selectfont Examples of non-scattering inhomogeneities}\\[16pt]
\end{center}
\begin{center}
\textsc{Lucas Chesnel}$^1$, \textsc{Houssem Haddar}$^1$, \textsc{Hongjie Li}$^{2}$, \textsc{Jingni Xiao}$^{3}$\\[16pt]
\begin{minipage}{0.91\textwidth}
		{\small
$^1$ IDEFIX, Ensta Paris, Institut Polytechnique de Paris, Palaiseau, France;\\
$^2$ Yau Mathematical Sciences Center, Tsinghua University, Beijing, China;\\
$^3$ Department of Mathematics, Drexel University, Philadelphia, Pennsylvania, USA.\\[10pt]
			E-mails:
 \texttt{lucas.chesnel@inria.fr}, \texttt{houssem.haddar@inria.fr}, \texttt{hongjieli@tsinghua.edu.cn}, \texttt{jingni.xiao@drexel.edu}.\\[-14pt]
			\begin{center}
				(\today)
			\end{center}
		}
	\end{minipage}
\end{center}
\textbf{Abstract.} We consider the scattering of waves by a penetrable inclusion embedded in some reference medium. We exhibit examples of materials and geometries for which non-scattering frequencies exist, i.e., for which at some frequencies there are incident fields which produce null scattered fields outside of the inhomogeneity. We show in particular that certain domains with corners or even cusps can support non-scattering frequencies. We relate the latter, for some inclusions, to resonance frequencies for Dirichlet or Neumann cavities. We also find situations where incident non-scattering fields solve the Helmholtz equation  in a neighbourhood of the inhomogeneity and not in the whole space. In relation with invisibility, we give examples of inclusions of anisotropic materials which are non-scattering for all real frequencies. We prove that corresponding material indices must have a special structure on the boundary. 
\\[5pt]
\noindent\textbf{Key words.} Non-scattering frequencies, transmission eigenvalues, interior transmission problem.\\[5pt]
\noindent\textbf{Mathematics Subject Classification.} 35J05, 35P25, 35N25

\section{Introduction}

The so-called transmission eigenvalues play an important role in the study of inverse scattering from inhomogeneous media. They can be helpful in addressing theoretical questions such as uniqueness of the perturbation or in the justification of some reconstruction methods such as the Linear Sampling Method \cite{CoKi96,CaCoHa23}. These special frequencies can also be exploited in imaging algorithms for highly cluttered media \cite{Audi2021,Audi2024}. In general, transmission eigenvalues are associated with eigenfunctions that cannot be exactly represented as solutions to the Helmholtz equation in the whole space. In that case, exact non-scattering does not occur. Proving that incident fields always scatter in certain configurations has been the subject of several works and is usually associated with the presence of some geometric singularities such as corners in the boundary of the inhomogeneity. The techniques and results  differ according to the considered model for the propagation of waves inside the inclusion, namely 
\begin{equation}\label{CaseN}
\Delta u+k^2qu=0
\end{equation}
or 
\begin{equation}\label{CaseAq}
\div(A\nabla u)+k^2qu=0.
\end{equation}
Here $k$ denotes the frequency\footnote{\Blu{We note that $k$ is generally referred to as the wavenumber or spatial frequency, which is proportional to the (time) frequency in the time-harmonic case.}} and $A$, $q$ are functions characterizing the physical properties of the materials. We refer the reader respectively to \Blu{\cite{BlPS14,PaSV14,ElHu18,CaVo23,SaSh21,KLSS24} and \cite{BlLiXi21,CaXi21,Xiao22,CakoniVX23,KSS24} for studies concerning cases \eqref{CaseN} and \eqref{CaseAq}}. In particular, it is proven for model \eqref{CaseN} that if there is a corner in the boundary of the support of $q-1$, then at any frequency an incident wave will always produce a non zero scattered field outside the inhomogeneity, provided that $q$ is sufficiently regular and different from the background coefficient $1$ near the corner. 
However, things can be different for model \eqref{CaseAq} especially when $A$ is different from the background coefficient $\mrm{Id}$ near the boundary of the support of the inhomogeneity. The main goal of the present article is to provide examples for which non-scattering occur for model \Blu{\eqref{CaseAq} when $A\not\equiv \mrm{Id}$ in the inhomogeneous medium.} We show that this can happen even when the domain has singularities like corners or cusps. This analysis can be helpful in better understanding the optimality of some results concerning the absence of non-scattering frequencies in the literature, and provide some insight on the difference between cases (\ref{CaseN}) and (\ref{CaseAq}). We also give examples of situations where non-scattering incident fields have some singularities outside the inhomogeneity. This motivates the definition of non-scattering frequencies adopted below.\\
\newline
Let us first describe the scattering problem we are considering. It is associated with the scalar acoustic wave equation in $\RR^d$, $d\ge2$. We assume that some inclusion is located in a bounded domain $\Om$ with Lipschitz boundary $\partial\Om$
such that $\RR^d\setminus\overline{\Om}$ is connected. In accordance with (\ref{CaseAq}), the inclusion is characterized by some material properties $A\in\mL^{\infty}(\Om,\RR^{d\times d})$, $q\in\mL^{\infty}(\Om,\RR)$ such that
\begin{equation}\label{AssumCoef}
A=\mrm{Id},\ q=1\mbox{ in }\RR^d\setminus\overline{\Om}; \qquad\underset{\x\in\Om}{\mbox{ess inf }}\underset{\xi\in\mathbb{C}^d,|\xi|=1}{\mbox{inf }}(\xi\cdot A(\x)\overline{\xi})>0;\qquad\underset{\x\in \Om}{\mbox{ess inf }}q(\x)>0.
\end{equation}
\Blu{We also assume that $A\not\equiv\mrm{Id}$ in $\overline{\Omega}$.}
Below, we refer without distinction to $(A,q;\Omega)$ as the inclusion or the inhomogeneity. Consider some  
incident field $u_i$ satisfying 
\begin{equation}\label{HelmholtzHomo}
\Delta u_i+k^2u_i=0\quad\mbox{ in }\tilde{\Om},
\end{equation}
where $\tilde{\Om}$ is \Blu{an open} neighbourhood of $\Om$, more precisely a domain such that $\overline{\Om}\subset\tilde{\Om}$. Note that we do not assume $u_i$ be defined and smooth in the whole $\RR^d$. This allows us to take into account the possibility of illuminating the inclusion for example by incident fields due to point sources. The scattering of $u_i$ due to the inhomogeneity is governed by the following time-harmonic problem where $u_s$ denotes the scattered field, 
\begin{equation}\label{eq:MainGov1}
\begin{array}{|rcll}
\div(A\nabla u_s)+k^2 q u_s & = & -\div((A-\mrm{Id})\nabla u_i)-k^2 (q-1) u_i & \mbox{ in }\RR^d\\[5pt]
\multicolumn{4}{|c}{\dsp{\dsp\lim_{R\to +\infty}\int_{|\x|=R}\left|\frac{\partial u_s}{\partial r}-ik u_s \right|^2 ds(\x)  = 0.}}
\end{array}
\end{equation}
The derivatives in the first line of (\ref{eq:MainGov1}) are to be understood in the weak sense where $u_i$ is extended by zero outside $\tilde{\Om}$. 
\Blu{The last line of \eqref{eq:MainGov1} is the so-called Sommerfeld radiation condition. As a consequence to this radiation condition, $u_s$ admits the following asymptotic expansion
\begin{equation*}
	u_s(\x)=\frac{e^{ik|\x|}}{|\x|^{\frac{d-1}{2}}}\Pare{u_\infty(\hat{\x})+O(\frac{1}{|\x|})},\qquad\mbox{as $|\x|\to\infty$},
\end{equation*}
where $\hat{\x}:=\x/|\x|$. The term $u_\infty$ is referred to as the far-field pattern. By Rellich's lemma, $u_s$ in $\RR^d\setminus\overline{\Om}$ and $u_\infty$ have one-to-one correspondence. In other words, $u_\infty=0$ if and only if $u_s=0$ in $\RR^d\setminus\overline{\Om}$.}
The function $u:=u_i+u_s$ is usually referred to as the total field.
For all $k>0$, Problem (\ref{eq:MainGov1}) has a unique solution $u_s$ belonging to $ \mH^1(\mathcal{O})$ for all bounded domains $\mathcal{O}\subset\RR^d$ if $d=2$. When $d=3$, the existence of solutions can be established under the same assumptions. 
The uniqueness for $d=3$ has been proven with the additional condition that $A$ is Lipschitz continuous in $\overline{\Omega}$, by using the unique continuation principle, see e.g. \cite{CaCoHa23}.\\
We define in the following the so-called non-scattering frequencies.
\begin{definition}\label{DefNS}
We say that $k>0$ is a non-scattering frequency if there is \Blu{a domain} $\tilde{\Om}$ such that $\overline{\Om}\subset\tilde{\Om}$ and $u_i$ solving (\ref{HelmholtzHomo}) for which the corresponding scattered field satisfies $u_s = 0$ in $\RR^d\setminus\Omega$. 
Such $u_i$ is referred to as a non-scattering incident field. We denote by $\mathscr{S}_{\mrm{NS}}$ the set of non-scattering frequencies of (\ref{eq:MainGov1}).
\end{definition}
\noindent Notice if $u_i$ is a non-scattering incident field, then the inhomogeneity $(A,q;\Omega)$ is invisible from the exterior under the probing wave $u_i$. In addition,
by setting $v=u_i$, we see that $(u,v)\in \mH^1(\Omega)\times \mH^1(\Omega)$ solves the interior transmission eigenvalue problem
\begin{equation}\label{eq:ITEP}
\begin{array}{|rclll}
\div(A \nabla u) + k^2 q u&=&0&\mbox{ in }\Om \\[3pt]
\Delta v+k^2 v&=&0&\mbox{ in }\Om\\[3pt]
u-v&=&0&\mbox{ on }\partial\Om\\[3pt]
\nu\cdot A\nabla u-\nu\cdot\nabla v&=&0&\mbox{ on }\partial\Om.
\end{array}
\end{equation}
Here $\nu$ stands for the unit outward normal vector to $\partial\Om$.
\begin{definition}\label{df:TE}
We say that $k>0$ is a transmission eigenvalue if there is a non-zero \Blu{solution $(u,v)\in \mH^1(\Omega)\times \mH^1(\Omega)$ to \eqref{eq:ITEP}.} We denote by $\mathscr{S}_{\mrm{TE}}$ the set of transmission eigenvalues.
\end{definition}
\Blu{\noindent We note that if $A-\mrm{Id}$ is either positive definite or negative definite in $\overline\Omega$ near $\partial\Omega$, the natural regularity of transmission eigenfunction $(u,v)$ is $\mH^1(\Omega)\times \mH^1(\Omega)$. However when $A-\mrm{Id}$ changes sign on $\partial\Omega$, Fredholmness can be lost in $\mH^1(\Omega)\times \mH^1(\Omega)$ due to the appearance of strongly oscillating singularities. In this case, the wellposedness of \eqref{eq:ITEP} can be restored in weighted Sobolev spaces; see \cite{dhia2013strongly}.  On the other hand if $A= \mrm{Id}$ near $\partial\Omega$, a transmission eigenfunction $(u,v)$ is usually defined in $\mrm{L}^2(\Omega)\times \mrm{L}^2(\Omega)$ with $u-v\in \mH^2_0(\Omega)$. For more details on  transmission eigenvalue problems, we refer the readers to \cite{CaCoHa23}. 
}

\noindent The \Blu{discussion before Definition~\ref{df:TE}} shows that $\mathscr{S}_{\mrm{NS}}\subset\mathscr{S}_{\mrm{TE}}$. However the converse does not hold in general. More precisely, we have the following characterization result whose proof is straightforward.
\begin{proposition}\label{EquivDefNS}
A real frequency $k\in\mathscr{S}_{\mrm{TE}}$ is also in  $\mathscr{S}_{\mrm{NS}}$ if and only if there is a non-zero eigenpair \Blu{$(u,v)\in \mH^1(\Omega)\times \mH^1(\Omega)$ of \eqref{eq:ITEP} associated with $k$ such that $v$ can be extended in $\tilde{\Om}$, an open neighbourhood of $\Om$, as a function $\tilde{v} \in \mH^1(\tilde\Omega)$} satisfying
\begin{equation}\label{DefHlmEqn}
\Delta \tilde{v}+k^2 \tilde{v}=0\mbox{ in }\tilde{\Om}.
\end{equation}
\end{proposition}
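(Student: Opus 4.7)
The proposition asserts an equivalence, so the plan is to prove the two implications separately, both of which amount to careful bookkeeping between the scattering problem (\ref{eq:MainGov1}) and the interior transmission problem (\ref{eq:ITEP}).

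For the direction $(\Rightarrow)$, assume $k\in\mathscr{S}_{\mathrm{NS}}$ and pick a non-scattering incident field $u_i$ defined in a neighborhood $\tilde\Omega$ of $\overline{\Omega}$, whose scattered field $u_s$ vanishes in $\mathbb{R}^d\setminus\Omega$. Set $u := (u_i+u_s)|_\Omega$ and $v := u_i|_\Omega$. The Helmholtz equation for $v$ on $\Omega$ is immediate from (\ref{HelmholtzHomo}). The equation for $u$ is obtained by adding $\Delta u_i+k^2u_i=0$ to the restriction of (\ref{eq:MainGov1}) to $\Omega$, where the right-hand sides of (\ref{eq:MainGov1}) cancel the $(A-\mrm{Id})$ and $(q-1)$ contributions and produce exactly $\div(A\nabla u)+k^2qu=0$. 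The trace and conormal-trace conditions on $\partial\Omega$ come from the fact that $u_s\in \mH^1_{\mrm{loc}}(\RR^d)$ vanishes in $\RR^d\setminus\Omega$: its trace from inside is zero, giving $u=v$, and testing the weak form of (\ref{eq:MainGov1}) with $\varphi\in C^\infty_c(\RR^d)$ produces, after integration by parts in $\Omega$, the jump condition $\nu\cdot A\nabla u-\nu\cdot\nabla v=0$ on $\partial\Omega$. Then $\tilde v:=u_i$ is the required extension.

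For the direction $(\Leftarrow)$, take a nonzero eigenpair $(u,v)$ with an extension $\tilde v$ satisfying (\ref{DefHlmEqn}) in some $\tilde\Omega\supset\overline\Omega$, and set $u_i:=\tilde v$. Define the candidate scattered field
\begin{equation*}
u_s:=\begin{cases} u-v & \text{in }\Omega,\\ 0 & \text{in }\RR^d\setminus\overline\Omega.\end{cases}
\end{equation*}
The Dirichlet transmission condition $u=v$ on $\partial\Omega$ from (\ref{eq:ITEP}) ensures $u_s\in \mH^1_{\mrm{loc}}(\RR^d)$, and the compact support of $u_s$ makes the Sommerfeld radiation condition automatic. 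It remains to check that $u_s$ solves the equation in (\ref{eq:MainGov1}) in the weak sense: for any $\varphi\in C^\infty_c(\RR^d)$, one rewrites
\begin{equation*}
-\int_{\RR^d}A\nabla u_s\cdot\nabla\varphi+\int_{\RR^d}k^2 q u_s\varphi\ =\ -\int_\Omega A\nabla(u-v)\cdot\nabla\varphi+\int_\Omega k^2q(u-v)\varphi,
\end{equation*}
integrates by parts in $\Omega$, uses $\div(A\nabla u)+k^2qu=0$ and $\Delta v+k^2v=0$ to kill the interior contributions, and uses the Neumann transmission condition $\nu\cdot A\nabla u=\nu\cdot\nabla v$ to reduce the boundary term to exactly $\int_\Omega (A-\mrm{Id})\nabla u_i\cdot\nabla\varphi-\int_\Omega k^2(q-1)u_i\varphi$, which is the weak form of the right-hand side of (\ref{eq:MainGov1}). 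By the uniqueness of the scattered field recalled after (\ref{eq:MainGov1}), $u_s$ is the actual scattered field associated with $u_i$, and since it vanishes in $\RR^d\setminus\Omega$ the frequency $k$ belongs to $\mathscr{S}_{\mrm{NS}}$.

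Neither step is genuinely difficult — the whole content is the two transmission conditions in (\ref{eq:ITEP}), one giving the continuity across $\partial\Omega$ and the other giving the conormal compatibility in the distributional sense. The only point that deserves a line of justification is the $\mH^1_{\mrm{loc}}$ regularity of the gluing and the correct treatment of the distributional source $-\div((A-\mrm{Id})\nabla u_i)$, whose surface-$\delta$ contribution on $\partial\Omega$ is precisely the jump produced by the Neumann transmission condition.
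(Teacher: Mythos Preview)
Your argument is correct and is precisely the routine verification the paper has in mind: the authors give no proof, declaring the result ``straightforward'' after already observing (in the paragraph preceding the definition of transmission eigenvalues) that a non-scattering incident field $u_i$ yields the eigenpair $(u,v)=(u_i+u_s,\,u_i)$. The only nicety worth adding in the $(\Leftarrow)$ direction is that one should pick an eigenpair with $v\not\equiv0$, so that $u_i=\tilde v$ is a genuinely nontrivial incident field; this is tacit in the statement but not automatic from ``$(u,v)$ non-zero'' alone.
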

\noindent Concerning (\ref{eq:ITEP}), it has first been shown that $\mathscr{S}_{\mrm{TE}}$ is discrete for rather mild conditions on the coefficients $A$ and \Blu{$q$} \cite{CaCoHa23}. 
Under a bit more restrictive hypothesis on $A$ and $q$, people have been able to prove that $\mathscr{S}_{\mrm{TE}}$ contains a countably infinite sequence of eigenvalues accumulating only at $+\infty$ (see \cite{PaSy08,Kirs09,CaHa09,CaGH10,Robb13}). In contrast, the set $\mathscr{S}_{\mrm{NS}}$ has been less studied. Certain simple examples of geometries (rectangle, balls) where $\mathscr{S}_{\mrm{NS}}\ne\emptyset$ have been given \Blu{in \cite{LaVaSu,CakoniVX23,KSS24}.} On the other hand, it has been established that for certain scatterers with non-smooth geometries (having corners in 2D or conical tips/edges in 3D), there holds $\mathscr{S}_{\mrm{NS}}=\emptyset$ (see \cite{BlPS14,PaSV14,ElHu18}).\\[5pt]
We would like to note that the non-scattering phenomenon is also related to the free boundary problem. More specifically, we notice that $u_s$ satisfies 
\begin{equation}\label{eq:freebdry}
	\begin{array}{|rclll}
		\div(A \nabla u_s) + k^2 q u_s&=&-f\chi_\Omega&\mbox{ in }\tilde{\Om}\\[3pt]
		\nu\cdot A\nabla u_s&=&-g&\mbox{ on }\partial\Om \\[3pt]
		u_s&=&0&\mbox{ in }\tilde{\Omega}\setminus\overline{\Om},
	\end{array}
\end{equation}
where $f=\div((A-\mrm{Id})\nabla u_i)+k^2 (q-1) u_i$, and $g=\nu\cdot(A-\mrm{Id})\nabla u_i$. If either $f\neq 0$ and $A=\mrm{Id}$ (and hence $g=0$) on $\partial\Omega$, or $g\neq 0$ and $A\neq \mrm{Id}$ on $\partial\Omega$, \Blu{in order for \eqref{eq:freebdry} to have a solution $u^s\in \mH^1(\tilde{\Om})$, one can show that $\partial\Omega$ must satisfy certain regularity conditions, provided $A$ and $q$ are sufficiently regular in $\overline{\Omega}$. The proofs are proceeded} 
by adopting techniques for free boundary problems. For more details, we refer the readers to \Blu{\cite{CaVo23,SaSh21,CakoniVX23,KSS24} and the references therein}.
\\
\newline
As indicated above, the main goal of the present article is to provide examples of configurations where $\mathscr{S}_{\mrm{NS}}\ne\emptyset$. We first consider the case where the material properties have the special form  $A=a\,\mrm{Id}$, $q=a$, with $a>0$ (Section \ref{section1}). This configuration is interesting for several reasons. For instance, one can completely characterize in this case the set $\mathscr{S}_{\mrm{TE}}$ as the union of Dirichlet and Neumann cavity eigenvalues. One can therefore exhibit non-scattering frequencies by  considering either Dirichlet or Neumann eigenvalues. We show in particular that some polygonal domains (convex or not) possess non-scattering frequencies. We also exhibit examples, some inspired by and some extracted from \cite{EckmannP95}, where non-scattering incident fields are not entire solutions to the Helmholtz equation (i.e. $\tilde \Omega \neq \RR^d$). We consider separately Dirichlet and Neumann eigenvalues. While examples for the first case are already present in the literature, the examples for Neumann eigenvalues are harder to find analytically and seem less known. We propose a method to construct such domains based on analyzing the characteristics for the gradient of the solutions to the Helmholtz equation. This allows for instance to prove existence of non-scattering frequencies for certain domains with corners of arbitrary angles and even cusps. We emphasize that on the contrary, non-scattering domains for the Dirichlet eigenvalues can only have corners of particular apertures. 
In Section \ref{section3}, we work with anisotropic materials and exhibit configurations where non-scattering occurs at any frequencies. The first examples are inspired by the concept of invisibility by diffeomorphisms \cite{KohnV84}. For associated anisotropies we prove in particular that the matrix $A$ has a special structure at irregular points of the boundary as it must satisfy $A \nu = \nu$ (see also \cite{CakoniVX23}). We also provide other examples of anisotropies (not associated with diffeomorphism transformations) for which non-scattering occur at any frequency in the case of regular and non-regular domains.\\
\newline
Note: we say that a function is an entire solution of the Helmholtz equation if it solves the homogeneous Helmholtz equation in $\RR^d$.

\section{Non-scattering in the case $A=a\,\mrm{Id}$, $q=a$, with $a>0$}
\label{section1}

In this section, we make the assumption that there holds
\begin{equation}\label{eq:Aqconst}
		A=a\,\mrm{Id}\AnD q=a\qquad\mbox{ in }\Om,
\end{equation}
where $a>0$ is a constant such that $a\ne1$. In this particular situation, we first show that $\mathscr{S}_{\mrm{TE}}$ is formed by the union of the eigenvalues of the Dirichlet and Neumann Laplacians in $\Om$. More precisely, introduce the problems
\begin{equation}\label{eq:eiDN}
\begin{array}{|rl}
			\Delta w_D+k^2w_D=0&\mbox{ in }\Omega\\[3pt]
			w_D=0&\mbox{ on }\partial\Omega
\end{array}\qquad\mbox{ and }\qquad
\begin{array}{|rl}
			\Delta w_N+k^2w_N=0&\mbox{ in }\Omega\\[3pt]
			\partial_{\nu}w_N=0&\mbox{ on }\partial\Omega.
\end{array}
\end{equation}
We denote by $\mathscr{S}_\mrm{D}$ (resp. $\mathscr{S}_\mrm{N}$) the \Blu{set of positive numbers $k$} such that the Dirichlet (resp. Neumann) problem \eqref{eq:eiDN} admits non-zero solutions in $\mH^1(\Om)$. We have the following result.
	\begin{lemma}\label{lem:DN}
When $A$, $q$ satisfy (\ref{eq:Aqconst}), we have $\mathscr{S}_{\mrm{TE}}=\mathscr{S}_\mrm{D}\cup\mathscr{S}_\mrm{N}$.
	\end{lemma}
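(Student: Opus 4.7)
The plan is to diagonalize the transmission system by introducing two linear combinations of $u$ and $v$ that decouple the boundary conditions into a pure Dirichlet and a pure Neumann condition on $\partial\Om$. Since $A=a\,\mrm{Id}$ and $q=a$, the first equation of (\ref{eq:ITEP}) reduces to $a\,(\Delta u+k^2 u)=0$, so both $u$ and $v$ are Helmholtz solutions in $\Om$ at frequency $k$. The boundary conditions become $u=v$ and $a\,\partial_{\nu}u=\partial_{\nu}v$ on $\partial\Om$.

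The key step is to set
\[
z:=u-v \AND y:=au-v.
\]
Both $y$ and $z$ are Helmholtz solutions in $\Om$ at frequency $k$ (linearity). Moreover, $z=u-v=0$ on $\partial\Om$ and $\partial_{\nu}y=a\,\partial_{\nu}u-\partial_{\nu}v=0$ on $\partial\Om$. Hence $z$ is a Dirichlet eigenfunction (or $\equiv 0$) and $y$ is a Neumann eigenfunction (or $\equiv 0$). Because $a\ne 1$, the transformation $(u,v)\mapsto(y,z)$ is invertible:
\[
u=\frac{y-z}{a-1},\qquad v=\frac{y-az}{a-1}.
\]
In particular $(u,v)=(0,0)\iff(y,z)=(0,0)$.

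For the inclusion $\mathscr{S}_{\mrm{TE}}\subset\mathscr{S}_{\mrm{D}}\cup\mathscr{S}_{\mrm{N}}$, I take a nonzero eigenpair $(u,v)$ at frequency $k$ and observe that the above construction produces $(y,z)\ne(0,0)$; if $z\not\equiv 0$ then $k\in\mathscr{S}_\mrm{D}$, and if $y\not\equiv 0$ then $k\in\mathscr{S}_\mrm{N}$. For the reverse inclusion, given a Dirichlet eigenfunction $w_D$ at $k$, I set $z=w_D$, $y=0$ and define $u,v$ by the inversion formulas above; one checks immediately that $(u,v)$ solves (\ref{eq:ITEP}) and is nonzero (since $a\ne 1$). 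Similarly, given a Neumann eigenfunction $w_N$ at $k$, take $y=w_N$, $z=0$, which yields $u=v=w_N/(a-1)$, a nonzero eigenpair of (\ref{eq:ITEP}).

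There is no real obstacle here; the only point to watch is that the change of variables is well-defined and invertible, which is precisely why the hypothesis $a\ne 1$ is required. If $a=1$ the inclusion disappears physically and the transformation degenerates, which is consistent with the problem becoming trivial.
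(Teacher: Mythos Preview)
Your proof is correct and follows essentially the same approach as the paper: both decouple the system via the linear change of variables $w_D=u-v$, $w_N=au-v$ (your $z$ and $y$), reducing the transmission conditions to pure Dirichlet and Neumann data. Your explicit inversion formulas make the reverse inclusion slightly more transparent than the paper's direct check, but the underlying idea is identical.
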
	
\begin{proof}
Suppose that $(u,v)$ is a transmission eigenpair solving \eqref{eq:ITEP}. Define $w_D:=u-v$ and $w_N:=au-v$. Then one readily checks that $w_D$, $w_N$ solve respectively the Dirichlet and Neumann eigenvalue problems \eqref{eq:eiDN}. Moreover, at least one of the functions $w_D$ and $w_N$ must be non-trivial. Hence, we have  $\mathscr{S}_{\mrm{TE}}\subset(\mathscr{S}_\mrm{D}\cup\mathscr{S}_\mrm{N})$.\\
Conversely, assume that $w_D$ is a non-zero solution of the Dirichlet problem appearing in \eqref{eq:eiDN}. Then $(u,v)=(aw_D,w_D)$ is a (non-trivial) eigenpair of \eqref{eq:ITEP}. Similarly, if $w_N$ is a non-zero solution of the Neumann problem appearing in \eqref{eq:eiDN}, then  $(u,v)=(w_N,w_N)$ solves \eqref{eq:ITEP}. This guarantees that $(\mathscr{S}_\mrm{D}\cup\mathscr{S}_\mrm{N})\subset\mathscr{S}_{\mrm{TE}}$.
\end{proof}
\begin{remark}	
Note that the case where $A$, $q$ satisfy (\ref{eq:Aqconst}) is a situation where all the $k^2\in\CC$ such that (\ref{eq:ITEP}) admits a non-zero solution \Blu{must be real numbers}. It is not known if this can happen with other pairs $A$, $q$.
\end{remark}

\noindent From Lemma \ref{lem:DN}, when $A$, $q$ satisfy (\ref{eq:Aqconst}), the question of finding non-scattering eigenvalues can be reformulated as ``are there eigenfunctions of the Dirichlet or Neumann Laplacian in $\Omega$ that can be extended as solutions to the Helmholtz equation in a neighbourhood of $\overline{\Omega}$?''.
Some positive answers can be given in three situations:\\[3pt]
- For certain geometries $\Om$, one can compute analytically the eigenfunctions of the Dirichlet/Neumann Laplacians and observe that they are defined in larger domains than $\Om$;\\
- For certain $\Omega$, one can use reflections to extend the eigenfunctions of the Dirichlet/Neumann Laplacians in $\Om$ to larger domains;\\
- Given a function $u_i$ solving the Helmholtz equation (\ref{HelmholtzHomo}) in some given domain $\tilde{\Om}$, one can look for $\Omega$, with $\overline{\Om}\subset\tilde{\Om}$, for which $u_i$ is an eigenfunction of the Dirichlet or Neumann Laplacian in $\Om$.\\[3pt]
We present corresponding results in the following subsections.

\subsection{Domains where analytic expressions can be obtained for the eigenfunctions}

Assume here that $\Om$ coincides with the unit square and consider $k\in\mathscr{S}_\mrm{D}$ as well as $w_D$ a corresponding eigenfunction of the Dirichlet problem appearing in (\ref{eq:eiDN}). In an appropriate set of coordinates, $w_D$ writes as a linear combination of the functions
\[
\sin(m\pi x)\sin(n\pi y),\qquad m,n\in\NN^\ast:=\{1,2,3,\dots\}.
\]
In the proof of Lemma \ref{lem:DN}, we have seen that $(u,v)=(aw_D,w_D)$ constitutes an eigenpair of \eqref{eq:ITEP}. Clearly $v$ extends as a function solving the homogeneous Helmholtz equation (\ref{DefHlmEqn}) in $\RR^2$. From Proposition \ref{EquivDefNS}, we infer that $k\in\mathscr{S}_{\mrm{NS}}$. Interestingly, as explained in \cite{BlPS14,CakoniVX23}, in that case one can exhibit non-scattering incident fields $u_i$ which are simple combinations of plane waves. More precisely, $u_i$ such that
\[
\begin{array}{rcl}
u_i(x,y)&=&4\sin(m\pi x)\sin(n\pi y)\\[3pt]
 &=& e^{i\pi(mx-ny)}+e^{-i\pi(mx-ny)}-e^{i\pi(mx+ny)}-e^{-i\pi(mx+ny)}
\end{array}
\]
produces a scattered field which is exactly zero outside of $\Om$. Similarly we establish that $\mathscr{S}_\mrm{N}\subset\mathscr{S}_{\mrm{NS}}$.\\
\newline
This reasoning can be adapted to deal with other simple domains $\Om$ where we can use separation of variables. This allows one to state the following result.

\begin{proposition}\label{Propo1}
Assume that $\Omega\subset\RR^2$ is a rectangle, a disk, or an elliptical domain (whose boundary is an ellipse). Then when $A$, $q$ satisfy (\ref{eq:Aqconst}), we have $\mathscr{S}_{\mrm{NS}}=\mathscr{S}_{\mrm{TE}}=\mathscr{S}_\mrm{D}\cup\mathscr{S}_\mrm{N}$.
\end{proposition}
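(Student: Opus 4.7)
The plan is to combine Lemma \ref{lem:DN} with the extension criterion of Proposition \ref{EquivDefNS}. Lemma \ref{lem:DN} already gives $\mathscr{S}_{\mrm{TE}}=\mathscr{S}_\mrm{D}\cup\mathscr{S}_\mrm{N}$, and by Definition \ref{DefNS} we always have $\mathscr{S}_{\mrm{NS}}\subset\mathscr{S}_{\mrm{TE}}$, so the only thing to prove is the inclusion $\mathscr{S}_\mrm{D}\cup\mathscr{S}_\mrm{N}\subset\mathscr{S}_{\mrm{NS}}$. Given $k\in\mathscr{S}_\mrm{D}$ with eigenfunction $w_D$, the pair $(u,v)=(aw_D,w_D)$ solves \eqref{eq:ITEP}. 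According to Proposition \ref{EquivDefNS}, it then suffices to exhibit, for each of the three geometries, an eigenfunction $w_D$ (resp. $w_N$) that can be extended to a solution $\tilde v$ of $\Delta\tilde v+k^2\tilde v=0$ in some open neighborhood $\tilde\Om$ of $\overline\Om$. The most natural route is to produce extensions that are even \emph{entire} solutions of the Helmholtz equation in $\RR^2$, which is strictly stronger than what Proposition \ref{EquivDefNS} requires.

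First I would handle the rectangle. As already observed in the text for the unit square, the eigenfunctions of $-\Delta$ with Dirichlet (resp. Neumann) conditions on $(0,L_1)\times(0,L_2)$ are tensor products $\sin(m\pi x/L_1)\sin(n\pi y/L_2)$ (resp. $\cos(m\pi x/L_1)\cos(n\pi y/L_2)$), with $k^2=\pi^2(m^2/L_1^2+n^2/L_2^2)$. Each such function is the restriction to $\Om$ of an entire solution of $\Delta\tilde v+k^2\tilde v=0$ on $\RR^2$, so $k\in\mathscr{S}_{\mrm{NS}}$. Next, for the disk $\Om=\{|\x|<R\}$, I would use separation of variables in polar coordinates: eigenfunctions take the form $J_n(kr)\cos(n\theta)$ or $J_n(kr)\sin(n\theta)$, with $k$ determined by $J_n(kR)=0$ in the Dirichlet case and $J_n'(kR)=0$ in the Neumann case. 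The key point is that $J_n(kr)\mrm{e}^{\im n\theta}$, rewritten in Cartesian coordinates, is an entire solution of the Helmholtz equation on $\RR^2$ (it is, up to normalization, the standard Fourier--Bessel mode). Again Proposition \ref{EquivDefNS} yields $k\in\mathscr{S}_{\mrm{NS}}$.

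The remaining and slightly less elementary case is the ellipse. Here I would work in elliptic coordinates $(\mu,\varphi)$, in which the Helmholtz equation separates and the eigenfunctions of the Dirichlet (resp. Neumann) Laplacian are products $\mrm{Mc}_n(\mu;q)\,\mrm{ce}_n(\varphi;q)$ and $\mrm{Ms}_n(\mu;q)\,\mrm{se}_n(\varphi;q)$ of radial and angular Mathieu functions, the frequency $k$ being determined by the vanishing of $\mrm{Mc}_n$ or $\mrm{Ms}_n$ (resp. their $\mu$-derivatives) at the elliptic radius defining $\partial\Om$. These products, written in Cartesian variables, are entire solutions of $\Delta\tilde v+k^2\tilde v=0$ on $\RR^2$; this is a classical fact about Mathieu eigenmodes that I would simply cite. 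Applying Proposition \ref{EquivDefNS} once more closes the proof.

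The main obstacle is conceptual rather than computational: one has to recognize that the explicit separated-variable eigenfunctions produced for each domain are globally defined Helmholtz solutions, not merely solutions on $\Om$. For the rectangle and the disk this is transparent from the formulas, while for the ellipse it relies on the entire character of Mathieu products, which is where I would be most careful with references.
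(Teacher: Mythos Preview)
Your proposal is correct and follows essentially the same route as the paper: reduce via Lemma~\ref{lem:DN} and Proposition~\ref{EquivDefNS} to showing that every Dirichlet/Neumann eigenvalue admits a separated-variable eigenfunction that extends to an entire Helmholtz solution, then treat the rectangle, disk, and ellipse by trigonometric, Bessel, and Mathieu modes respectively. The paper likewise cites the classical regularity of Mathieu functions for the elliptic case rather than proving it from scratch.
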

\noindent Note that to consider the case of elliptic domains, we work with the elliptic coordinates $(\mu,\theta)$ such that, after a rigid change of variables,
\[
x=\alpha \cosh\mu \cos\theta, \qquad y=\alpha \sinh\mu \sin\theta,
\]
where $\mu\geq 0$, $\theta\in \RR/(2\pi)$ and $(\pm\alpha,0)$ stand for the foci of the ellipse. In particular, the curves $\mu=\mrm{constant}>0$ are confocal ellipses. Solutions of (\ref{eq:eiDN}) can be decomposed on functions with separate variables in $\mu$, $\theta$. For the latter functions, we find that the dependences in $\mu$, $\theta$ satisfy respectively a modified Mathieu's equation and a Mathieu's equation with periodic boundary conditions, which are both Sturm-Liouville problems. Moreover it is known that the solutions of the Mathieu's equation which are regular at zero can be extended in $\RR$ so that all eigenfunctions of the Dirichlet and Neumann Laplacians in a bounded elliptical domain can be extended as solutions of an Helmholtz equation in $\RR^2$. For more details, we refer the readers to \cite{Day55,McL64}.\\[3pt]
Similar to Proposition~\ref{Propo1}, we have the following statement in $\RR^3$.
\begin{proposition}\label{Propo3}
	Assume that $\Omega\subset\RR^3$ is a ball, an ellipsoid, or, in an appropriate system of coordinates, there holds $\Omega=I\times\om$ where $I$ is a bounded open interval and $\om\subset\RR^2$ is a rectangle, a disk, or an elliptical domain. Then when $A$, $q$ satisfy (\ref{eq:Aqconst}), we have $\mathscr{S}_{\mrm{NS}}=\mathscr{S}_{\mrm{TE}}=\mathscr{S}_\mrm{D}\cup\mathscr{S}_\mrm{N}$.
\end{proposition}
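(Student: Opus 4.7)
The plan is to invoke Lemma~\ref{lem:DN} together with Proposition~\ref{EquivDefNS}. Since Lemma~\ref{lem:DN} already gives $\mathscr{S}_{\mrm{TE}}=\mathscr{S}_\mrm{D}\cup\mathscr{S}_\mrm{N}$ and the inclusion $\mathscr{S}_{\mrm{NS}}\subset\mathscr{S}_{\mrm{TE}}$ is automatic, it suffices to prove $\mathscr{S}_\mrm{D}\cup\mathscr{S}_\mrm{N}\subset\mathscr{S}_{\mrm{NS}}$. As in the proof of Lemma~\ref{lem:DN}, if $w_D$ (resp.\ $w_N$) is a Dirichlet (resp.\ Neumann) Laplace eigenfunction on $\Omega$ at eigenvalue $k^2$, then $(u,v)=(aw_D,w_D)$ (resp.\ $(u,v)=(w_N,w_N)$) is a transmission eigenpair, so by Proposition~\ref{EquivDefNS} we are reduced to showing that every Dirichlet/Neumann eigenfunction on $\Omega$ extends across $\partial\Omega$ as a solution of $\Delta\tilde v+k^2\tilde v=0$ on some neighborhood $\tilde\Omega\supset\overline\Omega$. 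I would handle the three families of geometries by picking, in each case, coordinates in which separation of variables applies.

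For the ball of radius $R$, I would decompose any eigenfunction in spherical coordinates $(r,\theta,\varphi)$ on the basis $\{\,j_\ell(kr)\,Y_\ell^m(\theta,\varphi)\,\}$, where $j_\ell$ is the spherical Bessel function of the first kind and $Y_\ell^m$ a spherical harmonic. Each such product is a real-analytic entire solution of $\Delta w+k^2w=0$ on $\RR^3$, so $\tilde\Omega=\RR^3$ works; the boundary condition only constrains the admissible $k$'s (zeros of $j_\ell(kR)$ for Dirichlet, of $(j_\ell(kr))'|_{r=R}$ for Neumann).

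For a cylindrical domain $\Omega=I\times\omega$ I would separate the axial variable $x_1\in I$ from the cross-section $x'\in\omega$. The Laplace eigenspaces on $\Omega$ are spanned by tensor products $\phi_n(x_1)\psi_m(x')$, where $\phi_n$ is an eigenfunction of $-d^2/dx_1^2$ on $I$ at eigenvalue $\mu_n\ge 0$ and $\psi_m$ is a Laplace eigenfunction on $\omega$ at eigenvalue $\nu_m=k^2-\mu_n\ge 0$, both with the same (Dirichlet or Neumann) boundary type as the 3D problem. The factor $\phi_n$ is trigonometric and extends as an entire solution of $\phi''+\mu_n\phi=0$ on $\RR$, while $\psi_m$ (or the constant $\psi_m\equiv 1$ when $\nu_m=0$) extends, by Proposition~\ref{Propo1}, as a solution of the 2D Helmholtz equation at frequency $\sqrt{\nu_m}$ defined on all of $\RR^2$, since $\omega$ is a rectangle, a disk, or an ellipse. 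Multiplying the two extensions and summing over the eigenspace yields the required 3D extension to $\tilde\Omega=\RR^3$.

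The main technical difficulty is the ellipsoidal case, which requires separation of variables in ellipsoidal coordinates. The Helmholtz equation then splits into three uncoupled Lamé-type ODEs, and every Dirichlet/Neumann Laplace eigenfunction on the ellipsoid expands on tensor products of the associated Lamé functions restricted to the relevant parameter intervals. The key point, entirely analogous to the Mathieu-function argument recalled after Proposition~\ref{Propo1} (cf.\ \cite{McL64}), is that only those Lamé functions regular on the full real line can enter the expansion of a smooth eigenfunction on $\overline\Omega$, so the resulting product extends as a global solution of $\Delta\tilde v+k^2\tilde v=0$ on all of $\RR^3$. This step is classical but somewhat involved, and is where I expect the heaviest bookkeeping; I would rely on the standard theory of ellipsoidal harmonics rather than reproduce the computations from scratch.
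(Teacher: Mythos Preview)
Your proposal is correct and follows exactly the approach the paper intends: the paper does not give an explicit proof of this proposition but simply states that it is ``similar to Proposition~\ref{Propo1}'', i.e.\ separation of variables in spherical, cylindrical, and ellipsoidal coordinates, with the same Mathieu/Lam\'e regularity argument you outline for the curvilinear cases. Your write-up in fact supplies more detail than the paper itself.
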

\begin{figure}[ht!]
\centering
\raisebox{0.4cm}{\includegraphics[height=3.2cm]{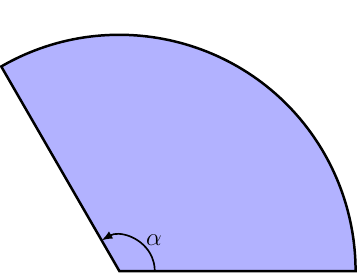}}
\includegraphics[height=4cm]{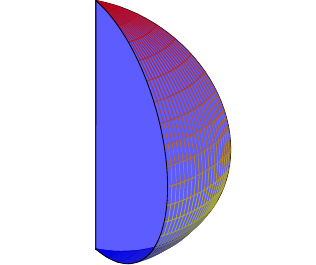}\includegraphics[height=4.2cm]{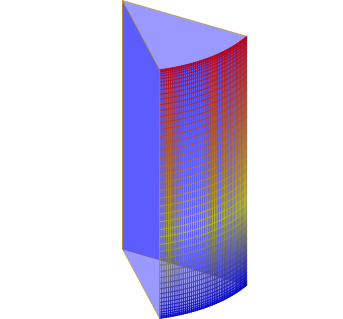}
\caption{Sector (left), spherical wedge (center) and cylindrical wedge (right).\label{Geomcylindrical}} 
\end{figure}

\noindent We now turn our attention to other geometries. Consider some constants $\alpha\in(0,2\pi)$ and $\ell>0$. Define in $\RR^2$ the sector 
\[
\Omega=\{(r\cos\theta,r\sin\theta)\,|\,r\in(0,\ell),\,\theta\in(0,\alpha)\}
\] 
and in $\RR^3$, the spherical wedge
\[
\Omega=\{(r\cos\phi\sin\theta,r\sin\phi\sin\theta,r\cos\theta)\,|\,r\in(0,\ell),\theta\in(0, \pi),\phi\in(0,\alpha)\}
\]
as well as the cylindrical wedge 
\[
\Omega=\{(r\cos\theta,r\sin\theta,z)\,|\,r\in(0,\ell),\,\theta\in(0,\alpha),z\in I\}
\]
(see Figure \ref{Geomcylindrical} for illustrations). Here $I$ is an open bounded interval. For theses domains, depending on the opening angle, there may exist zero or infinitely many non-scattering frequencies.
\begin{proposition}\label{Propo2}
With the notation above, assume that $\Omega$ is either a sector in $\RR^2$, a spherical wedge in $\RR^3$ or a cylindrical wedge in $\RR^3$, of angle $\alpha$. When $A$, $q$ satisfy (\ref{eq:Aqconst}), we have:\\[3pt]
- if $\alpha\in(0,2\pi)\cap\mathbb{Q}\pi$, then $\mathscr{S}_{\mrm{NS}}$ contains an unbounded sequence which accumulates only at $+\infty$;\\[2pt]
- if $\alpha\in(0,2\pi)\cap(\mathbb{R}\pi\setminus\mathbb{Q}\pi)$, then $\mathscr{S}_{\mrm{NS}}=\emptyset$.
\end{proposition}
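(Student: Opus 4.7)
The plan is to combine Lemma~\ref{lem:DN} with Proposition~\ref{EquivDefNS} to reduce to an extension problem: under \eqref{eq:Aqconst}, $k\in\mathscr{S}_{\mrm{NS}}$ if and only if some Dirichlet or Neumann eigenfunction $w$ of $-\Delta$ in $\Omega$ with eigenvalue $k^2$ can be continued as a Helmholtz solution $\tilde v$ to an open neighborhood of $\overline{\Omega}$. The analysis concentrates at the vertex of the sector (in 2D) or along the axis edge of the wedge (in 3D), where the corner Fourier--Bessel structure of $\tilde v$ must be compatible with the angular dependence of $w$ prescribed by separation of variables on $\Omega$.

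For the rational case $\alpha=p\pi/q$ with $\gcd(p,q)=1$, I would exhibit such extensions explicitly. In 2D, the Dirichlet eigenfunctions of the sector are $J_{n\pi/\alpha}(kr)\sin(n\pi\phi/\alpha)$ with $k\ell$ a positive zero of $J_{n\pi/\alpha}$; choosing $n=jp$ for $j\in\NN^\ast$ makes the index $n\pi/\alpha=jq$ an integer, so $J_{jq}(kr)\sin(jq\phi)$ is a standard entire solution of the Helmholtz equation on $\RR^2$. Letting $k\ell$ run through the positive zeros of $J_{jq}$ produces an unbounded sequence in $\mathscr{S}_{\mrm{NS}}$ accumulating only at $+\infty$. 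For the spherical wedge I would combine $\sin(jq\phi)$ with integer-order associated Legendre factors and spherical Bessel radial factors; for the cylindrical wedge, I would add a $\cos(\mu(z-a))$ factor with $\mu$ chosen so the $z$-boundary conditions hold.

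For the irrational case $\alpha/\pi\notin\mathbb{Q}$, I would argue by contradiction. Assume $k\in\mathscr{S}_{\mrm{NS}}$, pick a corresponding $(u,v)$ with extension $\tilde v$, and restrict to a small disk around the vertex of the sector. Since $\tilde v$ solves the Helmholtz equation on this full disk, it admits an expansion
\[
\tilde v(r,\phi)=\sum_{m=0}^{\infty} J_m(kr)\bigl(a_m\cos(m\phi)+b_m\sin(m\phi)\bigr),
\]
with the $J_m(kr)$ linearly independent in $r$ by their distinct leading orders $\sim(kr/2)^m/m!$ near $0$. Imposing the homogeneous angular boundary condition inherited from $w$ at $\phi=0$ and then at $\phi=\alpha$ reduces, in either the Dirichlet or Neumann case, to relations of the form $c_m\sin(m\alpha)=0$ for $m\geq 1$. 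Since $\alpha/\pi$ is irrational, $\sin(m\alpha)\neq 0$ for every $m\geq 1$, so all coefficients with nontrivial angular dependence vanish; in the Dirichlet case this gives $\tilde v\equiv 0$ immediately, while in the Neumann case it leaves $\tilde v=a_0 J_0(kr)$, which must then be excluded by combining the ITEP matching with the boundary condition on the arc $r=\ell$. The 3D cases reduce to the 2D one by Fourier-decomposing $\tilde v$ in the transverse variable ($\theta$ via a Legendre series for the spherical wedge, $z$ via a cosine series for the cylindrical wedge), so that each mode solves a 2D Fourier--Bessel extension problem in $(r,\phi)$.

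The hard part will be ruling out the radially symmetric Neumann mode in the irrational case: the purely angular argument above does not by itself eliminate the candidate $\tilde v=a_0 J_0(kr)$, and one must leverage the specific ITEP coupling between $u$ and $v$ together with the outer boundary data to force $a_0=0$. A secondary difficulty is the treatment of non-integer-order associated Legendre functions at the poles $\theta=0,\pi$ in the spherical wedge; I would handle this by performing the Fourier expansion in $\phi$ first and then exploiting the regularity requirement of $\tilde v$ on the full axis $\{r=0\}$ to show that non-integer azimuthal indices cannot survive.
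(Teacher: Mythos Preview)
Your treatment of the rational case is correct and essentially the paper's: choosing $n$ so that $n\pi/\alpha\in\NN^\ast$ makes $J_{n\pi/\alpha}(kr)\sin(n\pi\phi/\alpha)$ an entire Helmholtz solution, and running through the zeros of the radial factor gives an unbounded sequence in $\mathscr{S}_{\mrm{NS}}$.

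For the irrational case there is a genuine gap, precisely where you flag it. After your angular argument kills the $m\ge1$ modes, the Neumann branch leaves $\tilde v=a_0J_0(kr)$, which you propose to eliminate ``by combining the ITEP matching with the boundary condition on the arc $r=\ell$''. That step cannot be carried out: if $k>0$ satisfies $J_0'(k\ell)=0$ then $w_N=J_0(kr)$ is a genuine Neumann eigenfunction of the sector for \emph{every} opening $\alpha$, and $(u,v)=(w_N,w_N)$ solves \eqref{eq:ITEP} with $v$ entire on $\RR^2$. Concretely, the incident field $u_i=J_0(kr)$ has $\partial_\nu u_i=0$ on all three pieces of $\partial\Omega$, so the source in \eqref{eq:MainGov1} vanishes identically and $u_s\equiv0$. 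The analogous radially symmetric modes $j_0(kr)$ and $J_0(\mu r)\cos\!\big(p\pi(z-z_1)/|I|\big)$ do the same for the spherical and cylindrical wedges. Thus $\mathscr{S}_{\mrm{NS}}=\emptyset$ cannot be established for irrational $\alpha/\pi$; the paper's one-line ``the analysis is similar for Neumann'' passes over the same $m=0$ mode. (A smaller point: your opening reduction asserts that $k\in\mathscr{S}_{\mrm{NS}}$ iff some $w_D$ or $w_N$ extends, but Lemma~\ref{lem:DN} together with Proposition~\ref{EquivDefNS} only yield that $v=(w_N-aw_D)/(a-1)$ extends, which does not immediately imply that either $w_D$ or $w_N$ individually does; the paper sidesteps this by working from the outset with the explicit separated eigenfunctions of the sector.)
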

\begin{remark}
In fact, we show in the proof that $\mathscr{S}_{\mrm{NS}}$ coincides exactly with $\mathscr{S}_{\mrm{TE}}=\mathscr{S}_\mrm{D}\cup\mathscr{S}_\mrm{N}$ if and only if $\alpha\in(0,2\pi)\cap(\pi/\mathbb{N}^\ast)$.
\end{remark}
\begin{remark}
In the second item of Proposition \ref{Propo2}, the fact that $\mathscr{S}_{\mrm{NS}}=\emptyset$ in that domains $\Om$ is not only due to the value of the opening angle $\alpha$ but also to the shape of the whole $\Om$. Indeed in \S\ref{sec:Neumann}, we will exhibit examples of geometries in the plane supporting non-scattering frequencies with corners of arbitrary $\alpha\in(0,2\pi)$.
\end{remark}
\begin{proof}
	Let $\Omega$ be a sector in $\RR^2$. The eigenfunctions of the Dirichlet Laplacian in $\Omega$ coincide with the functions $J_{m\pi/\alpha}(\mu_mr)\sin(m\pi\theta/\alpha)$, with $m\in\NN^\ast$ and $\mu_m>0$ such that $J_{m\pi/\alpha}(\mu_m\ell)=0$. Here $J_{m\pi/\alpha}$ is the Bessel function of the first kind with order $m\pi/\alpha$\Blu{, and  $(r,\theta)$ are the polar coordinates of $\x\in\RR^2$. 
	Notice that the function $v$ given by $v(\x)=J_{m\pi/\alpha}(\mu_mr)\sin(m\pi\theta/\alpha)$ is real-analytic in an open neighbourhood of the origin if and only if $m\pi/\alpha\in\mathbb{Z}$. Moreover, if $m\pi/\alpha=l\in\mathbb{Z}$ then $(x,y)\mapsto J_{l}(\mu_mr)\sin(l\theta)$ is in fact an entire solution of the Helmholtz equation. Hence we infer that an eigenfunction can be extended as a solution of the Helmholtz equation in an open neighbourhood of $\Omega$ if and only if $\alpha\in\mathbb{Q}\pi$.} 
The analysis is similar for eigenfunctions of the Neumann Laplacian.\\
The cases of spherical and cylindrical wedges in $\RR^3$ can be dealt similarly by working, respectively, with spherical and  cylindrical coordinates.
\end{proof}
\noindent With this approach, we could also consider 3D conical tip, i.e. domains of the form 
\[
\Omega=\{(r\cos\theta\cos\phi,r\cos\theta\sin\phi,r\sin\theta)\,|\,0<r<\ell,0<\theta< \theta_0,0\le\phi<2\pi\}
\]
with $\theta_0>0$. In this case, the possibility of extending eigenfunctions of the Dirichlet/Neumann Laplacians in $\Om$ depends on whether $\cos\theta_0$ is a zero of the Legendre polynomial $P^m_n$ for some integers $n$ and $m\in[-n,n]$. We choose not to elaborate much on this direction.\\
\newline
Let us mention that some of the discussions above for 2D cases can also be found in \cite{KS6033}.

\subsection{Domains where eigenfunctions can be extended by reflections}

Now we present other geometries where some or all the Dirichlet/Neumann eigenfunctions of (\ref{eq:eiDN}) can be extended, this time by working with reflections. We start with a definition.
\begin{definition}\label{Defproper paving unit}
Let $\Omega$ be a polygon of $\RR^2$ (resp. a polyhedron of $\RR^3$). We say that $\Omega$ is a \emph{proper paving unit} if the following two conditions hold:\\[5pt]
- One can find a combination of successive reflections of $\overline{\Omega}$ with respect to the edges (resp.  faces) that allows one to cover \Blu{an open set $\tilde{\Om}$ such that $\overline{\Om}\subset\tilde\Om$,}  
with the overlaps only on the skeleton $\mathcal{S}$ consisting of the edges (resp. faces) of $\partial\Om$ and their images by the reflections. \\[5pt]
- If one assigns a different colour to each of the edges (resp. faces) of $\partial\Om$, then each element of $\mathcal{S}$ may inherit two or more colours due to reflections. We require that every element of $\mathcal{S}$ has only one colour under reflections. 
\end{definition}
\noindent In Figure \ref{fig:reflec}, we present an example of domain $\Om$ and covering which does not satisfy the second item of Definition \ref{Defproper paving unit}. \Blu{We also note that Definition~\Ref{Defproper paving unit} concerning polygons and polyhedra 
	 can be easily generalized to polytopes in $\RR^d$ for any $d\ge 2$. In this case, the corresponding reflections are operated with respect to the facets.
 } 

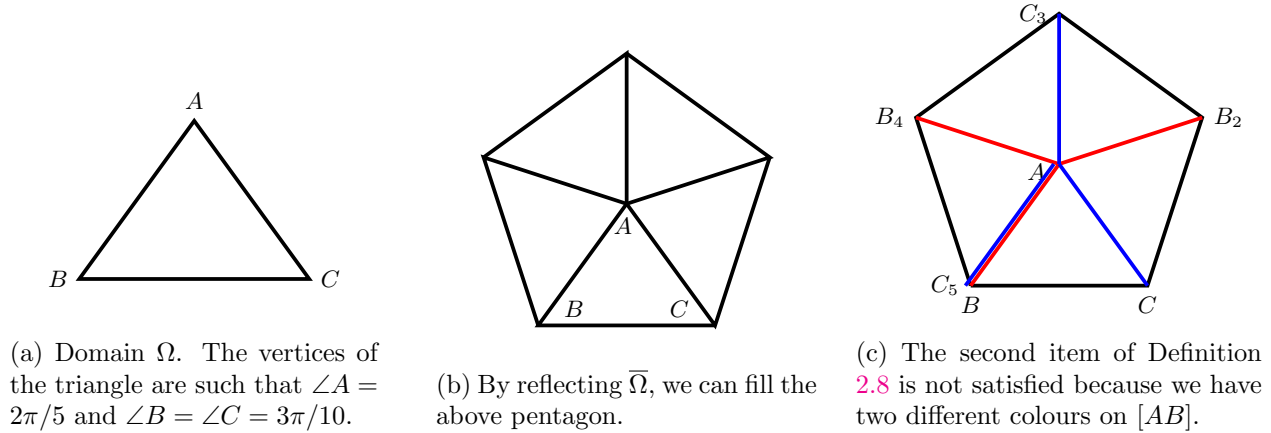
\begin{figure}[!ht]
\def\len{2}
\def\ngn{5}
\def\ang{360/\ngn}
\def\angi{-90-\ang/2}
\begin{subfigure}{.29\textwidth}
\centering
\vspace{1cm}
\begin{tikzpicture}[scale=1.3,font=\footnotesize]
\path (\angi:\len) coordinate (B) (\angi+\ang:\len) coordinate (C) (0:0) coordinate (A);
\draw[line width=0.5mm] (A)
			-- (B) node [at start, above]{$A$}
			-- (C) node [at start, left]{$B$} 
			-- (A) node [at start, right]{$C$}-- cycle;
\end{tikzpicture}\vspace{0.35cm}
\caption{Domain $\Omega$. The vertices of the triangle are such that $\angle A=2\pi/5$ and $\angle B=\angle C=3\pi/10$.}
	\end{subfigure}\qquad
	\begin{subfigure}{.3\textwidth}
	\centering

	\begin{tikzpicture}[font=\footnotesize]
		\foreach \n in {1,...,\ngn}
		{
			\path (\angi+\n*\ang-\ang:\len) coordinate (B) (\angi+\n*\ang:\len) coordinate (C) (0:0) coordinate (A);
		\draw[line width=0.5mm] (A) -- (B) -- (C);}
	\node at (-.05,-.3) {$A$};
	\node at (-.7,-1.4) {$B$};
	\node at (.7,-1.4) {$C$};
	\end{tikzpicture}\vspace{0.35cm}
	\caption{By reflecting $\overline{\Omega}$, we can fill the above pentagon.}
\end{subfigure}
\quad
\begin{subfigure}{.32\textwidth}
	\centering
	\begin{tikzpicture}[font=\footnotesize]
		\foreach \n in {1,...,\ngn}{
			\path (\angi+\n*\ang-\ang:\len) coordinate (V-\n) (0:0) coordinate (A);}
		\node at (-.3,-.1) {$A$};
		\draw [black,line width=0.5mm] (V-1)
		-- (V-2) node [at start, below, black]{$B$}
		-- (V-3)  node [at start, below, black]{$C$}
		-- (V-4)  node [at start, right, black]{$B_2$}
		-- (V-5)  node [at start, left, black]{$C_3$}
		-- (V-1)  node [at start, left, black]{$B_4$} node [at end, left, black]{$C_5$};
		\foreach \n in {1,3,5}{
			\draw [red,line width=0.5mm] (A)
			-- (V-\n);}
		\foreach \n in {2,4}{
			\draw [blue,line width=0.5mm] (A)
			-- (V-\n);}
\draw[blue,line width=0.5mm,transform canvas={xshift=-2pt}] (A) -- (V-1);
	\end{tikzpicture}
	\caption{The second item of Definition \ref{Defproper paving unit} is not satisfied because we have two different colours on $[AB]$.}
\end{subfigure}
\caption{A domain $\Omega$ that is not a proper paving unit.}
\label{fig:reflec}
\end{figure}

\begin{proposition}
Assume that $\Omega\subset\RR^d$, \Blu{$d\ge 2$}, is a proper paving unit. Then when $A$, $q$ satisfy (\ref{eq:Aqconst}), we have $\mathscr{S}_{\mrm{NS}}=\mathscr{S}_{\mrm{TE}}=\mathscr{S}_\mrm{D}\cup\mathscr{S}_\mrm{N}$.
\end{proposition}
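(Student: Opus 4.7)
The plan is to invoke Lemma \ref{lem:DN} and then extend eigenfunctions by iterated reflections dictated by the paving structure. Since Lemma \ref{lem:DN} already yields $\mathscr{S}_{\mrm{TE}}=\mathscr{S}_\mrm{D}\cup\mathscr{S}_\mrm{N}$ and the inclusion $\mathscr{S}_{\mrm{NS}}\subset\mathscr{S}_{\mrm{TE}}$ is automatic, it only remains to prove $\mathscr{S}_\mrm{D}\cup\mathscr{S}_\mrm{N}\subset\mathscr{S}_{\mrm{NS}}$. Combining Proposition \ref{EquivDefNS} with the eigenpairs $(u,v)=(aw_D,w_D)$ and $(u,v)=(w_N,w_N)$ built in the proof of Lemma \ref{lem:DN}, this reduces to the following: every Dirichlet (resp. Neumann) Laplacian eigenfunction $w$ of $\Om$ can be extended as a function $\tilde{w}$ satisfying $\Delta\tilde{w}+k^2\tilde{w}=0$ in some neighborhood $\tilde{\Om}$ of $\overline{\Om}$.

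I would construct $\tilde w$ directly from the paving. Let $\Om_0=\Om,\Om_1,\dots,\Om_M$ be the sequence of reflected copies produced by Definition \ref{Defproper paving unit}, so that $\tilde{\Om}$ is the interior of $\bigcup_{j=0}^M\overline{\Om_j}$, and for each $j$ let $T_j$ denote the composition of the $\ell_j$ reflections mapping $\Om_0$ onto $\Om_j$. I would then set
\[
\tilde{w}(\x):=\varepsilon^{\ell_j}\,w\Pare{T_j^{-1}(\x)}\qquad\mbox{for }\x\in\Om_j,
\]
with $\varepsilon=-1$ in the Dirichlet case and $\varepsilon=+1$ in the Neumann case. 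On each open tile, $\tilde w$ is then (up to a sign) the pullback of $w$ by an isometry of $\RR^d$, so it solves the Helmholtz equation there.

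The central step, which I expect to be the main obstacle, is to verify that the formula above is actually single-valued, i.e. that the value assigned to a tile $\Om_j$ does not depend on the reflection sequence used to reach it. Two such sequences differ by a cycle enclosing certain elements of the skeleton $\mathcal{S}$, and one must rule out that this cycle produces either two different isometries $T_j$ or two different parities $\ell_j\,(\mrm{mod}\,2)$ for the same point. The second item in Definition \ref{Defproper paving unit} is precisely the condition that identifies each element $e\in\mathcal{S}$ with a single edge/face of $\partial\Om$, so that the reflection rule used to cross $e$ is unambiguous regardless of the path taken; it is exactly this condition that rules out the pathology of Figure \ref{fig:reflec}, where a cycle around the vertex $A$ uses an odd number of reflections and would otherwise force $\tilde{w}\equiv-\tilde{w}$ in the Dirichlet case. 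I would formalize this by viewing the paving as the orbit of $\Om$ under the group generated by the reflections across the edges/faces of $\partial\Om$, with $\mathcal{S}$ as the mirror set, and checking that the colour condition is equivalent to the compatibility needed for the $\pm$ extension rule to descend to the quotient.

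Once well-definedness is established, checking that $\tilde{w}$ actually solves the Helmholtz equation in $\tilde{\Om}$ is routine. Across the relative interior of any codimension-one face of $\mathcal{S}$, the expressions for $\tilde w$ on the two adjacent tiles are, after an ambient isometry, an odd (resp. even) reflection of $w$ across an edge/face of $\partial\Om$ on which $w$ (resp. $\partial_\nu w$) vanishes; the classical reflection principle then delivers a weak solution of Helmholtz across that face. Across the lower-dimensional part of $\mathcal{S}$, of Hausdorff dimension at most $d-2$, $\tilde{w}$ remains in $\mH^1_{\mrm{loc}}(\tilde{\Om})$ and is a distributional solution, hence a classical one by elliptic regularity, which closes the argument.
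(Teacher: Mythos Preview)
Your approach is essentially the paper's: reduce via Lemma~\ref{lem:DN} and Proposition~\ref{EquivDefNS} to extending Dirichlet/Neumann eigenfunctions, then extend by iterated odd/even reflections across the faces of the paving. Your treatment is in fact more careful than the paper's short proof, which neither addresses the well-definedness of the extension (your discussion linking the colour condition to consistency of $T_j$ and of the parity $\ell_j$) nor the codimension-$\ge 2$ part of the skeleton that you handle via $\mH^1_{\mrm{loc}}$ and elliptic regularity.
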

\begin{proof}
Let $\Omega$ be a proper paving unit. Consider one particular edge or face of $\partial\Om$ and introduce some system of coordinates $x',x_d$ with $x'\in\RR^{d-1}$, such that this edge or face lies in the hypersurface $\{x_d=0\}$.
Let $\Omega_1$ denote the reflection of $\Omega$ with respect to the line or plane $\{x_d=0\}$. Consider $v$ an eigenfunction of the Dirichlet Laplacian in $\Om$. Classically we can extend it to $\Omega_1$ by defining $v(x',x_d)=-v(x',-x_d)$ for all $(x',x_d)\in\Omega_1$.
	Similarly, Neumann eigenfunctions can be extended in $\Omega_1$ by defining $v(x',x_d)=v(x',-x_d)$. It can be verified straightforwardly that such extended function $v$ satisfies $\Delta v+k^2v=0$ in the whole interior of $\overline{\Omega\cup\Omega_1}$. Repeating this reflection argument successively, we can extend Dirichlet/Neumann eigenfunctions to \Blu{an open neighbourhood of $\Omega$} as solutions to the homogeneous Helmholtz equation. The proof is completed.
\end{proof}
\noindent From classical results of interior regularity, this shows that in proper paving unit, Dirichlet and Neumann eigenfunctions are smooth up to the boundary because their extensions are real-analytic in \Blu{an open neighbourhood of $\Omega$}.\\
\newline
One can check that triangles that are equilateral (60\textdegree-60\textdegree-60\textdegree), hemiequilateral (30\textdegree-60\textdegree-90\textdegree) or isosceles right (45\textdegree-45\textdegree-90\textdegree) are all proper paving unit of $\RR^2$. In fact, for these three types of triangles the Dirichlet/Neumann eigenfunctions can be expressed with trigonometric functions as discovered by G. Lam\'{e} (see \cite{McCartin11}). Besides, observe that if $\om$ is a proper paving unit of \Blu{$\RR^d$, $d\ge 2$, then $\Omega=I\times \om$ is a proper paving unit of $\RR^{d+1}$} for any bounded interval $I$.\\
\newline
Finally, let us mention that for domains \Blu{$\om$ obtained} by ``properly'' reflecting a given proper paving unit $\Om$ a finite number of times (see an illustration in 2D with Figure \ref{fig:reflecok}), $\mathscr{S}_{\mrm{NS}}$ contains an unbounded sequence. Indeed, any eigenfunction of the Dirichlet/Neumann Laplacian in $\Om$ extended to \Blu{an open neighbourhood of $\omega$} according to the process described above is clearly an eigenfunction of the Dirichlet/Neumann Laplacian in $\om$.

\begin{figure}[!ht]
	\centering
	\begin{subfigure}{.45\textwidth}
		\centering
		\includegraphics[scale=.45]{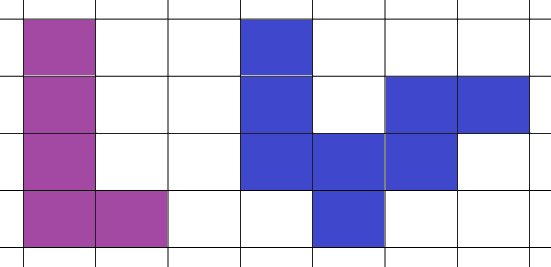}
	\end{subfigure}
	\begin{subfigure}{.5\textwidth}
		\centering
		\includegraphics[scale=.5]{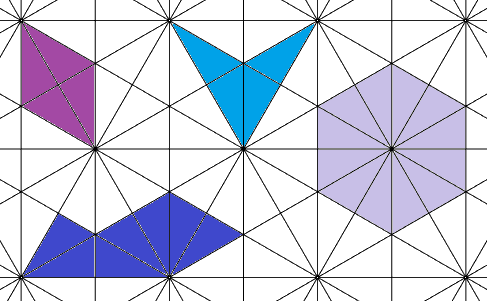}
	\end{subfigure}
	\caption{Examples of domains for which $\mathscr{S}_{\mrm{NS}}$ contains an unbounded sequence.\label{fig:reflecok}}
\end{figure}

\subsection{Combining solutions to the homogeneous Helmholtz equation}

In the previous subsections, we presented canonical examples of geometries $\Om$ where eigenfunctions of the Dirichlet/Neumann Laplacians can be extended as smooth solutions of the homogeneous Helmholtz equation in a neighbourhood of $\Om$. Here, by working directly with solutions to the homogeneous Helmholtz equation, we provide examples of more general domains such that non-scattering frequencies exist. To keep notations simple, we work in $\RR^2$ only. The ideas can be applied to generate examples in $\RR^3$.

\subsubsection{Extendable Dirichlet eigenfunctions}\label{DirichletEigenVec}

Consider the function $v_0$ such that
\[
v_0(x,y)=\sin x\sin y.
\]
It is a particular solution of the equation 
\begin{equation}\label{HHR2}
\Delta v+2v=0\quad\mbox{ in }\RR^2. 
\end{equation}
Now let us, roughly speaking, rotate this function by an angle $\alpha\in(0,2\pi)$ by defining $v_\alpha$ such that 
\[
v_\alpha(x,y)=v_0(\cos(\alpha) x-\sin(\alpha) y,\sin(\alpha) x+\cos(\alpha) y).
\]
Note that $v_\alpha$ also solves (\ref{HHR2}). Then for $\lambda\in\RR$, set
\[
v^\lambda_\alpha=v_0+\lambda\,v_\alpha
\]
and define the nodal set $\mathscr{N}^\lambda_\alpha:=\{(x,y)\in\RR^2\,|\,v^\lambda_\alpha(x,y)=0\}$. As classical in literature (see e.g. \cite{CoHi09}), we call nodal domains of $v^\lambda_\alpha$ the maximally connected subsets of $\RR^2$ for which $v^\lambda_\alpha$ does not change sign. From Proposition \ref{EquivDefNS} and Lemma \ref{lem:DN}, we obtain the following statement. 
\begin{proposition}
Any bounded nodal domain of $v^\lambda_\alpha$ is a domain where $2\in\mathscr{S}_{\mrm{NS}}$. 
\end{proposition}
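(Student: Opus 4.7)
The plan is to show that the restriction of $v^\lambda_\alpha$ to any bounded nodal domain $\Omega$ is itself a Dirichlet eigenfunction of $-\Delta$ on $\Omega$ with eigenvalue $2$, whose global extension to a neighborhood of $\overline{\Omega}$ is given by $v^\lambda_\alpha$ itself — an entire solution of the Helmholtz equation. Combined with Lemma \ref{lem:DN} and Proposition \ref{EquivDefNS}, this yields the claim immediately.

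The first step is to verify that $v^\lambda_\alpha$ satisfies $\Delta w + 2w = 0$ in all of $\RR^2$. This follows because $v_0(x,y) = \sin x \sin y$ solves (\ref{HHR2}) by direct computation, rotations preserve the Laplacian so $v_\alpha$ also solves (\ref{HHR2}), and linearity gives the same for $v^\lambda_\alpha = v_0 + \lambda v_\alpha$.

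The second step is to exploit the definition of nodal domain. Let $\Omega$ be any bounded connected component of $\{v^\lambda_\alpha \neq 0\}$. By maximality, $v^\lambda_\alpha$ does not vanish in $\Omega$ and has a constant sign there, while $\partial\Omega \subset \mathscr{N}^\lambda_\alpha$. Thus $v^\lambda_\alpha \vert_{\Omega}$ is a non-trivial $\mH^1(\Omega)$ eigenfunction of the Dirichlet Laplacian with eigenvalue $2$, so that $k=\sqrt{2} \in \mathscr{S}_\mrm{D}$. Lemma \ref{lem:DN} then gives $k \in \mathscr{S}_\mrm{TE}$, with associated interior transmission eigenpair $(u,v) = (a\,v^\lambda_\alpha\vert_\Omega,\,v^\lambda_\alpha\vert_\Omega)$ of (\ref{eq:ITEP}). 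Finally, Proposition \ref{EquivDefNS} requires that $v$ extend to some neighborhood $\tilde\Omega$ of $\overline{\Omega}$ as a solution of (\ref{DefHlmEqn}); taking $\tilde\Omega = \RR^2$ and $\tilde v = v^\lambda_\alpha$, the first step supplies exactly this, so $k \in \mathscr{S}_\mrm{NS}$.

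The only mildly subtle point — and the one I would check with care — is that $\Omega$ qualifies as an admissible inclusion in the setting of (\ref{AssumCoef})--(\ref{eq:MainGov1}), i.e.\ bounded with Lipschitz boundary and connected complement. Boundedness is assumed. Lipschitz regularity follows from real-analyticity of $v^\lambda_\alpha$: the nodal set is, locally, either a smooth analytic arc (at non-critical points of $v^\lambda_\alpha$) or a finite union of analytic arcs meeting transversally at an isolated critical point, which yields a corner and hence still a Lipschitz boundary. Connectedness of $\RR^2\setminus\overline{\Omega}$ can be ensured by picking $(\lambda,\alpha)$ generically, or is otherwise part of the implicit selection of $\Omega$. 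With these admissibility points in hand, the argument above concludes the proof.
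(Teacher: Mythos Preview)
Your proof is correct and follows exactly the approach the paper intends: the paper states the proposition as an immediate consequence of Proposition~\ref{EquivDefNS} and Lemma~\ref{lem:DN}, and you have simply spelled out that deduction in detail. Your additional discussion of admissibility (Lipschitz boundary, connected complement) is not treated in the paper at all, so it goes beyond what is required; your justification of Lipschitz regularity via the local structure of nodal sets of real-analytic Helmholtz solutions is in fact consistent with the argument the paper gives later in Proposition~\ref{rem:Dirdomains}.
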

\noindent Let us make a few comments concerning this approach. First the choice $k^2=2$ is arbitrary here and we could consider any other $k^2>0$. On the other hand, we combined only two particular solutions of (\ref{HHR2}). We could have worked similarly with any other linear combinations of functions satisfying (\ref{HHR2}). To exhibit different solutions of (\ref{HHR2}), we can proceed to rotations of $v_0$ as we did. We can also translate $v_0$ or its rotated versions. This a priori offers a large variety of functions. A natural question then is \textit{``how rich is the family of corresponding bounded nodal domains?''}. Finally, observe that we prove only the existence of one element in  $\mathscr{S}_{\mrm{NS}}$ and not that $\mathscr{S}_{\mrm{NS}}$ contains an unbounded sequence as in the statements of the previous subsections.\\
\newline
In Figure \ref{fig:NodalLine}, we display the nodal sets $\mathscr{N}^\lambda_\alpha$ for $\alpha=\pi/4$ and $\lambda\in\{-1,-0.2,0.5,2\}$. In each situation we observe that there exist bounded nodal domains. The question of proving the existence of bounded nodal domains would deserve to be studied in more details.\\
\newline
\begin{figure}[!ht]
	\centering
\includegraphics[width=.23\textwidth]{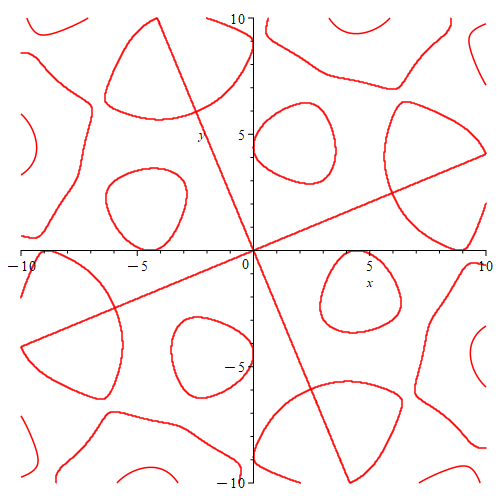}\quad \includegraphics[width=.23\textwidth]{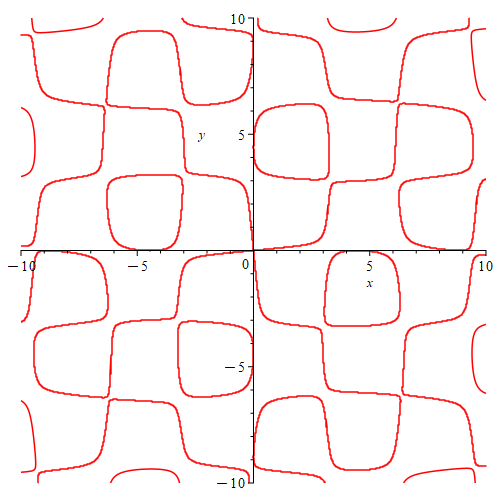}\quad \includegraphics[width=.23\textwidth]{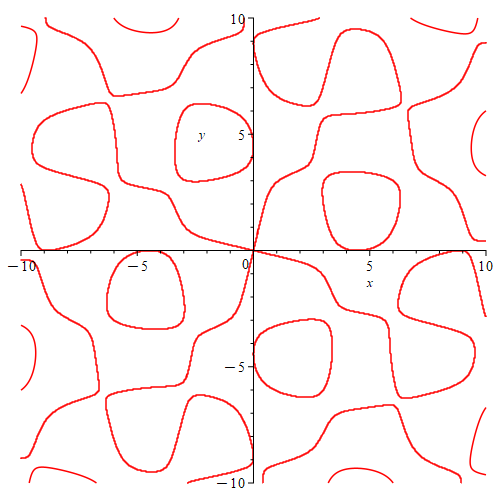}\quad \includegraphics[width=.23\textwidth]{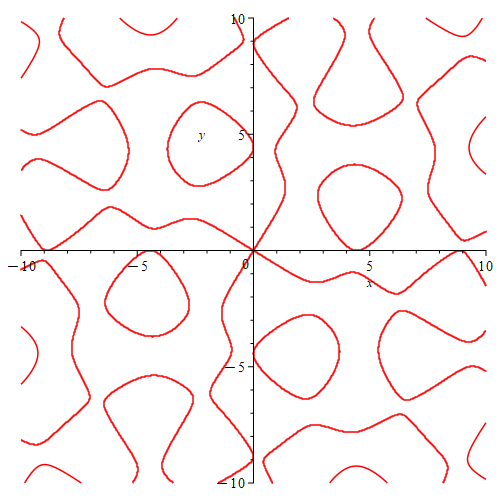}
\caption{Nodal sets $\mathscr{N}^\lambda_\alpha$ for $\alpha=\pi/4$ and, from left to right, $\lambda=-1,-0.2,0.5,2$.\label{fig:NodalLine}}
\end{figure}

\noindent Interestingly, this technique can be exploited to exhibit domains $\Om$ such that eigenfunctions of the Dirichlet Laplacian in $\Om$ can be extended only to an open neighbourhood of $\overline{\Omega}$ but not to the whole $\RR^2$. This has been found by J. Eckmann and C. Pillet in \cite{EckmannP95} and we reproduce their work here. \\
\newline
Given $L\in\mathbb{N}^\ast$ and $\mu\in\RR_+:=(0,+\infty)$, we define the function $v$ such that
	\begin{equation}\label{eq:fR}
		v(r, \theta) =\sum_{l=0}^{L-1}J_{\mu}(k\rho_l)\cos(\mu\psi_l).
	\end{equation}
Here, \Blu{$(r,\theta)$ stand for the polar coordinates} with $r>0$ and \Blu{$\theta\in(-\pi,\pi]$\footnote{\Blu{It is important here that we fix the domain of $\theta$ to be a specific $2\pi$-length interval. For example, under this setup, the singularity of the function $f(\x)=r^{1/2}\sin(\theta/2)$ is on the half axis $\{\x=(x,y)\in\RR^2;x\le 0\}$.}},} $J_{\mu}$ is again the Bessel function of the first kind with order $\mu$, \Blu{the ``translated and rotated polar coordinates'' $\rho_l>0$ and $\psi_l\in(-\pi,\pi]$ are defined via
	\[
	\rho_l\cos\psi_l=a+r\cos(\theta+2\pi l/L),\quad
	\rho_l\sin\psi_l=r\sin(\theta+2\pi l/L),\qquad l=0,\ldots,L-1,
	\]
	with some fixed $a\in\RR$.}
In particular, the term for $l=0$ in \eqref{eq:fR} corresponds to the spherical wave function $J_{\mu}(kr)\cos(\mu\theta)$ with the center translated from $(0,0)$ to $(-a,0)$. If we further rotate the initial spherical wave function $J_{\mu}(kr)\cos(\mu\theta)$ clockwise by $2\pi/L$ (resp.  $2\pi l/L$), we obtain the term in \eqref{eq:fR} for $l=1$ (resp.  $l$ in general).
\\More generally, we can consider functions $v$ of the form
\begin{equation}\label{eq:fRgen}
	v(r, \theta) =\sum_{l=0}^{L-1}b_lJ_{\mu_l}(k\rho_l)\cos\Pare{\mu_l(\psi_l-\phi_l)},
\end{equation}
with different Bessel orders $\mu_l$, rotational parameters $\phi_l$, and weights $b_l$, for each $l=0,\ldots,L-1$.
We can also impose different translational and rotational effects to each term by defining
\[\rho_l\cos\psi_l=a_l+r\cos(\theta+\theta_l)\AND
\rho_l\sin\psi_l=r\sin(\theta+\theta_l)\]
with different $a_l$ and $\theta_l$ for each $l=0,\ldots,L-1$.
\Blu{\\In the rest of this subsection, we choose the value of $k$ to be the first positive zero of $J_{\mu}$.}\\
\newline	
The following example, taken from \cite{EckmannP95}, gives a bounded and simple connected domain $\Omega$ with analytic boundary where the Dirichlet Laplacian has an eigenfunction which is extendable in an open neighbourhood of $\overline{\Omega}$ but not to the whole $\RR^2$.
	\begin{example}\label{ex:Eck3/2}
		Let us consider the function $v$ in \eqref{eq:fR} with $\mu=3/2$, $L=3$, $a=0.6$. 
		\Blu{Notice that the singularity of the function $v_0$ defined by $v_0(\x)=J_{3/2}(kr)\cos(3\theta/2)$ is located at $\theta=\pi$, since the derivative of $v_0$ with respect to $\theta$ (or $y$) has different limiting value when $\theta\to\pm \pi$. Hence the function $v$ defined in \eqref{eq:fRgen} is singular in $\RR^2$ but real-analytic in $\RR^2\setminus\Sigma$ with $\Sigma:=\{(r\cos\theta,r\sin\theta)\,|\,r\in[a,\infty),\, \theta =-\pi+2l\pi /L,\, l=0,\ldots,2\}$. In particular, $v$ is real-analytic in the open ball $B_{a}$ centred at the origin and of radius $a$. In the following, we shall show that there is a bounded and simple-connected domain $\Omega\subset B_a$ with real-analytic boundary such that $v$ is real-analytic in (an open neighbourhood of) $\Omega$ and $v=0$ on $\partial \Omega$.\\[5pt]} 
First notice that $(x,y)\mapsto J_{3/2}(kr)\cos(3\theta/2)$ is positive in $D_1=\{(r\cos\theta,r\sin\theta)\,|\,r\in(0,1), \theta\in(-\pi/3,\pi/3)\}$ and negative in $B_1\setminus\overline{D_1}$ (see Figure \ref{fig:a}). Using this property, we display in Figure \ref{fig:b} a yellow region where the three terms in the sum defining $v$ are all positive and a blue region where these three terms are all negative. We note by $\tilde{\Om}$ the green region in which the sign of $v$ is a priori not clear. Looking at every direction $\theta\in(-\pi,\pi]$, with the intermediate value theorem, we can show that there is $(x_\theta,y_\theta)$ in $\tilde{\Om}$ such that $v(x_\theta,y_\theta)=0$. Moreover, by exploiting that $v$ is real-analytic in $\tilde{\Om}$, we infer that there is a closed analytic curve $\Gamma\subset\tilde{\Om}$ such that $v=0$ on $\Gamma$ (see Figure \ref{fig:EPzero} for a plot of $\Gamma$). Denote by $\Om$ the bounded open set surrounded by $\Gamma$.\\[5pt]
Then $v$ is an eigenfunction of the Dirichlet Laplacian in $\Om$ which can be analytically extended to $\tilde{\Om}$ (actually to $\RR^2\setminus\Sigma$) as a solution to the Helmholtz equation. Because of this property, we conclude that we have $\mathscr{S}_{\mrm{NS}}\neq\emptyset$ for this $\Om$. Let us emphasize however that $v$ can not be extended in the whole $\RR^2$ analytically because its derivative with respect to $\theta$ is not continuous on $\Sigma$.
			\begin{figure}[!ht]
				\centering
				\begin{subfigure}{.29\textwidth}
					\centering
					\includegraphics[width=\textwidth]{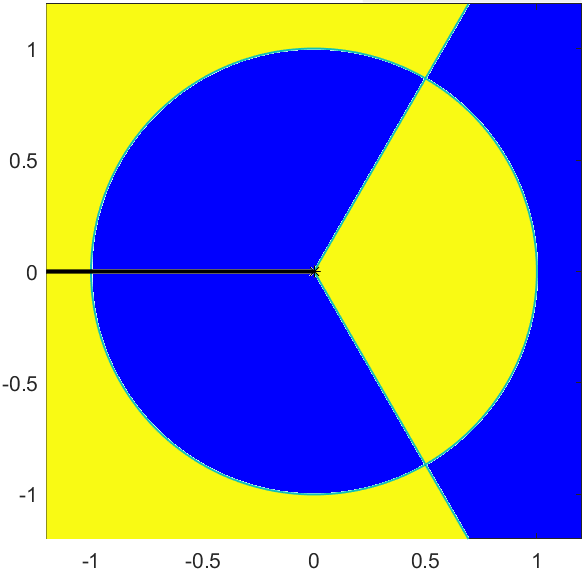}
					\caption{Properties of $(x,y)\mapsto J_{3/2}(kr)\cos(3\theta/2)$: positive in the yellow regions and negative in the blue regions.}\label{fig:a}
				\end{subfigure}
			\hfil
				\begin{subfigure}{.29\textwidth}
					\centering
						\includegraphics[width=\textwidth]{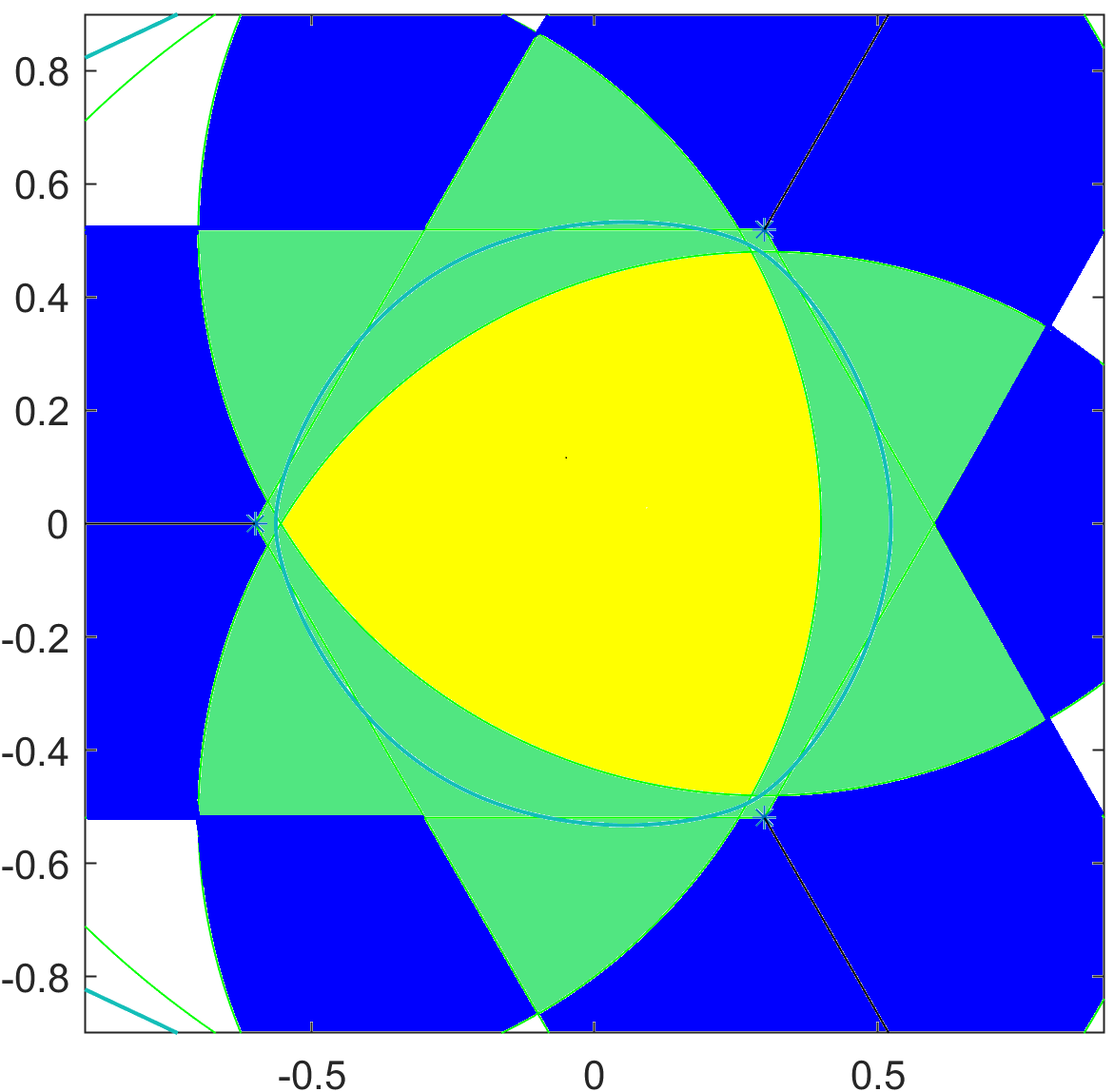}
					\caption{Properties of $v$: in the yellow (resp. blue) region, the three terms in the sum defining $v$ are positive (resp. negative).}
					\label{fig:b}
				\end{subfigure}
			\hfil
			\begin{subfigure}{.33\textwidth}
				\centering
				\includegraphics[width=\textwidth]{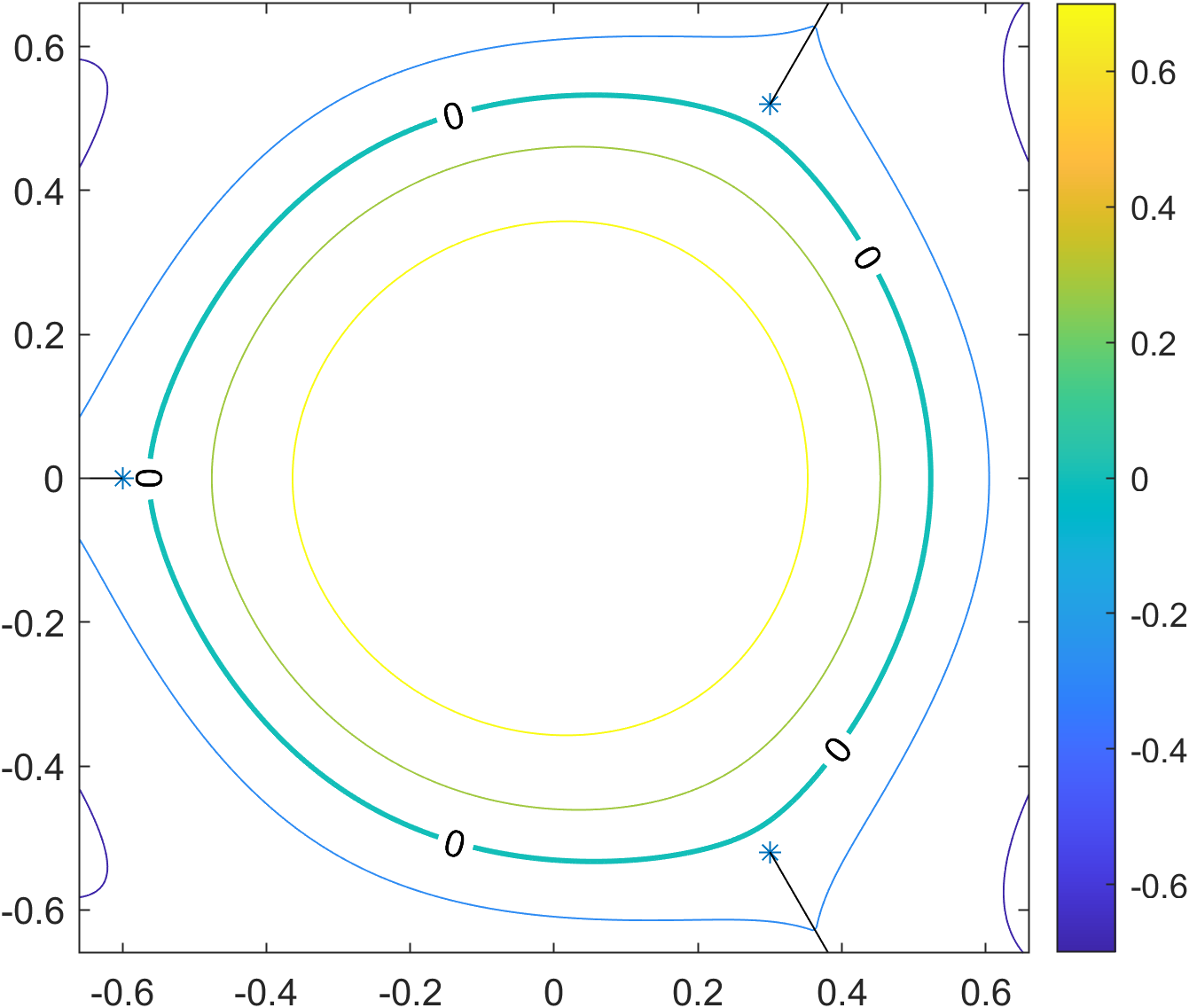}
				\caption{Curves of constant value of the function $v$.\\\phantom{l}\\\phantom{l}}
				\label{fig:EPzero}
			\end{subfigure}
			\caption{Illustration of the properties of the functions involved in Example \ref{ex:Eck3/2}. \label{fig:EPsign}}
			\end{figure}
\end{example}

\noindent We can modify Example \ref{ex:Eck3/2} by taking different parameters in the definition of the function $v$ appearing in  \eqref{eq:fR}. This allows us to exhibit other analytic domains $\Om$ where the Dirichlet Laplacian has eigenfunctions which are extendable only in a neighbourhood of $\Om$. In Figure \ref{fig:EPmore}, we display certain of these domains. 

	\begin{figure}[!ht]
		\begin{subfigure}{.3\textwidth}
			\centering
			\includegraphics[width=\textwidth]{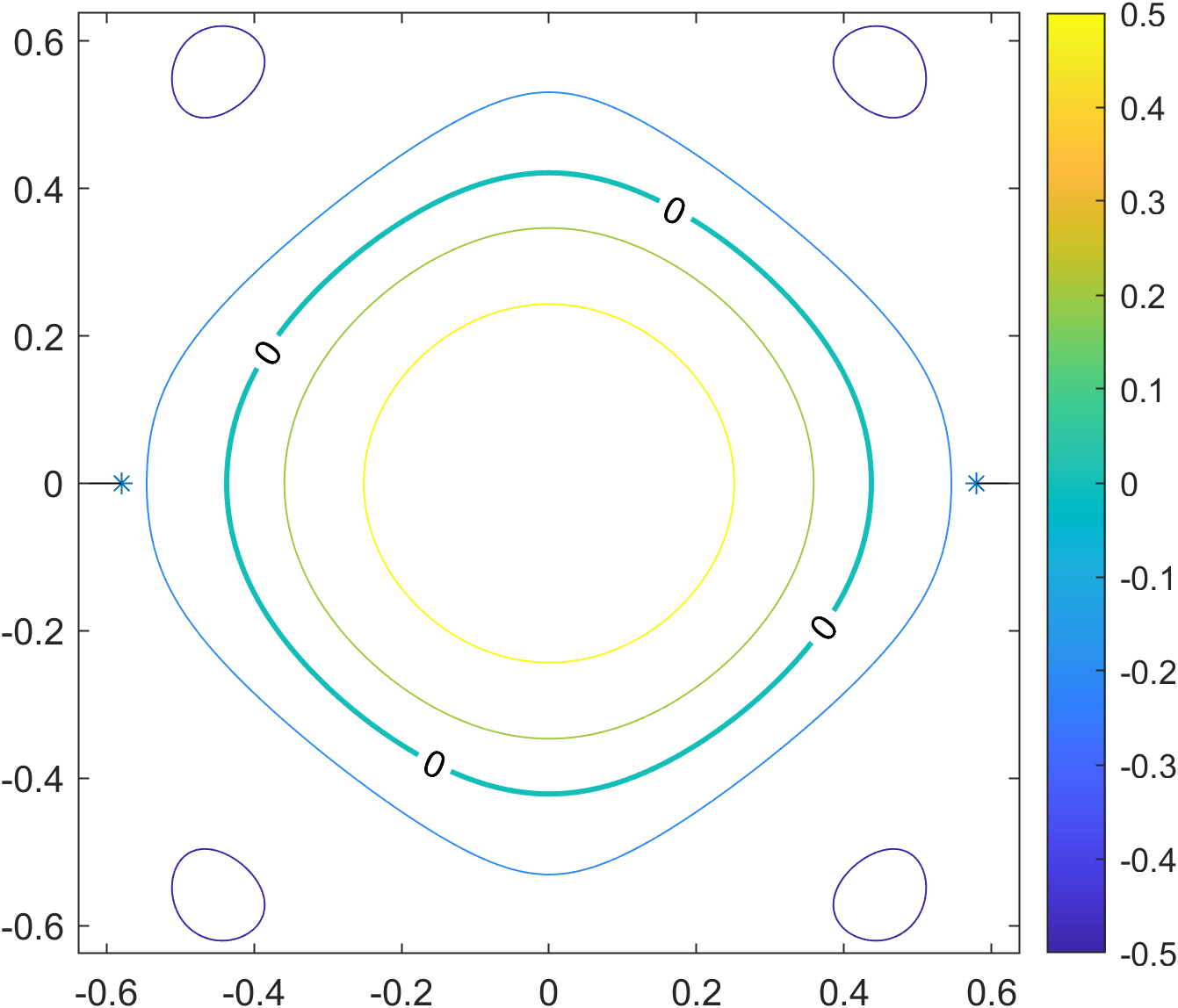}
			\caption{$L=2$, $\mu=5/2$, $a=0.58$.}
\label{fig:AnaLoc1}
		\end{subfigure}\hfil
	\begin{subfigure}{.3\textwidth}
		\centering
		\includegraphics[width=\textwidth]{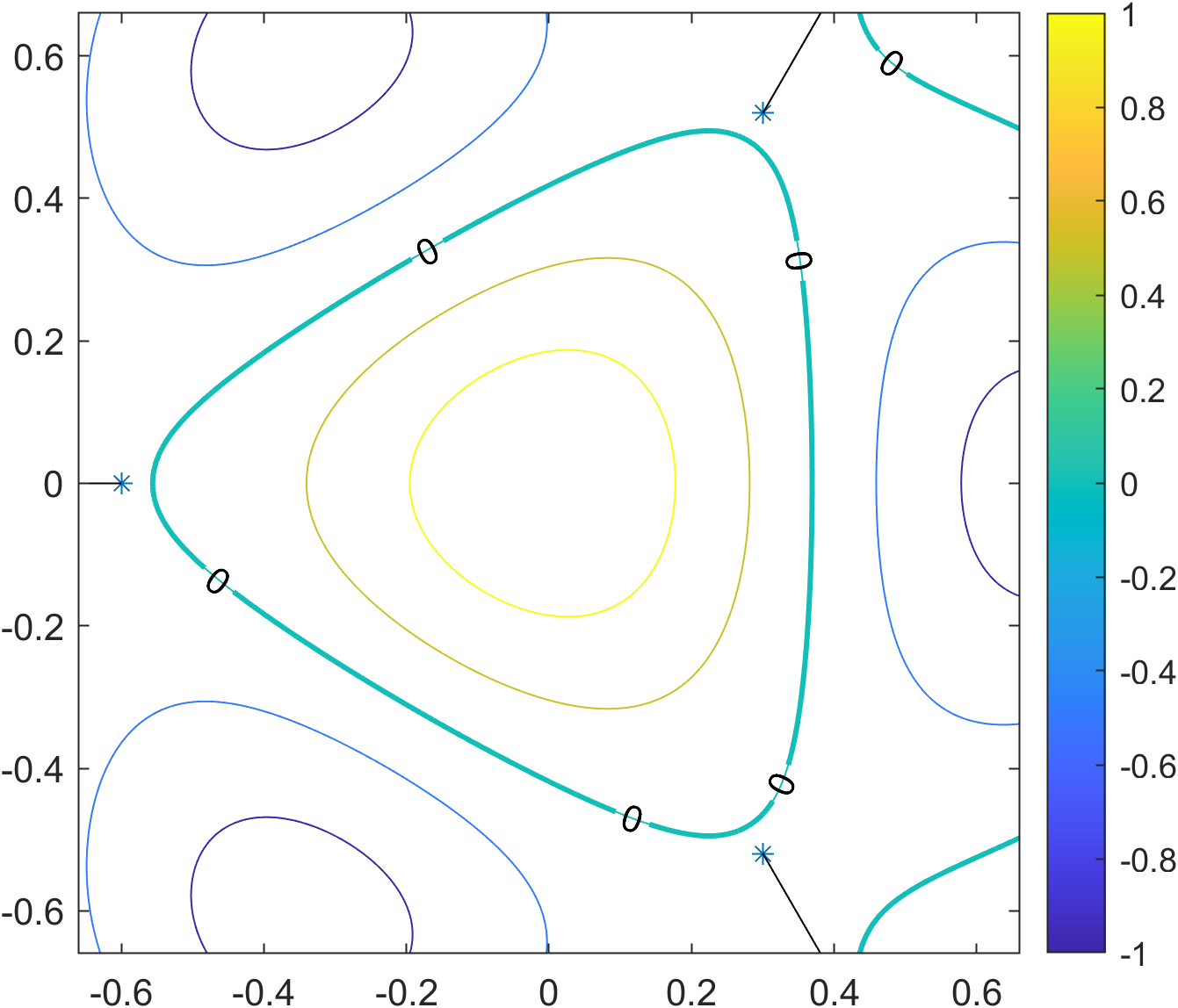}
		\caption{$L=3$, $\mu=5/2$, $a=0.6$.}
		\label{fig:AnaLoc2}
	\end{subfigure}\hfil
		\begin{subfigure}{.3\textwidth}
			\centering
			\includegraphics[width=\textwidth]{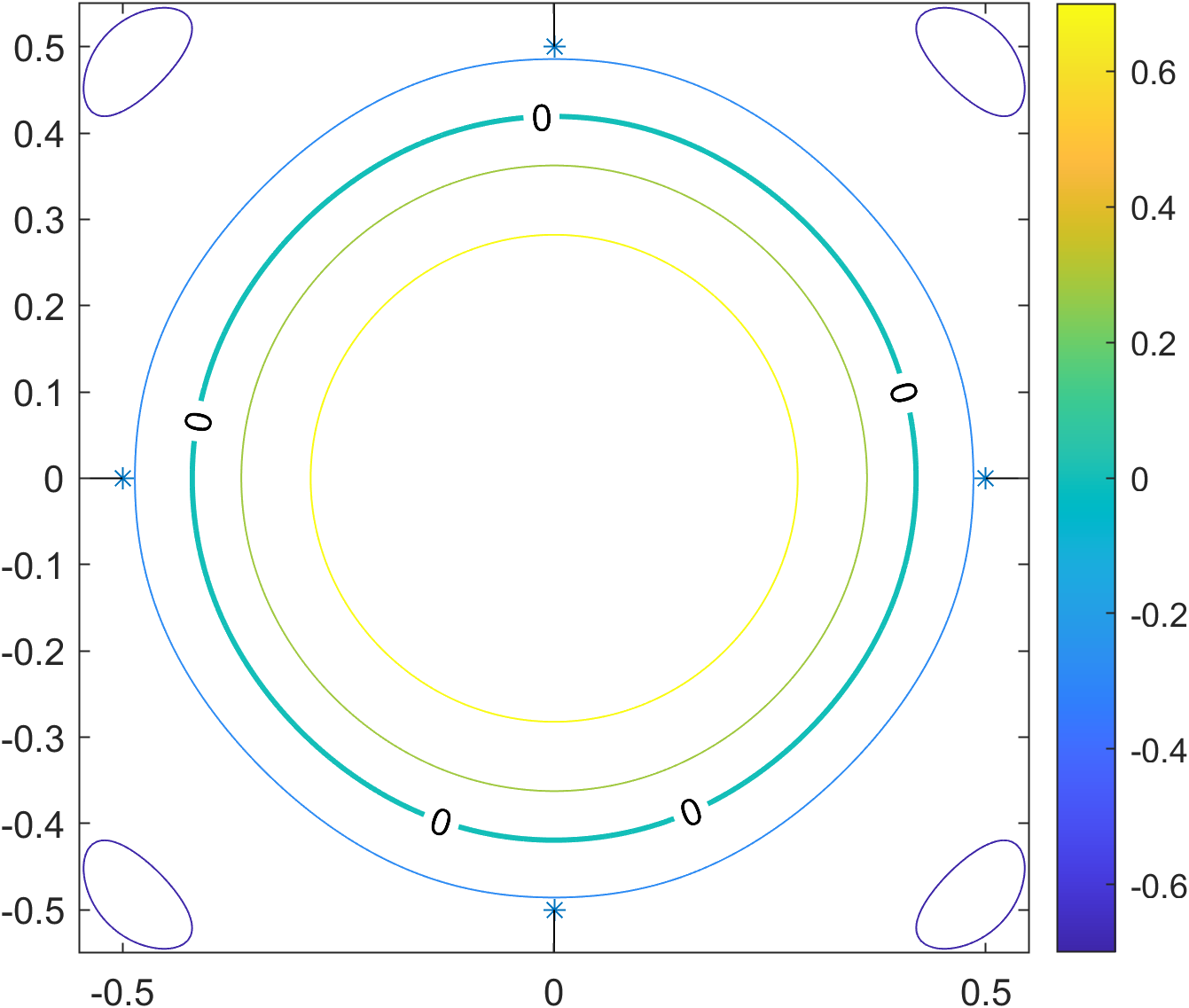}
			\caption{$L=4$, $\mu=5/2$, $a=0.5$}
\label{fig:AnaLoc3}
		\end{subfigure}
	\caption{Curves of constant value of the function $v$ for different parameters in (\ref{eq:fR}). In the analytic domains enclosed by the curve $v=0$, the Dirichlet Laplacian has eigenfunctions which are only locally extendable.}
	\label{fig:EPmore}
	\end{figure}~
	
\noindent Note that if we take the Bessel order $\mu$ in \eqref{eq:fR} in $\NN^\ast$, we can find domains $\Omega$ for which the eigenfunctions of the Dirichlet Laplacian can be extended in the whole $\RR^2$. We present such examples in Figure \ref{fig:EPint}. Let us emphasize that the domains $\Omega$ may have analytic boundaries as in Figure \ref{fig:EPintSmooth}, or Lipschitz boundaries as in Figures \ref{fig:EPintLip4}--\ref{fig:EPintLip5}. The latter situation appears when two or more nodal curves of the function $v$ intersect. For example, in Figure~\ref{fig:EPintLip4}, both $v$ and $\nabla v$ vanishes at the two points $\pm(a+1,0)$, but $\mathcal{H} v$,  the Hessian of $v$, is different from zero at $\pm(a+1,0)$. Hence there are two nodal curves intersecting at $\pm(a+1,0)$.
In Figure~\ref{fig:EPintLip5}, at $(0,-\sqrt{2}/2)$ we have 
$\partial^\alpha v=0$ for all multi-indices $\alpha\in\mathbb{N}^2$ with $0\le |\alpha|\le 2$ but $\partial_{x}^3v\neq 0$. This explains why there are three nodal curves  passing through $(0,-\sqrt{2}/2)$.
\begin{figure}[!ht]
	\begin{subfigure}{.3\textwidth}
		\centering
		\includegraphics[width=\textwidth]{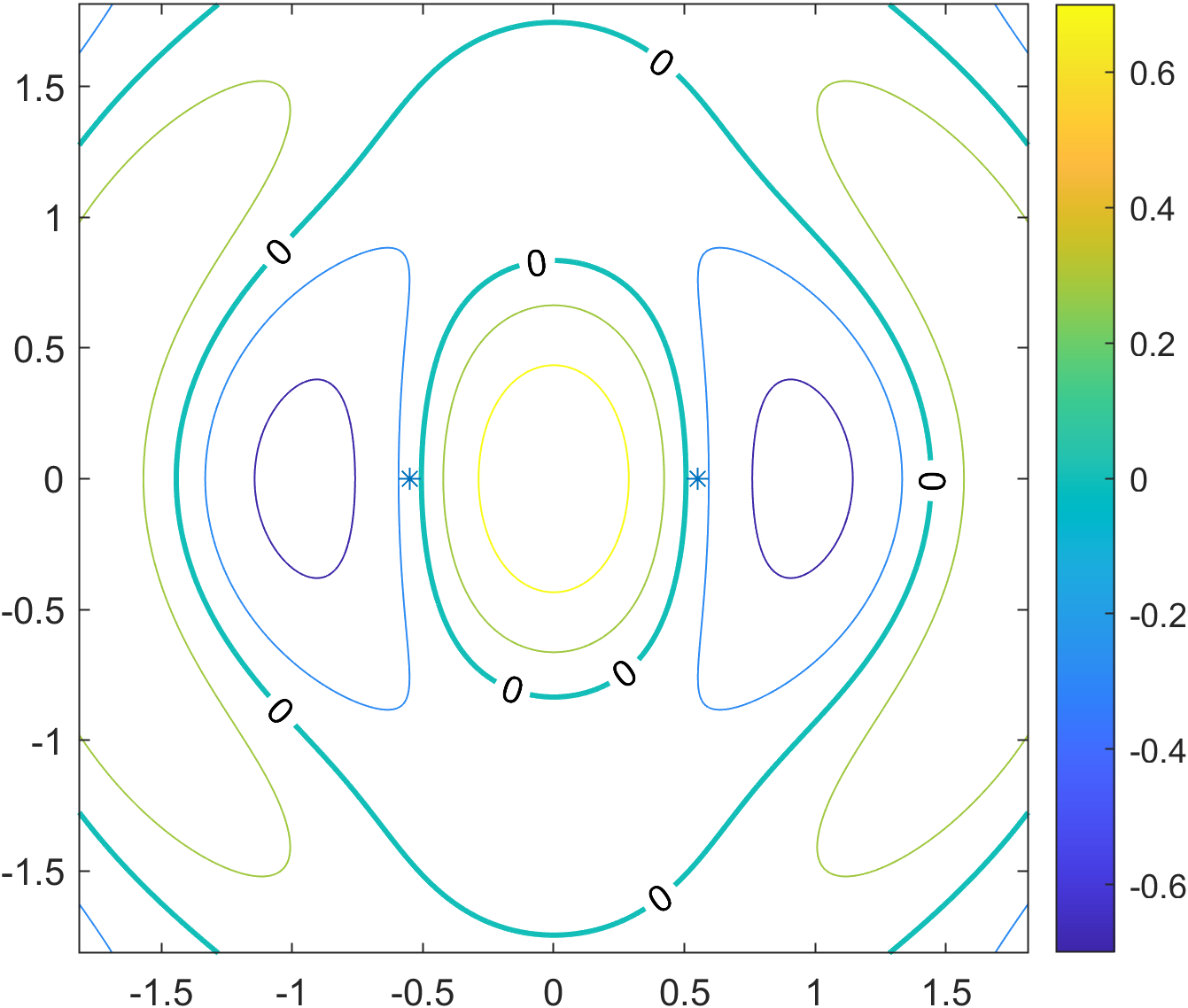}
		\caption{$L=2$, $\mu=1$, $a=0.55$.\\\phantom{$\sqrt{2}$}}
		\label{fig:EPintSmooth}
	\end{subfigure}
	\hfil
	\begin{subfigure}{.3\textwidth}
		\centering
		\includegraphics[width=\textwidth]{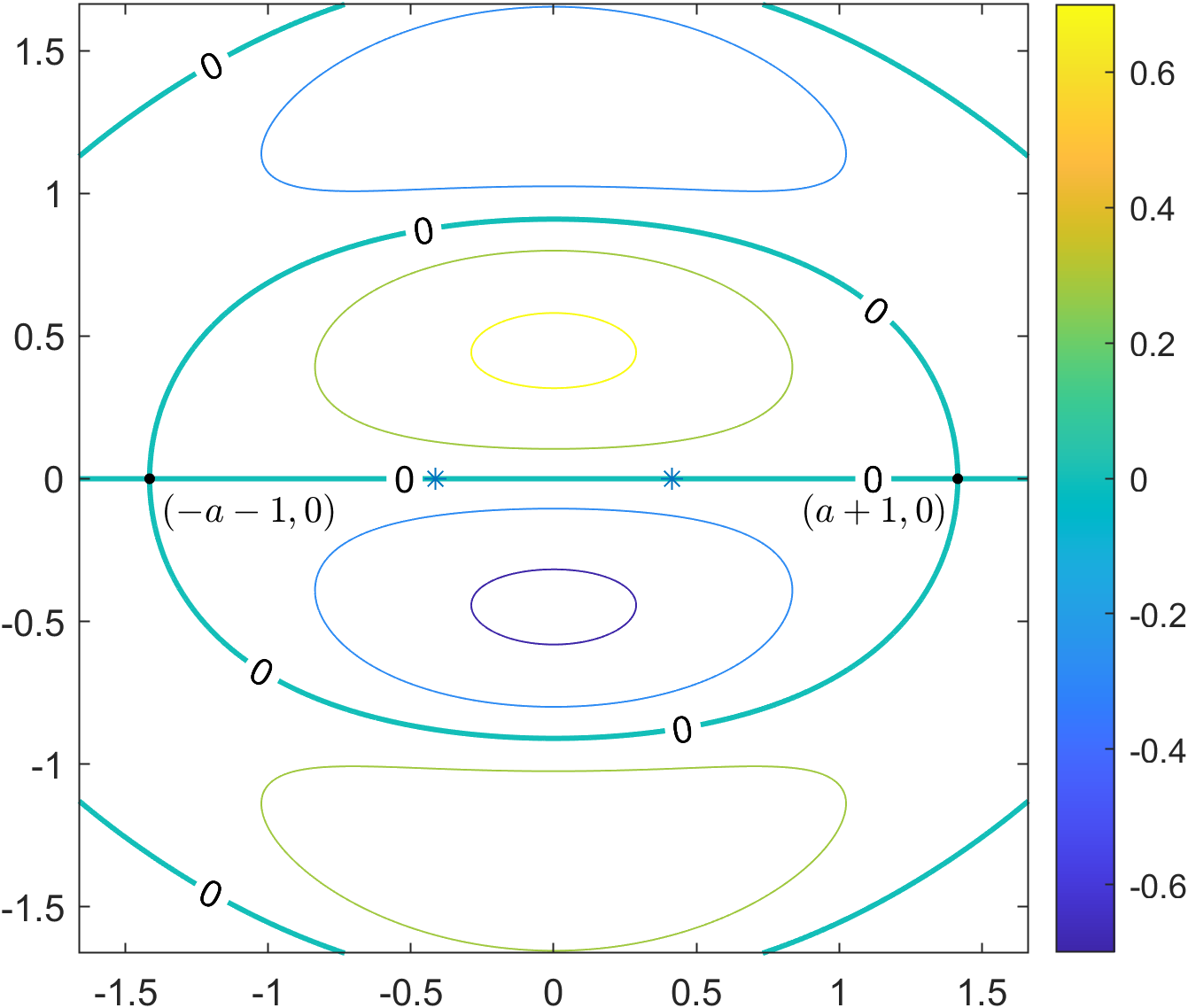}
		\caption{$L=2$, $\mu=1$, \\$a=\frac{{k_2}-{k}}{2k}$\,\footnotemark, $\phi_0=\pi/2=-\phi_1$.}
		\label{fig:EPintLip4}
	\end{subfigure}\hfil
	\begin{subfigure}{.3\textwidth}
		\centering
		\includegraphics[width=\textwidth]{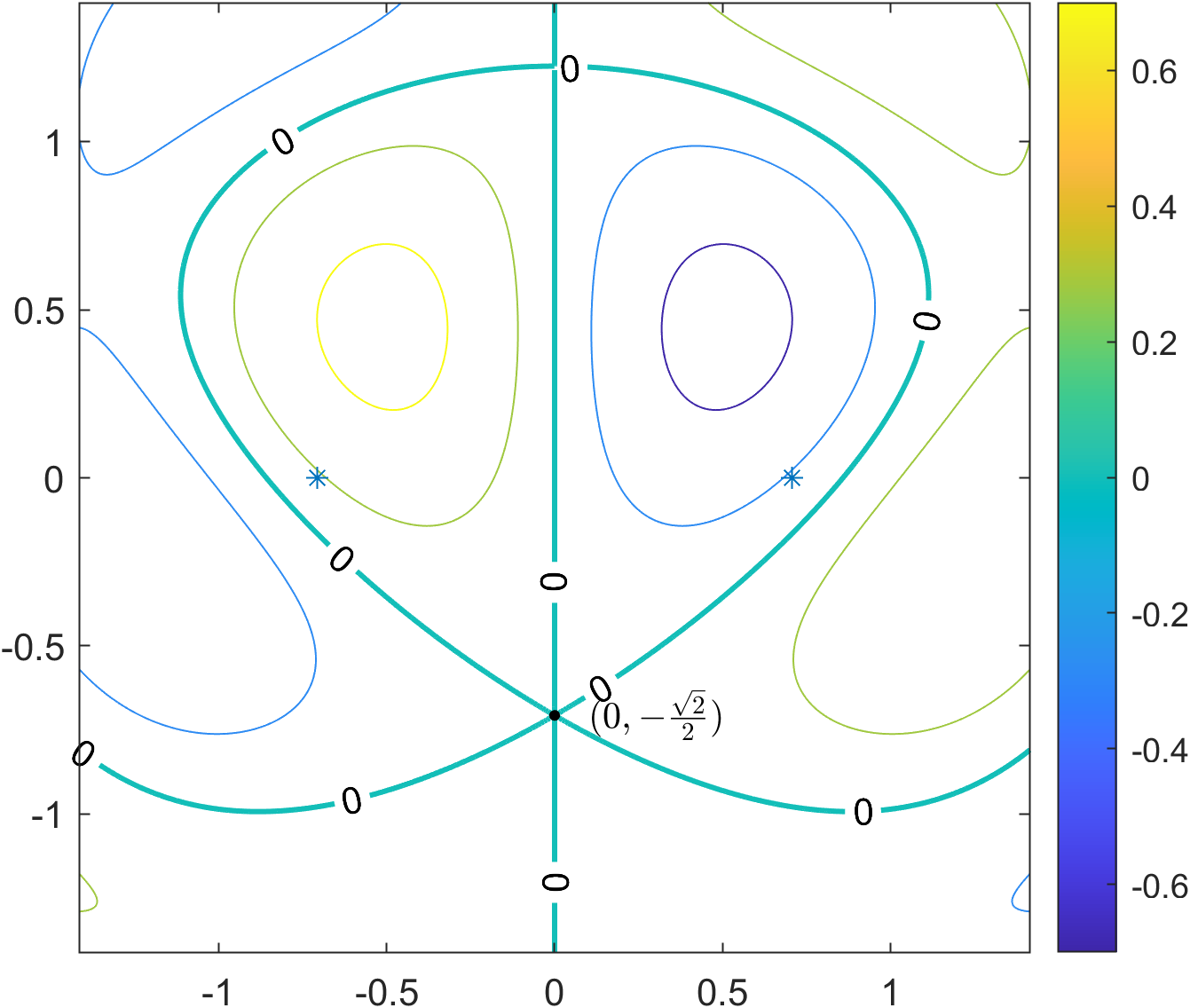}
		\caption{$L=2$, $\mu=1$, \\$a=\sqrt{2}/2$, $\phi_0=\pi/4$, $\phi_1=3\pi/4$.}
		\label{fig:EPintLip5}
	\end{subfigure}
	\caption{Curves of constant value of the function $v$ for different parameters in (\ref{eq:fR}). Here we get analytic and Lipschitz domains where the Dirichlet Laplacian has eigenfunctions which extend to solutions of the Helmholtz equation in $\RR^2$.}
	\label{fig:EPint}
\end{figure}
\footnotetext{Here $k_2$ denotes the second positive zero of $J_{\mu}$.}~

\noindent Similarly, we can find Lipschitz domains $\Omega$ for which there exist eigenfunctions are the Dirichlet Laplacian which can be extended in a neighbourhood of $\overline{\Omega}$ but not to the whole $\RR^2$ (see Figure \ref{fig:EPLip0}). We refer the reader to the Appendix for the justification of existence of such nodal curves as shown in Figures \ref{fig:EPmore}--\ref{fig:EPLip0}.
\begin{figure}[!ht]
	\begin{subfigure}{.3\textwidth}
		\centering
		\includegraphics[width=\textwidth]{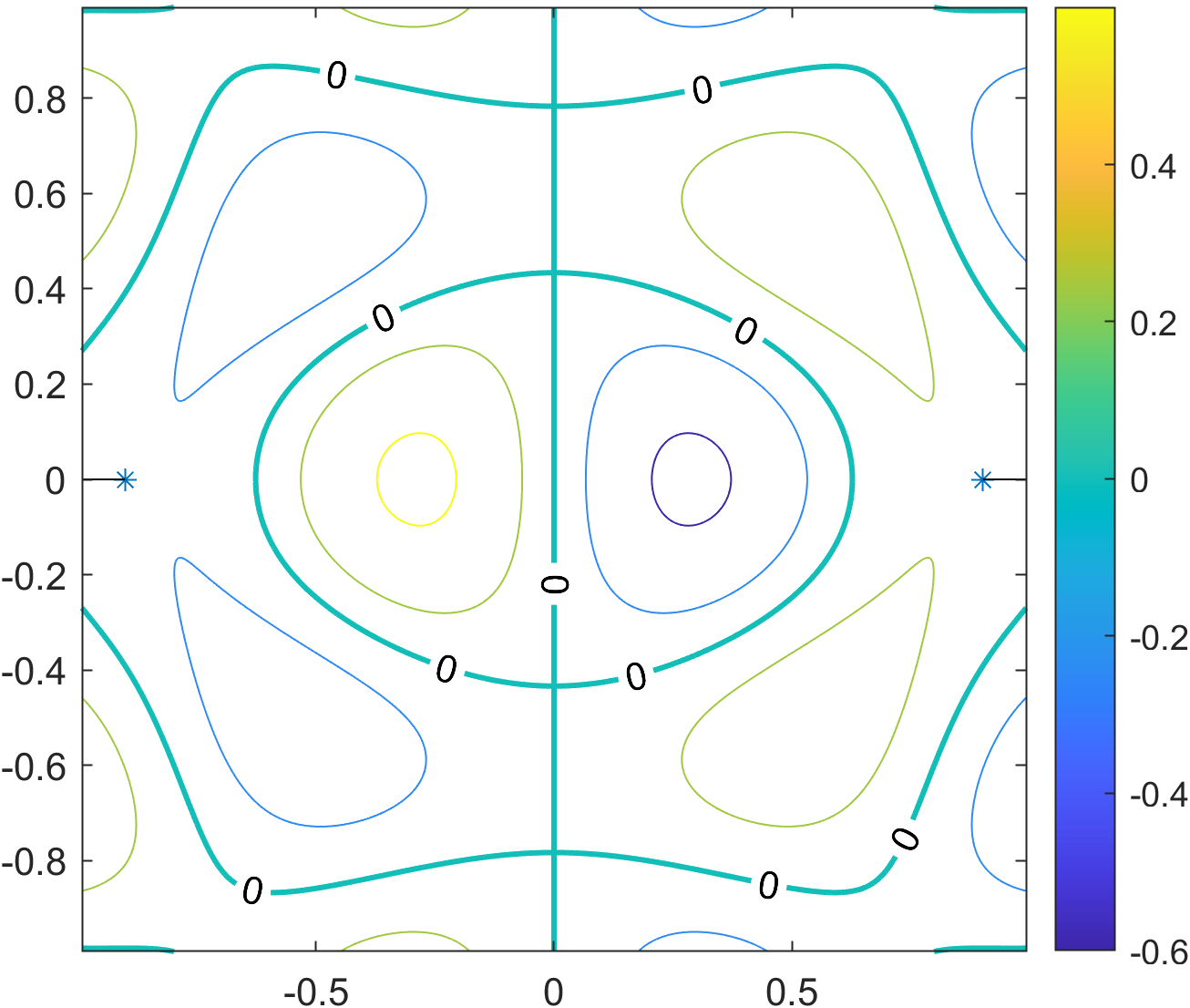}
		\caption{$L=2$, $\mu=7/2$, $a=\cos(\pi/7)$, $b_0=1=-b_1$.}
		\label{fig:EPLip}
	\end{subfigure}\hfil
\begin{subfigure}{.3\textwidth}
\centering
\includegraphics[width=\textwidth]{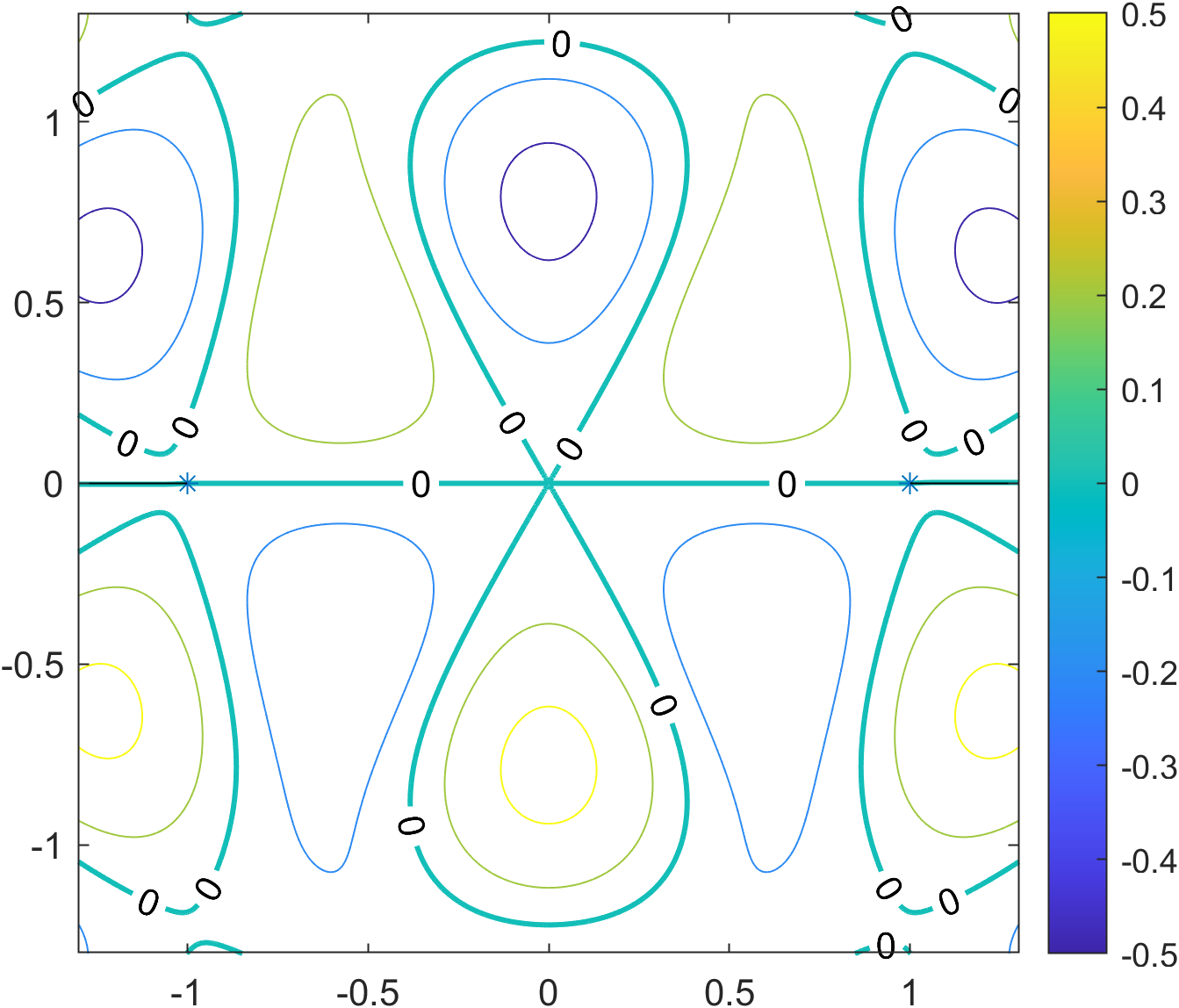}
\caption{$L=2$, $\mu=5/2$, $a=1$, $\phi_0=\pi/5=-\phi_1$.}
\label{fig:EPLip2}
\end{subfigure}\hfil
\begin{subfigure}{.3\textwidth}
	\centering
	\includegraphics[width=\textwidth]{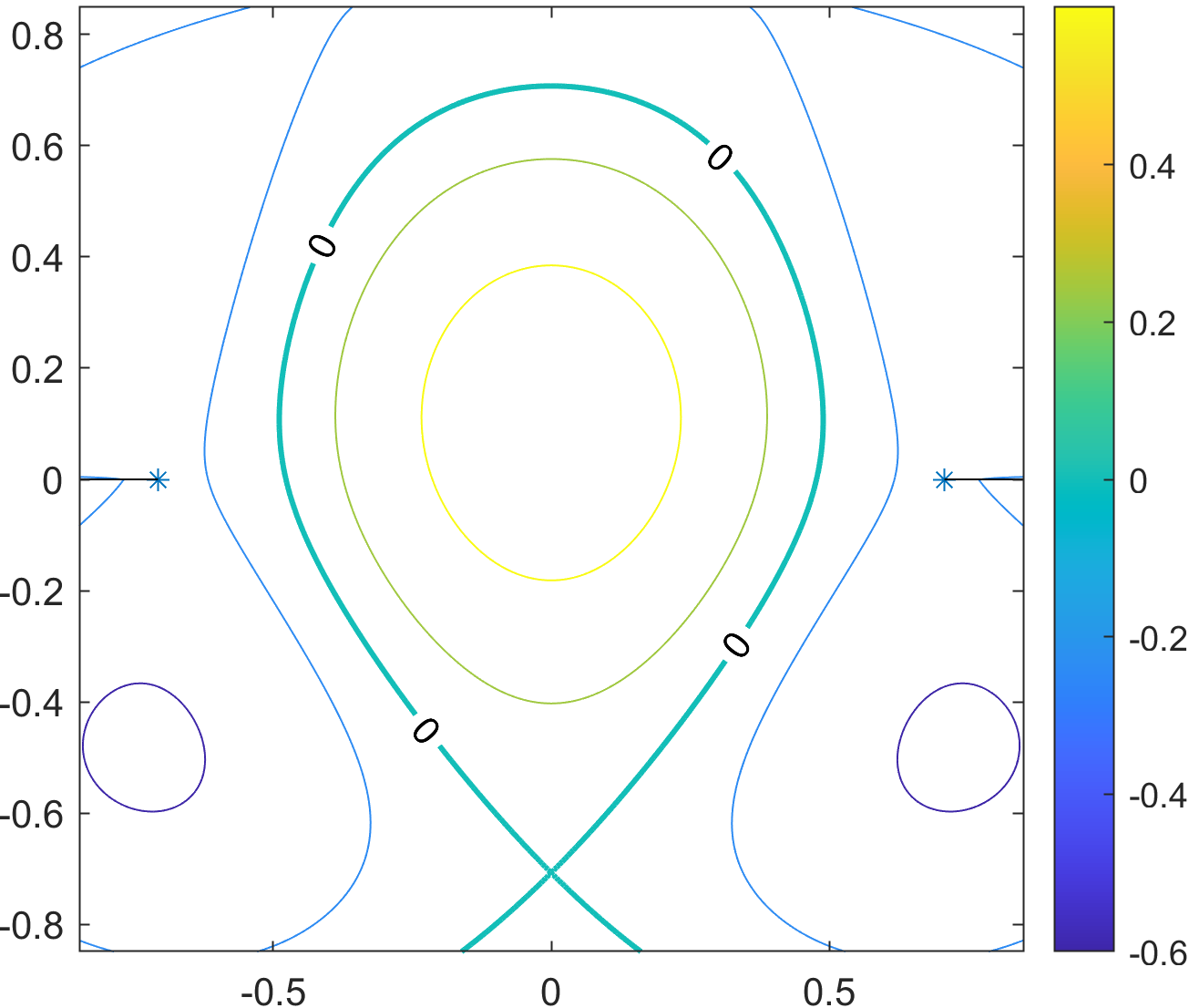}
	\caption{$L=2$, $\mu=3/2$, $a=\sqrt{2}/2$, $\phi_0=\pi/{12}=-\phi_1$.}
	\label{fig:EPLip3}
\end{subfigure}
\caption{Curves of constant value of the function $v$ for different parameters in (\ref{eq:fR}). Here we get Lipschitz domains where the Dirichlet Laplacian has eigenfunctions which are only locally extendable.}
\label{fig:EPLip0}
\end{figure}

\noindent We conclude this paragraph with an important result which generalizes the constraint we have in Proposition \ref{Propo2} concerning the angle of a sector so that non-scattering frequencies can exist. 
\begin{equation}\label{DefDomD}
\begin{array}{|l}
\mbox{Let $\Om$ be a domain such that the Dirichlet Laplacian has an eigenfunction $v$ that}\\
\mbox{can be extended into a function $\tilde{v}$ satisfying $\Delta \tilde{v}+k^2\tilde{v}=0$ in a neighbourhood of $\overline{\Om}$.}
\end{array}
\end{equation}
First, for $\Om$ as in (\ref{DefDomD}), due to the implicit functions theorem, $\partial\Om$ must be piecewise analytic. Additionally, we have the following statement (see also \cite[Chapter V, \S 16]{CoHi09}).
\begin{proposition}\label{rem:Dirdomains}
If $P\in\partial\Om$ is a corner point of a domain $\Om$ satisfying (\ref{DefDomD}) with opening angle $\alpha$, then necessarily $\alpha\in(0,2\pi)\cap\mathbb{Q}\pi$.
\end{proposition}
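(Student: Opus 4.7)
I work locally near $P$. After a rigid motion I assume $P=0$; by the remark preceding the proposition, $\partial\Om$ is piecewise analytic, so near $0$ it consists of exactly two analytic arcs $\gamma_1,\gamma_2$ emanating from $0$, whose tangent half-lines at $0$ enclose the interior opening angle $\alpha\in(0,2\pi)$. Since $\tilde v$ solves $\Delta \tilde v + k^2\tilde v = 0$ in an open neighborhood $\mathcal U$ of $\overline{\Om}$ and the Helmholtz operator has analytic coefficients, $\tilde v$ is real-analytic in $\mathcal U$. Because $v = 0$ on $\partial\Om$ and $\tilde v$ extends $v$ continuously, we have $\tilde v = 0$ on $\gamma_1\cup\gamma_2$ near $P$.

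By weak unique continuation, $\tilde v\not\equiv 0$ cannot vanish to infinite order at $0$. Hence there is a smallest integer $N\ge 1$ for which the Taylor expansion at $0$ reads $\tilde v = p_N + p_{N+1} + \cdots$ with $p_N$ a nonzero homogeneous polynomial of degree $N$. Substituting into $\Delta\tilde v+k^2\tilde v=0$ and extracting the terms of lowest degree $N-2$ gives $\Delta p_N = 0$, so $p_N$ is harmonic. Every nonzero degree-$N$ homogeneous harmonic polynomial in the plane has the form $p_N(r,\theta) = c\,r^N\cos(N(\theta-\theta_0))$ in polar coordinates centered at $0$, with zero set consisting of exactly $N$ straight lines through the origin with consecutive angle $\pi/N$.

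I then invoke the classical local structure theorem for nodal sets of solutions of planar elliptic equations with analytic coefficients (see \cite[Chap.~V, \S 16]{CoHi09}): in a sufficiently small disk around $0$, the nodal set of $\tilde v$ consists of exactly $N$ real-analytic arcs through $0$, and the tangent lines to these arcs at $0$ coincide with the $N$ nodal lines of $p_N$. Since $\gamma_1$ and $\gamma_2$ are analytic arcs through $0$ lying in $\{\tilde v=0\}$, each must coincide locally with one of these branches, so their tangent half-lines at $0$ are rays of the $2N$-ray bouquet obtained from the nodal lines of $p_N$. Consecutive such rays differ by angle $\pi/N$, hence $\alpha = m\pi/N$ for some integer $m\in\{1,\dots,2N-1\}$, and in particular $\alpha\in\mathbb{Q}\pi$.

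The main obstacle is the local nodal-set structure result invoked in the last paragraph; everything else reduces to Taylor expansion plus unique continuation. If one preferred a self-contained derivation, the planar case can be obtained by expanding $\tilde v$ in polar coordinates at $0$ in the Fourier basis $(\cos(n\theta),\sin(n\theta))$: analyticity forces each angular mode to have a Bessel-type radial profile of the form $J_n(kr)$, and isolating the first non-vanishing mode (of index $N$) gives the same leading term $c\,r^N\cos(N(\theta-\theta_0))+O(r^{N+2})$, which yields the same quantization $\alpha\in(\pi/\mathbb{N}^\ast)\mathbb{N}^\ast$.
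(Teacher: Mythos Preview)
Your argument is correct and follows essentially the same route as the paper: both reduce to identifying the leading homogeneous term at the corner as $c\,r^N\cos(N(\theta-\theta_0))$ (you via harmonicity of the lowest Taylor term, the paper via the Bessel expansion $c_N J_N(kr)\cos(N\theta-\theta_0)$) and then invoking the local nodal-structure result from \cite{CoHi09} to conclude that the two boundary arcs have tangent directions among the equiangular $2N$-ray bouquet. One terminological quibble: the property you invoke is \emph{strong} unique continuation (no infinite-order vanishing), not weak; for real-analytic $\tilde v$ this is immediate from the identity theorem.
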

\begin{proof}
We start by observing that $\partial\Om$ belongs to the nodal set of $v$, an eigenfunction of the Dirichlet Laplacian in $\Om$. In an adapted system of coordinates, we can assume that $P$ coincides with the origin $O$. 
By the assumption that $v$ satisfies the Helmholtz equation in an open ball around $O$, we obtain that $v$ is of the form \begin{equation*}
v(x,y)=c_NJ_N(kr)\cos(N\theta-\theta_0)+f_{N+1}(x,y),
\end{equation*}
near $O$,
where $(r,\theta)$ are the polar coordinates of $(x,y)$. Here $c_N\in\RR\backslash\{0\}$ and $\theta_0\in[0,2\pi)$ are constants depending on $v$, $N\in\NN^\ast$ is the vanishing order of $v$ at $P$ and $f_{N+1}=O(r^{N+1})$ is a real-analytic function in $(x,y)$. As a consequence, the Taylor series of $v$ near $O$ is of the form
\begin{equation*}
v(x,y)=\tilde{c}_Nr^N\cos(N\theta-\theta_0)+\tilde{f}_{N+1}(x,y),
\end{equation*}
with $\tilde{c}_N\neq 0$ and $\tilde{f}_{N+1}$ real-analytic.
Hence, there are exactly $N$ analytic nodal curves of $v$ intersecting at $P$, with tangential directions along angles in the set $\{(\pi/2+\kappa\pi+\theta_0)/N\,|\,\kappa\in\mathbb{Z}\}$. Therefore we see that the angle between two tangential directions is necessarily equal to $\kappa'\pi/N$ for some $\kappa'\in\NN$. 
\end{proof}

\subsubsection{Extendable Neumann eigenfunctions}\label{sec:Neumann}

What we did in \S\ref{DirichletEigenVec} to find domains $\Om$ where the Dirichlet Laplacian admits extendable eigenfunctions can not be directly applied for Neumann boundary conditions. Instead of looking for nodal curves, we should consider the characteristics associated with the gradient of solutions to the Helmholtz equation. 
Let us explain in the following.\\
\newline
Start with some non-zero real valued function $v$ satisfying $\Delta v+k^2v=0$ in some domain, say $\RR^2$ for simplicity, for a given $k>0$. Pick some point $(x_0,y_0)$ such that $\nabla v(x_0,y_0)\ne0$. Then consider the gradient system
\begin{equation}\label{eq:ODENeu}
\begin{array}{|rcl}
X'(t)&=&\nabla v(X(t))\\[3pt]
X(0)&=& (x_0,y_0).
\end{array}
\end{equation}
The Cauchy-Lipschitz theorem ensures that (\ref{eq:ODENeu}) admits a unique maximal solution $X$ defined on an interval $I\subset\RR$ containing $0$. Set $\Gamma:=\{X(t)\,|\,t\in I\}\subset\RR^2$ and introduce $\nu$ a unit normal vector to the smooth curve $\Gamma$ whose orientation is arbitrarily fixed. For $t\in I$, we have
\[
\partial_\nu v(X(t))=\nu(X(t))\cdot \nabla v(X(t))=\nu(X(t))\cdot X'(t)=0
\]
because $X'(t)$ and $\nu(X(t))$ are respectively tangent and normal to the orbit $t\mapsto X(t)$. Thus $v$ satisfies an homogeneous Neumann boundary condition on $\Gamma$. Therefore if we are able to find a bounded domain $\Om$ whose boundary coincides with the union of orbits of Problem (\ref{eq:ODENeu}) for different initial conditions $(x_0,y_0)$, then this shows that $k$ belongs to the set $\mathscr{S}_{\mrm{NS}}$ associated with $\Om$. Let us illustrate the approach with some specific examples. 

\begin{example}\label{ex:cosxy}
Consider the function 
\begin{equation}\label{DefFuncv}
v(x,y)=-(\cos x+\cos y)    
\end{equation}
which satisfies $\Delta v+2v=0$ in $\RR^2$. For this $v$, \eqref{eq:ODENeu} reads
\begin{equation}\label{ODEex1}
\begin{array}{|rcl}
x'(t)&=&\sin x(t)\\[3pt]
y'(t)&=&\sin y(t)\\[3pt]
(x(0),y(0))&=& (x_0,y_0).
\end{array}
\end{equation}
The stationary points of this system of ordinary differential equations are the $(m\pi,n\pi)$, $m,n\in\mathbb{Z}$. These are constant orbits. Moreover, one can check that
\[
\{0\}\times(0,\pi),\quad\{\pi\}\times(0,\pi),\quad(0,\pi)\times\{0\},\quad(0,\pi)\times\{\pi\}
\]
are also orbits of (\ref{ODEex1}). Now pick some $(x_0,y_0)\in (0,\pi)^2$. Since different orbits cannot cross, we know that the one passing through $(x_0,y_0)$ stays inside $(0,\pi)^2$ and so the associated trajectory is global, i.e. defined for all $t\in\RR$. It turns out that we can compute it explicitly and we find
\[
x(t)=2\arctan(e^t\tan\frac{x_0}{2}),\qquad\quad y(t)=2\arctan(e^t\tan\frac{y_0}{2}).
\]
After a few operations, we obtain that the corresponding orbit coincides with the curve
\[
\Gamma(\delta):=\{(x,y(x))=2\arctan (\delta\tan (x/2)))\,|\,x\in(0,\pi)\}
\]
where $\delta:=\tan(y_0/2)\tan(x_0/2)>0$ (see Figure \ref{fig:NeumannTrig} left for representations of different $\Gamma(\delta)$). For any $0<\delta_1<\delta_2$, denote $\Om(\delta_1,\delta_2)$ the domain located between $\Gamma(\delta_1)$ and $\Gamma(\delta_2)$. Then $v$ is an eigenfunction of the Neumann Laplacian in $\Om(\delta_1,\delta_2)$ (see Figure \ref{fig:NeumannTrig} right for pictures of some $\Om(\delta_1,\delta_2)$). Since $v$ satisfies $\Delta v+2v=0$ in $\RR^2$, we deduce that \Blu{$\sqrt{2}$} belongs to $\mathscr{S}_{\mrm{NS}}$ for this domain. \\
Note that we also have \Blu{$\sqrt{2}\in \mathscr{S}_{\mrm{NS}}$} for the domain delimited for example by the curves 
\[
\{0\}\times(0,\pi),\qquad\{\pi\}\times(0,\pi),\qquad \Gamma(\delta),\qquad \mbox{for any $\delta>0$}.
\]

\begin{figure}[!ht]
	\centering
\includegraphics[width=7.9cm]{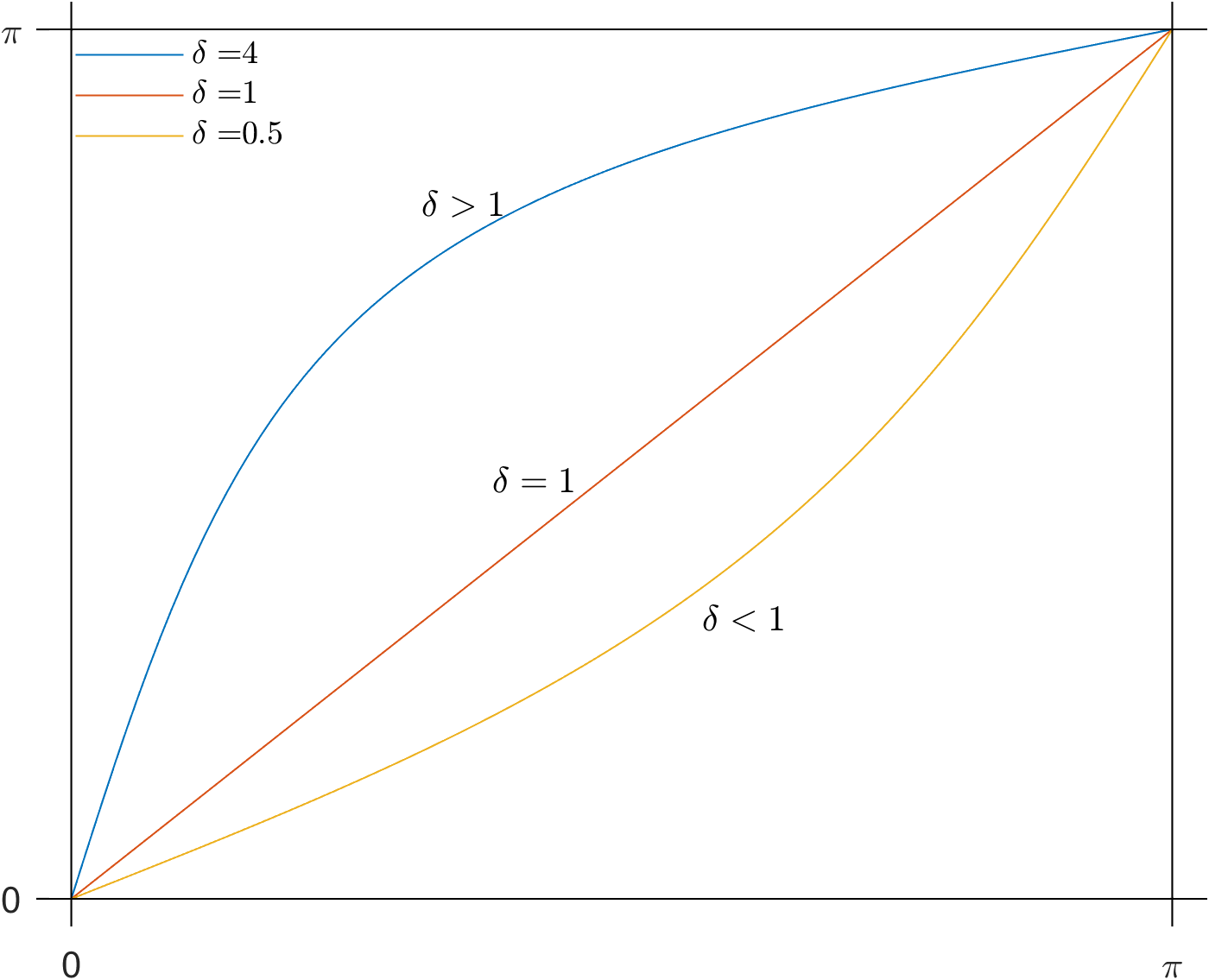}\qquad
\raisebox{3.2cm}{\begin{tabular}{cc}
\includegraphics[width=3.2cm]{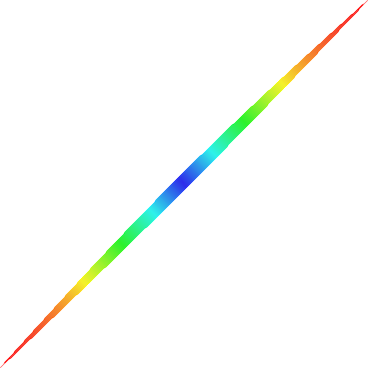} & \includegraphics[width=3.2cm]{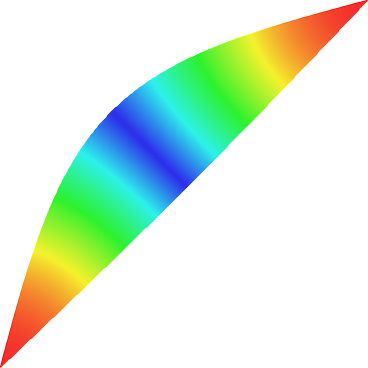} \\
\includegraphics[width=3.2cm]{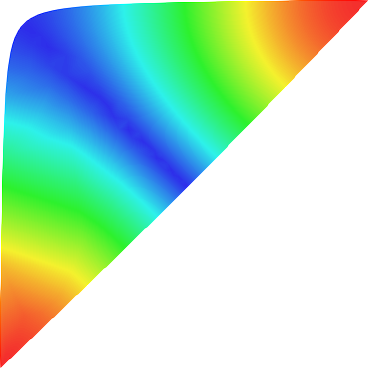} & \includegraphics[width=3.2cm]{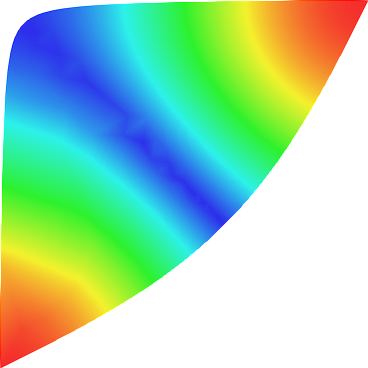} 
\end{tabular}}
	\caption{Left: curves $\Gamma(\delta)$ for $\delta=0.5, 1, 4$. Right: function $v$ defined in (\ref{DefFuncv}) in a few domains where it is an eigenfunction of the Neumann Laplacian associated with the eigenvalue $2$.}
	\label{fig:NeumannTrig}
\end{figure}
\end{example}

\noindent At this stage, let us make a few comments. First, when one enlarges a bounded domain, due to the \textit{min-max} principle, the eigenvalues of the Dirichlet Laplacian decrease. This is not true for the eigenvalues of the Neumann Laplacian, roughly speaking because one cannot extend them by zero. This is well illustrated by the above example. We have a family of \Blu{continuously} deformed shapes which all have $2$ among the eigenvalues of the Neumann Laplacian, \Blu{that is, $\sqrt{2}\in \mathscr{S}_{\mrm{N}}$}.\\
\newline
Second, this example shows that one can find domains where the Neumann Laplacian has eigenfunctions which extend as solutions to the Helmholtz solution in the whole space and which admit corners with arbitrary apertures in $(0,2\pi)$. This is very different from the Dirichlet case where in order to be able to extend the eigenfunctions of the Laplace operator, the aperture of the corners must be in $(0,2\pi)\cap\mathbb{Q}\pi$ (see Proposition \ref{rem:Dirdomains}).\\
\newline
The reason we can have such domains $\Om$ with corners of arbitrary aperture in Example~\ref{ex:cosxy} is that $\mathcal{H} v(O)$, the Hessian of $v$ at the stationary point $O=(0,0)$, is equal to the identity matrix. As a consequence, the linearized problem of (\ref{ODEex1}) at $O$ simply writes $U'(t)=U(t)$ with $U=(U_x,U_y)^\top$. Solving it, we get $U_x(t)=a_x\,e^{t}$, $U_y(t)=a_y\,e^{t}$ for some constants $a_x$, $a_y$. Thus we see that the curve $\{(U_x(t),U_y(t)),\,t\in(-\infty;0)\}$ is a line which can take any direction by choosing properly $a_x$, $a_y$.  This explains why the orbits of (\ref{ODEex1}) can leave from $O$ asymptotically with any angle.\\
\newline
Let us consider a second example.
\begin{example}
\Blu{In this example we consider a particular case when $k^2=5$.} Start from the function $v$ such that $v(x,y)=\cos x\cos 2y$. It satisfies $\Delta v+5v=0$ in $\RR^2$. The set of stationary points associated with \eqref{eq:ODENeu} is $\{(m\pi,n\pi/2), (\pi/2+m\pi,\pi/4+n\pi/2)\,|\,m,n\in\mathbb{Z}\}$. By computing the Hessian $\mathcal{H}v$, one finds that the $(\pi/2+m\pi,\pi/4+n\pi/2)$ are saddle points. At saddle points, orbits of (\ref{eq:ODENeu}) can meet the equilibrium point at only two particular directions (see Figure \ref{fig:NeumannTrigCuspR}). On the other hand, for $m,n\in\mathbb{Z}$, we have $\mathcal{H}v(m\pi,n\pi/2)=(-1)^{m+n-1}\mathrm{diag}(1,4)$. Hence the orbits of \eqref{eq:ODENeu} are all tangential to the horizontal line at $(0,0)$ except one that is tangential to the vertical line. Indeed, for example for $m=n=0$, the linearized problem associated with \eqref{eq:ODENeu} writes $U'(t)=-\mathrm{diag}(1,4)\cdot U(t)$ so that its solution is given by $U_x(t)=a_x\,e^{-t}$, $U_y(t)=a_y\,e^{-4t}$ for some constants $a_x$, $a_y$. Thus we see that except when $a_x=0$, the curve $\{(U_x(t),U_y(t)),\,t\in(0;+\infty)\}$ is tangential to the horizontal line when $t\to+\infty$.\\
We can make this more explicitly by computing the orbits. First we find that 
\[
(0,0),\quad (\pi,0),\quad (\pi/2,0),\quad (\pi/2,\pi),\quad (0,\pi)\times\{0\},\quad \{\pi\}\times (0,\pi/2),\quad (0,\pi)\times\{\pi/2\},\quad \{0\}\times (0,\pi/2)
\]
are particular orbits of (\ref{eq:ODENeu}). Then we obtain that the (global) trajectory $(x(t),y(t))$ passing through $(x_0,y_0)\in(0,\pi)\times(0,\pi/2)$ is such that
\begin{equation*}
		\sin (2y(t))=\delta \sin^4x(t)
	\end{equation*}
for $\delta>0$.	We show in Figure \ref{fig:NeumannTrigCusp} the normalized gradient flow and some of the ``Neumann curves''. The latter enclose domains $\Om$ for which $v$ is a Neumann eigenfunction so that \Blu{the frequency $k=\sqrt{5}\in\mathscr{S}_{\mrm{NS}}$}. We get that $\Om$ can be of class $\mathscr{C}^1$, \Blu{Lipschitz,} but may also possess cusps.

\begin{figure}[!ht]
		\begin{subfigure}{.5\linewidth}
			\centering\includegraphics[width=\textwidth]{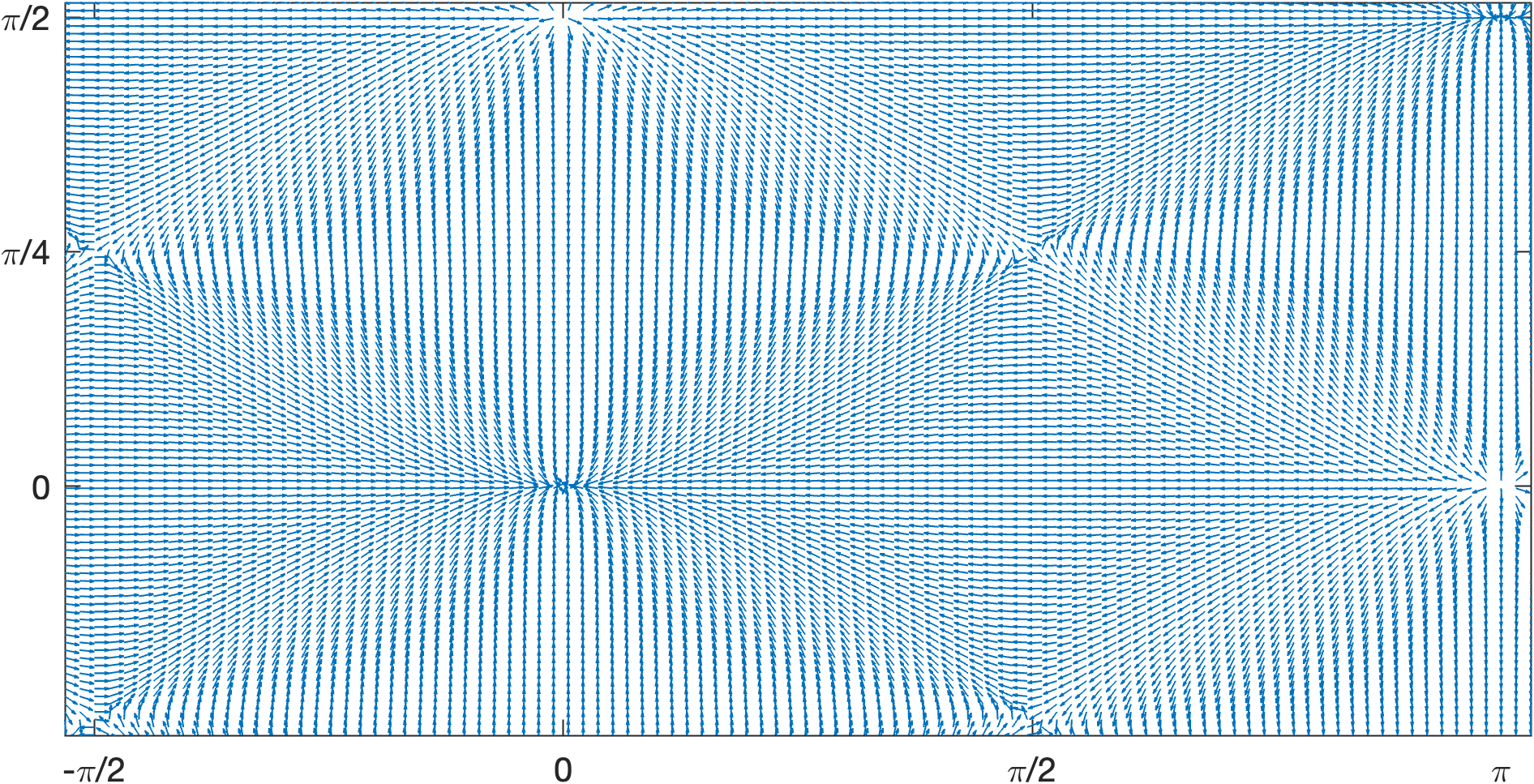}
			\caption{Normalized gradient $\nabla v/|\nabla v|$.}
		\end{subfigure}
	\begin{subfigure}{.5\linewidth}
		\centering\raisebox{0.8cm}{\includegraphics[width=0.9\textwidth]{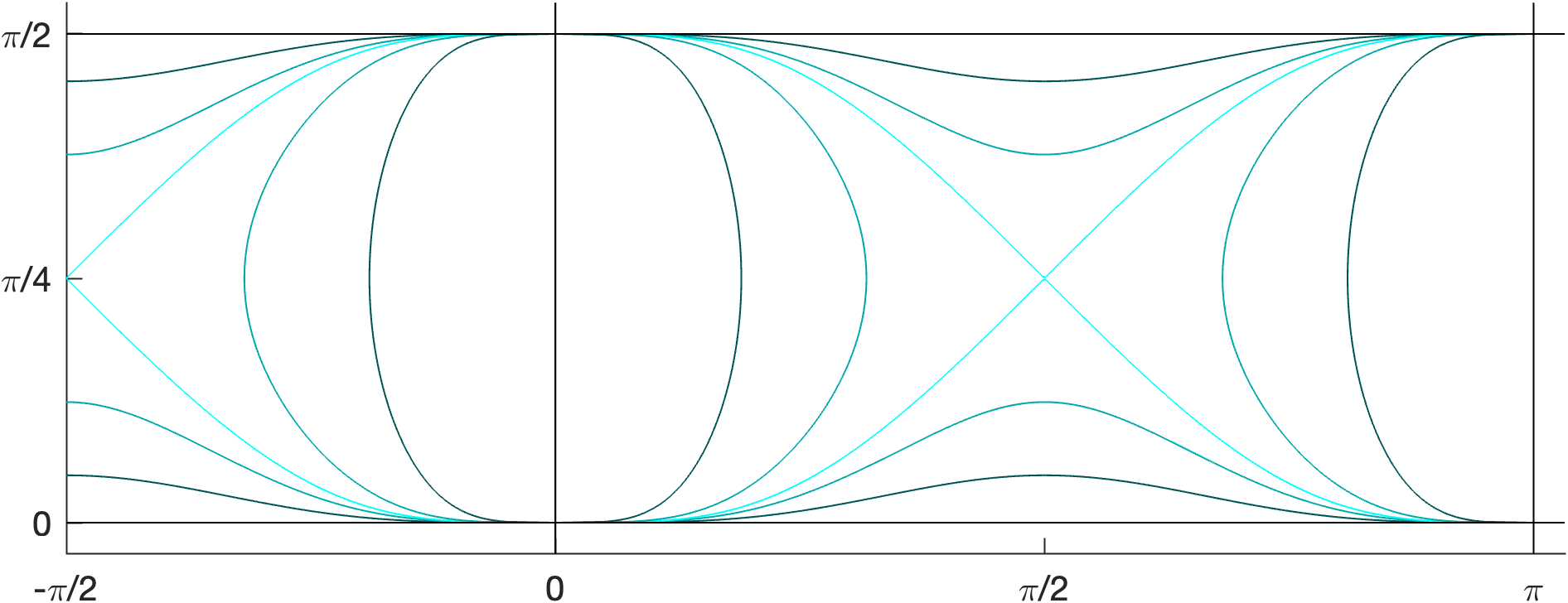}}
		\caption{Neumann curves.\label{fig:NeumannTrigCuspR}}
	\end{subfigure}
	\caption{Case $v(x,y)=\cos x\cos 2y$.}
	\label{fig:NeumannTrigCusp}
	\end{figure}
\end{example}

\begin{example}[Neumann eigenfunctions extendable in a neighbourhood of $\overline{\Omega}$ but not to the whole $\RR^d$]
Let us \Blu{define} $v$ as in \eqref{eq:fRgen} with $L=2$, $\mu=7/2$, $a=\cos(\pi/7)$ and $b_0=1=-b_1$, with $k$ being the first positive zero of $J_{\mu}$ (see Figure \ref{fig:EPLip}).
We can calculate the gradient $\nabla v$ explicitly, say, when $|x_1|<a$, as
\[
\nabla v(x,y)=\sum_{l=0}^{1}(-1)^l\PAre{kJ'_\mu(k\rho_l)\cos(\mu\psi_l)\nabla \rho_l-\mu J_\mu(k\rho_l)\sin(\mu\psi_l)\nabla\psi_l},
\]
with
\[
\rho_l=\sqrt{\pare{a+(-1)^lx_1}^2+x_2^2},\AND
				\psi_l=(-1)^l\arcsin\frac{x_2}{\rho_l},\qquad l=0,1.
\]
It can be shown that $\nabla v=0$ at $(0,\pm\sin(\pi/7))$ and $(\pm t_0,0)$, where $t_0$ is the (only) solution to $J_{\mu}'\pare{k(a+t_0)}+J_{\mu}'\pare{k(a-t_0)}=0$ in the interval $(0,a)$.
	The normalized gradient flow is shown in Figure~\ref{fig:grad}. 
	Moreover, by (numerically) solving the system of ODEs \eqref{eq:ODENeu}, we can construct curves on which $\partial_{\nu}v=0$, and some of those curves enclose bounded $\mathscr{C}^1$ or Lipschitz domains (with corners of opening angle $\pi/2$), or domains with cusps.
	We display some of these domains in Figure \ref{fig:NeuDom}, where the starting points $X(0)=(x_0,y_0)$ satisfy $x_0=0$ and $y_0\in\{\alpha\sin(\pi/7)\,|\, |\alpha|<1\}$. In the numerics, we take $\pm\alpha\in\{0, 0.1, 0.5, 0.8, 0.9, 0.9999\}$.
	The curves on the left side of the starting points are obtained by solving \eqref{eq:ODENeu} and letting $t\to+\infty$. We get $\lim_{t\to+\infty}X(t)=(-t_0,0)$. Those on the right side are obtained by solving the ``backward'' system $X'(t)=-\nabla v(X(t))$, in which case $\lim_{t\to+\infty}X(t)=(t_0,0)$. An explicit computation gives $\mathcal{H}v(\pm t_0,0)=\pm\text{diag}\,(\lambda_1,\lambda_2)$ with $0<\lambda_2<\lambda_1$. As a consequence, though this does not appear clearly in Figure \ref{fig:NeumanC} because here we do not zoom at $(\pm t_0,0)$, all the orbits  except the horizontal one, are tangential to the vertical axis at $(\pm t_0,0)$. 
	
\begin{figure}[!ht]
\centering
		\begin{subfigure}{.4\textwidth}
			\includegraphics[scale=.8]{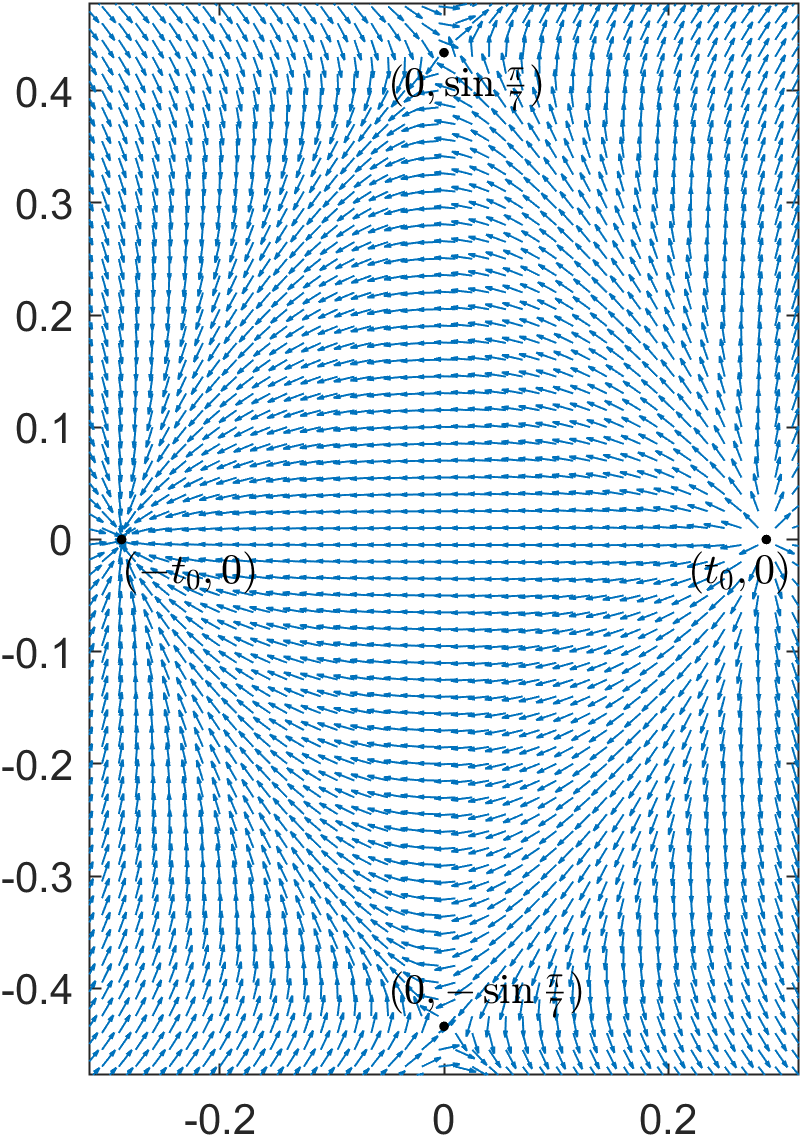}
			\caption{Normalized gradient $\nabla v/|\nabla v|$.}
			\label{fig:grad}
		\end{subfigure}\hfil
		\begin{subfigure}{.4\textwidth}
			\includegraphics[scale=.8]{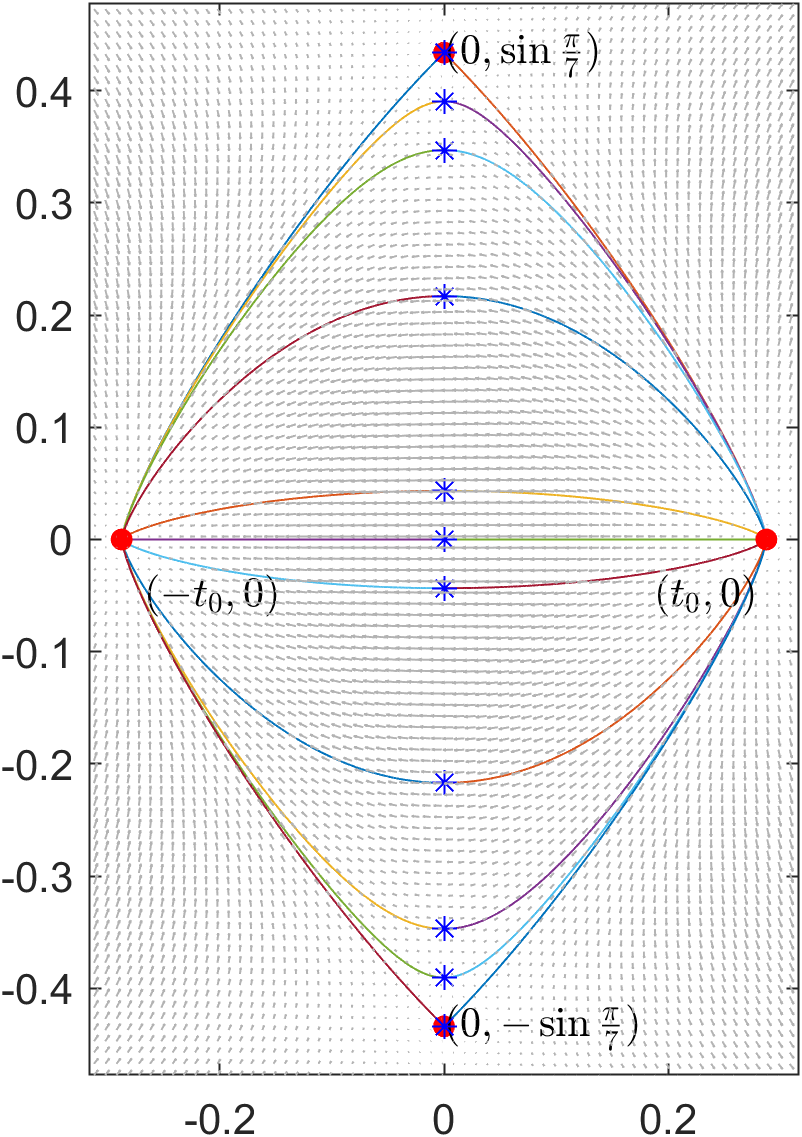}
			\caption{Neumann curves.\label{fig:NeumanC}}
		\end{subfigure}
		\caption{Case $v$ as in \eqref{eq:fRgen} with $L=2$, $\mu=7/2$, $a=\cos(\pi/7)$, $b_0=1=-b_1$.}
		\label{fig:NeuDom}
	\end{figure}
\end{example}

\section{Non-scattering with anisotropic materials}\label{section3}

In this section, we provide examples of anisotropic materials, more precisely materials for which $A$ in (\ref{CaseAq}) is a matrix valued function, that support non-scattering frequencies. For some of them, non-scattering occurs at all $k>0$ so that in particular the set of transmission eigenvalues $\mathscr{S}_{\mrm{TE}}$ contains $(0,+\infty)$. 

\subsection{Non-scattering via diffeomorphism}
We first look for non-scattering waves and media by using diffeomorphisms following for example \cite{KohnV84} (see also \cite{CakoniVX23}). Let $v$ be an entire solution to the Helmholtz equation, i.e. such that
\[
\Delta v+k^2 v=0 \qquad\mbox{in $\RR^d$}.
\]
Given a bounded Lipschitz domain  $\Omega$ in $\RR^d$, let $\Psi$ be a $\mathscr{C}^m(\overline{\Omega})$, $m\ge 2$, diffeomorphism 
satisfying $\Psi =\Id $ on $\partial\Omega$.
Define
\begin{equation}\label{eq:traq1}
{A}=\frac{D\Psi D^T\Psi}{\abs{\det D\Psi}}\circ\Psi^{-1}   \AnD {q}=\frac{1}{\abs{\det D\Psi}}\circ\Psi^{-1}  \qquad\mbox{in $\Omega$},
\end{equation}
where $D\Psi$ is the Jacobian matrix of $\Psi$. Note in particular that $A$ is symmetric. Then the function $u:=v\circ\Psi^{-1}$ satisfies $u=v$ on $\partial\Omega$ and
\begin{equation}\label{eq:aftr}
	\begin{split}
		\div(A \nabla u )+ k^2 q u=0 \quad\mbox{in $\Omega$},&\qquad\partial_{\nu}^{{A}}u=\partial_{\nu} v\quad\mbox{on $\partial \Omega$,}
	\end{split}
\end{equation}
with $\partial_{\nu}^A\cdot:=\nu\cdot A\nabla\cdot$. 
As a consequence, we have the following lemma. 
\begin{lemma}\label{lem:diffeoNonscat}
	Let $\Omega$  be a bounded Lipschitz domain in $\RR^d$ and let $\Psi$ be a $\mathscr{C}^2(\overline{\Omega})$ diffeomorphism on $\overline{\Omega}$ satisfying $\Psi =\Id $ on $\partial\Omega$. Define $A$, $q$ by \eqref{eq:traq1}. Then $A$, $q$ satisfy assumptions (\ref{AssumCoef}) and $(A,q;\Omega)$ is non-scattering for any incident waves at any frequencies.
\end{lemma}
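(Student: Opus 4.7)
The plan is to verify the two assertions directly, using exclusively the transformation identities already established in \eqref{eq:traq1}--\eqref{eq:aftr}. For the first assertion (the assumptions \eqref{AssumCoef}), I would observe that $\Psi$ being a $\mathscr{C}^2$ diffeomorphism on the compact set $\overline{\Omega}$ forces $D\Psi$ and its inverse to be uniformly bounded, and $|\det D\Psi|$ to be uniformly bounded away from $0$ and $+\infty$. From this, $q \in \mL^\infty(\Omega)$ with $\mbox{ess inf } q > 0$, and the quadratic form $\xi \cdot A(\x) \xi = (|D^T\Psi \, \xi|^2 / |\det D\Psi|)\circ\Psi^{-1}(\x)$ is uniformly positive on $\overline{\Omega}$. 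Extending $A = \mrm{Id}$ and $q = 1$ outside $\overline{\Omega}$ by definition completes the check.

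For the non-scattering assertion, I would fix an arbitrary $k > 0$ and an arbitrary incident field $u_i$ solving \eqref{HelmholtzHomo} in some neighborhood $\tilde{\Omega}$ of $\overline{\Omega}$. The key step is to introduce $u := u_i \circ \Psi^{-1}$ in $\overline{\Omega}$ and to define the candidate scattered field by $u_s := u - u_i$ inside $\Omega$ and $u_s := 0$ outside. Thanks to the relations \eqref{eq:aftr}, which are a direct consequence of the chain rule together with the definition \eqref{eq:traq1}, the function $u$ satisfies $\div(A \nabla u) + k^2 q u = 0$ in $\Omega$, together with $u = u_i$ and $\nu \cdot A \nabla u = \nu \cdot \nabla u_i$ on $\partial\Omega$ (using $\Psi = \mrm{Id}$ there).

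The main --- and essentially only --- technical point will be to check that the $u_s$ so defined is indeed a weak solution of \eqref{eq:MainGov1} in all of $\RR^d$. Continuity of $u_s$ across $\partial\Omega$ is immediate from the matching $u|_{\partial\Omega} = u_i|_{\partial\Omega}$. Inside $\Omega$, the classical equation for $u_s$ reduces, using $\Delta u_i + k^2 u_i = 0$, to the desired right-hand side $-\div((A-\mrm{Id})\nabla u_i) - k^2 (q-1) u_i$, while outside $\Omega$ both sides vanish. The hard part is verifying the jump condition at $\partial\Omega$: a distributional integration by parts shows that the singular contributions of $\div(A \nabla u_s)$ and $\div((A-\mrm{Id})\nabla u_i)$ cancel exactly when $\nu \cdot A \nabla u = \nu \cdot \nabla u_i$ on $\partial\Omega$, which is precisely the co-normal identity supplied by \eqref{eq:aftr}. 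Since $u_s \equiv 0$ outside $\Omega$ trivially satisfies the Sommerfeld radiation condition, uniqueness for \eqref{eq:MainGov1} identifies this $u_s$ as the actual scattered field. This shows that $u_i$ is a non-scattering incident field and that $k \in \mathscr{S}_\mrm{NS}$; since $u_i$ and $k$ were arbitrary, the lemma follows.
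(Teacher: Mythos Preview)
Your proposal is correct and follows essentially the same route as the paper: both rely on the transformation identities \eqref{eq:traq1}--\eqref{eq:aftr} to show that $u:=u_i\circ\Psi^{-1}$ matches $u_i$ in value and co-normal derivative on $\partial\Omega$, which is exactly what forces the scattered field to vanish outside $\Omega$. The only cosmetic difference is that the paper phrases the conclusion via the ITEP characterization (Proposition~\ref{EquivDefNS}), whereas you verify the scattering problem \eqref{eq:MainGov1} directly; the underlying argument is the same.
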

\begin{remark}
Notice for $A$ defined in \eqref{eq:traq1} we always have $\det A= \abs{\det D\Psi}^{2-d}\circ\Psi^{-1}$. In particular $\det A =1$ in dimension two and  $\det A = q$ in dimension three. This may shed light to the verification of optimality for results on the discreteness of transmission eigenvalues; See also \cite{LaVaSu,NgNg17}.
\end{remark}
\noindent As observed in \cite{CakoniVX23}, given any bounded Lipschitz domain $\Omega\subset\RR^d$, one can construct infinitely many diffeomorphisms that satisfy the conditions in Lemma~\ref{lem:diffeoNonscat}. For instance, let $\Phi: \RR^d\to \RR^d$ be a $\mathscr{C}^{2}$ map such that $\Phi=0$ in $\RR^d\setminus\Omega$. Then the mapping $\Psi$ defined as
	\begin{equation}\label{eq:Psi}
		\Psi= \mrm{Id} + \eps \Phi.
	\end{equation}
	 is a $\mathscr{C}^2(\overline{\Omega})$ diffeomorphism  for $\eps>0$ sufficiently small that satisfies $\Psi=\Id $ on $\partial\Omega$.\\
\newline
Next, we give two explicit non-scattering examples (where $\eps$ in \eqref{eq:Psi} does not have to be small) with $\Omega$ being a square and a disk, respectively.
\begin{example}[Geometry with corners]
	Let $\Omega=(-1,1)^2\subset\RR^2$. For a fixed constant $\alpha\in(-1/2,1/2)$, define
	\begin{equation*}
		\Psi(x,y)=\pare{x+\alpha(1-x^2)(1-y^2)\,,\, y}.
	\end{equation*}
	It can be verified that $\Psi:\overline{\Omega}\to\overline{\Omega}$ is a $\mathscr{C}^{\infty}$ diffeomorphism with $\Psi|_{\partial\Omega}=\Id $.
	Then the medium $(A,q;(-1,1)^2)$ as in \eqref{eq:traq1}
	is non-scattering for all incident waves at all frequencies.
	We can express $A$ and $q$ in a more explicit form as
\[
		A\circ\Psi(x,y)=\frac{1}{1-2\alpha x(1-y^2)}\left(\begin{array}{cc}
			\pare{1-2\alpha x(1-y^2)}^2+4\alpha^2 y^2(1-x^2)^2&-2\alpha y(1-x^2)
			\\-2\alpha y(1-x^2)&1
		\end{array}\right)
\]
	and
	\begin{equation*}
		q\circ\Psi=\frac{1}{1-2\alpha x(1-y^2)}.
	\end{equation*}
\end{example}
\begin{example}[Non-spherically stratified disk]
	Let $f\in \mathscr{C}^{2}_0(\RR^+)$ satisfy $f(1)=0$.
	Define $\Psi$, in polar coordinates, as
	\begin{equation*}
		\Psi(r,\theta)=(r,\theta+f(r)).
	\end{equation*}
	Then $\Psi: \overline{B_1}\to \overline{B_1}$ is a $\mathscr{C}^2$ diffeomorphism with $\Psi|_{\partial\Omega}=\Id $ (this latter property comes from the constraint $f(1)=0$). Moreover, $\Psi$ is orientation- and area-preserving with $\det D\Psi =1$.
As a consequence, the inhomogeneity $(A,1;B_1)$ with $A$ as in (\ref{eq:traq1}) is non-scattering for all incident waves at all frequencies.  Note that we find
	\begin{equation*}
		A(x,y)=\mrm{Id}-\frac{f'(r)}{r}\left(\begin{array}{cc}
			2xy&y^2-x^2\\y^2-x^2&-2xy
		\end{array}\right)
	+\pare{f'(r)}^2\left(\begin{array}{cc}
		y^2&-xy\\-xy&x^2
	\end{array}\right),
\quad r=\pare{x^2+y^2}^{1/2}.
	\end{equation*}
\end{example}
\paragraph{On the structure of the anisotropy induced by \Blu{diffeomorphism} transforms.}  The following two results reveal an interesting geometry property of diffeomorphisms satisfying the conditions in Lemma~\ref{lem:diffeoNonscat}. 
\begin{lemma}\label{lem:diffeoGeo}
	Let $\Omega$ and $\Psi$ be as in Lemma~\ref{lem:diffeoNonscat}. Then we must have
\[
		\PAre{\frac{D^T\Psi}{\abs{\det D\Psi}}-\mrm{Id}}\nu=0\qquad\mbox{on $\partial\Omega$}.
\]
\end{lemma}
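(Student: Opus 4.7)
The plan is to exploit the constraint $\Psi|_{\partial\Omega}=\mrm{Id}$ to pin down the structure of $D\Psi$ on $\partial\Omega$, and then verify the claimed identity by a direct linear-algebra computation.

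First I would differentiate the relation $\Psi=\mrm{Id}$ along any vector $\tau$ tangent to $\partial\Omega$ to get $D\Psi\,\tau=\tau$ at every boundary point, i.e.\ $(D\Psi-\mrm{Id})\tau=0$ for all tangent $\tau$. Since the tangent space at $x\in\partial\Omega$ is $(d-1)$-dimensional, the matrix $D\Psi-\mrm{Id}$ has rank at most one, with image contained in the line spanned by $\nu(x)$. Hence there exists a vector field $w$ defined on $\partial\Omega$ such that
\[
D\Psi \,=\, \mrm{Id} + w\,\nu^T \qquad\text{on }\partial\Omega.
\]
Choosing local coordinates in which $\partial\Omega=\{x_d=0\}$ and $\nu=e_d$ makes this fully transparent: only the normal derivatives $\partial_{x_d}\Psi_i$ are left free by $\Psi|_{\partial\Omega}=\mrm{Id}$, and they form precisely the vector $w+\nu$.

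Next, I would apply the matrix determinant lemma to obtain $\det D\Psi = 1+w\cdot\nu$ on $\partial\Omega$. To fix the sign, I would note that since $\Psi$ maps $\overline{\Omega}$ diffeomorphically onto itself, the Taylor expansion $\Psi(x-\varepsilon\nu)=x-\varepsilon D\Psi(x)\nu+O(\varepsilon^2)$ at $x\in\partial\Omega$ must stay in $\overline{\Omega}$ for small $\varepsilon>0$, which together with $\det D\Psi\neq 0$ forces $\nu\cdot D\Psi(x)\nu>0$. Since $\nu\cdot D\Psi(x)\nu = 1+w\cdot\nu$ in view of the above expression for $D\Psi$, this yields $|\det D\Psi|=1+w\cdot\nu>0$ on $\partial\Omega$.

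The proof is then completed by transposition and a one-line computation:
\[
D^T\Psi\,\nu \,=\, (\mrm{Id}+\nu w^T)\,\nu \,=\, \nu+(w\cdot\nu)\,\nu \,=\, (1+w\cdot\nu)\,\nu,
\]
so dividing by $|\det D\Psi|=1+w\cdot\nu$ gives $\dfrac{D^T\Psi}{|\det D\Psi|}\,\nu=\nu$ on $\partial\Omega$, which is exactly the claimed identity. I do not foresee a serious obstacle: the conceptual core is the rank-one identification of $D\Psi-\mrm{Id}$ on the boundary forced by the tangential rigidity of $\Psi$, after which the rest reduces to an application of the matrix determinant lemma and a short orientation check.
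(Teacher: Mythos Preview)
Your proof is correct and takes a genuinely different route from the paper's. The paper argues through the PDE framework already set up in Lemma~\ref{lem:diffeoNonscat}: for each coordinate function $v(\x)=x_j$ (harmonic) and $u=v\circ\Psi^{-1}$, the co-normal transmission identity $\partial_\nu^{A}u=\partial_\nu v$ on $\partial\Omega$ (equation~\eqref{eq:aftr}) reduces, after inserting $A=\frac{D\Psi D^T\Psi}{|\det D\Psi|}$ and $\nabla u=(D\Psi)^{-T}e_j$, to $\nu^T\frac{D\Psi}{|\det D\Psi|}e_j=\nu^T e_j$ for every $j$, which is exactly the claim. Your argument instead works purely at the level of linear algebra on the boundary: tangential differentiation of $\Psi|_{\partial\Omega}=\mrm{Id}$ forces $D\Psi-\mrm{Id}=w\,\nu^T$ pointwise, after which the matrix determinant lemma and a short orientation check finish the computation.

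The paper's approach is shorter once the change-of-variables identity \eqref{eq:aftr} is in hand, and it ties the result back to the scattering context. Your approach is self-contained (it does not invoke the co-normal identity at all) and makes explicit the geometric mechanism---the rank-one structure of $D\Psi-\mrm{Id}$ on $\partial\Omega$---which is otherwise hidden. One small remark: since $\Omega$ is only Lipschitz, your tangential differentiation and the orientation argument for $1+w\cdot\nu>0$ should be understood at a.e.\ boundary point where the normal exists; this is harmless (the paper's statement is implicitly a.e.\ as well), but you may want to say so explicitly. The orientation step can also be shortened by noting that $\Psi$ extends by the identity to a homeomorphism of $\RR^d$, hence has degree $+1$, so $\det D\Psi>0$ throughout $\Omega$.
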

\begin{proof}
	Let $j\in\{1,\ldots,d\}$ be fixed. Let $v(\x)=x_j$ and let $u=v\circ\Psi^{-1}$.
	Then $u$, $v$ satisfy
\[
		\div(A \nabla u)=0\AnD \Delta v=0\quad\mbox{in $\Omega$},
		\qquad
		u=v\AnD \partial_{\nu}^Au=\partial_{\nu} v\quad\mbox{on $\partial \Omega$},
\]
	with $A$ defined in \eqref{eq:traq1}.
	Furthermore, by straightforward calculations we have
	\[\partial_{\nu} v=\nu^T\vec{e}_j\AND
	\partial_{\nu}^Au=\nu^T\frac{D\Psi D^T\Psi}{\abs{\det D\Psi}}(D^T\Psi^{-1})\,\vec{e}_j
	=\nu^T\frac{D\Psi}{\abs{\det D\Psi}}\, \vec{e}_j
	\qquad\mbox{on $\partial\Omega$}.\]
	The proof is complete by taking all $j\in\{1,\ldots,d\}$.
\end{proof}
\noindent Combining this result with \cite[Theorem 2.1]{CakoniVX23}, we obtain the following statement when $\Omega$ is not smooth.
\begin{proposition}
	Let $\Omega$ be a $\mathscr{C}^{1,\alpha}$ bounded domain in $\RR^d$, and let $\Psi$ be a $\mathscr{C}^{m+2}$ (\resp  $\mathscr{C}^{\infty}$ or (real) analytic) diffeomorphism on $\overline{\Omega}$ with $m\ge 1$ satisfying $\Psi|_{\partial\Omega}=\Id $.
	Then
	\begin{equation}
		\pare{D\Psi-\mrm{Id}}\nu=0\qquad \mbox{for\quad $\x_0\in\partial\Omega$},
	\end{equation}
where $\x_0\in\partial\Omega$ is such that $\partial\Omega$ is not $\mathscr{C}^{m,\alpha}$  (\resp  $\mathscr{C}^{\infty}$ or (real) analytic) in any neighbourhood of $\x_0$.
\end{proposition}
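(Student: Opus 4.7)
The plan is to derive the stated identity by combining Lemma~\ref{lem:diffeoGeo} with the boundary structure result \cite[Theorem 2.1]{CakoniVX23}. The latter asserts, roughly, that whenever a pair $(A,q;\Omega)$ satisfying (\ref{AssumCoef}) with $A$, $q$ of class $\mathscr{C}^{m,\alpha}$ (resp. $\mathscr{C}^{\infty}$ or analytic) on $\overline{\Omega}$ is non-scattering at a suitable set of frequencies, then at every point $\x_0\in\partial\Omega$ where $\partial\Omega$ fails to be $\mathscr{C}^{m,\alpha}$ (resp. $\mathscr{C}^{\infty}$ or analytic) in any neighborhood, one must have $A(\x_0)\nu(\x_0)=\nu(\x_0)$.

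The first step is to verify that this black-box result applies in our setting. By Lemma~\ref{lem:diffeoNonscat}, the inclusion $(A,q;\Omega)$ defined by (\ref{eq:traq1}) is non-scattering for every $k>0$. The assumed regularity of $\Psi$ transfers through the formulas (\ref{eq:traq1}) to the coefficients, making $A$ and $q$ of class $\mathscr{C}^{m+1}$ (resp. $\mathscr{C}^{\infty}$ or analytic); in particular $A$ is more regular than $\partial\Omega$ is assumed to be near $\x_0$, so the hypotheses of the cited theorem are met. It therefore yields $A(\x_0)\nu(\x_0)=\nu(\x_0)$.

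The second step is algebraic. Since $\Psi=\Id $ on $\partial\Omega$, at any boundary point $\x\in\partial\Omega$ we have $\Psi^{-1}(\x)=\x$, so by (\ref{eq:traq1})
\[
A(\x)=\frac{D\Psi(\x)\,D^T\Psi(\x)}{\abs{\det D\Psi(\x)}}.
\]
Lemma~\ref{lem:diffeoGeo} furthermore provides $D^T\Psi(\x)\,\nu(\x)=\abs{\det D\Psi(\x)}\,\nu(\x)$ on $\partial\Omega$. Substituting this into the identity $A(\x_0)\nu(\x_0)=\nu(\x_0)$ produced in Step~1 gives
\[
\frac{D\Psi(\x_0)}{\abs{\det D\Psi(\x_0)}}\,\bigl(\abs{\det D\Psi(\x_0)}\,\nu(\x_0)\bigr)=\nu(\x_0),
\]
which simplifies to $D\Psi(\x_0)\,\nu(\x_0)=\nu(\x_0)$, i.e. the claimed $(D\Psi-\mrm{Id})\nu=0$ at $\x_0$.

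The only genuinely delicate point is invoking \cite[Theorem 2.1]{CakoniVX23} correctly: one must check that the regularity gap between the coefficients $(A,q)$ and the boundary $\partial\Omega$ at $\x_0$ matches the hypotheses of that theorem in each of the three regularity scales ($\mathscr{C}^{m,\alpha}$, $\mathscr{C}^{\infty}$, analytic). This is precisely why $\Psi$ is taken of class $\mathscr{C}^{m+2}$ (rather than $\mathscr{C}^{m,\alpha}$) in the finite-smoothness case: the two derivatives lost in (\ref{eq:traq1}) still leave $A$ strictly more regular than $\partial\Omega$ near $\x_0$. Once this is confirmed, Steps~2--3 are purely computational.
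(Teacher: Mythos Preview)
Your proof is correct and follows essentially the same route as the paper's: both combine Lemma~\ref{lem:diffeoGeo}, Lemma~\ref{lem:diffeoNonscat}, and \cite[Theorem~2.1]{CakoniVX23}, and both rely on the identity $(A-\mrm{Id})\nu=(D\Psi-\mrm{Id})\nu$ on $\partial\Omega$ (which you derive as $A\nu=D\Psi\,\nu$). The only notable difference is organizational. The paper argues by contradiction: assuming $(D\Psi-\mrm{Id})\nu\neq0$ at $\x_0$, it deduces $(A-\mrm{Id})\nu\neq0$ there, then explicitly picks an entire Helmholtz solution $v$ with $\nu^T(A-\mrm{Id})\nabla v\neq0$ at $\x_0$, and invokes the cited theorem (which is stated for a \emph{single} non-scattering wave satisfying this transversality condition) to force smoothness of $\partial\Omega$. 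Your paraphrase of \cite[Theorem~2.1]{CakoniVX23} as directly yielding $A\nu=\nu$ at non-smooth points absorbs this step; that is a legitimate corollary once one observes that the inclusion is non-scattering for \emph{all} incident waves and that $\nabla v(\x_0)$ can be prescribed in any direction, but strictly speaking the theorem itself is formulated for a fixed wave, so you should make that intermediate argument explicit rather than folding it into the citation. A minor side remark: only one derivative is lost in \eqref{eq:traq1} (passing from $\Psi$ to $D\Psi$), not two, but your regularity count still comes out right.
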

\begin{proof}
	Let $\x_0\in\partial\Omega$ be such that $\partial\Omega$ is not $\mathscr{C}^{m,\alpha}$ (\resp  $\mathscr{C}^{\infty}$ or (real) analytic) in any neighbourhood of $\x_0$. Assume by contradiction that $(D\Psi-\mrm{Id})\nu\neq0$ at $\x_0$. Let 
\[
A=\frac{D\Psi D^T\Psi}{\abs{\det D\Psi}}\quad \mbox{ in $\overline{\Omega}$}.
\]	
Then by Lemma~\ref{lem:diffeoGeo} we have
\[
(A-\mrm{Id})\,\nu = (D\Psi-\mrm{Id})\,\nu\qquad \mbox{on $\partial\Omega$},
\]
and so $(A-\mrm{Id})\nu\ne0$ at $\x_0\in\partial\Omega$. Exploiting this, we can find a solution $v$ to the homogeneous Helmholtz equation in $\RR^d$ such that $\nu^T(A-\mrm{Id})\nabla v\neq0$ at $\x_0$. But from Lemma~\ref{lem:diffeoNonscat} we know  that $v$ is non-scattering for $(A,q;\Omega)$ with $A$, $q$ given in \eqref{eq:traq1}.
	Hence by \cite[Theorem 2.1]{CakoniVX23} we must have that $\partial\Omega$ is $\mathscr{C}^{m,\alpha}$ (\resp  $\mathscr{C}^{\infty}$ or (real) analytic) near $\x_0$, which leads to a contradiction.
\end{proof}
\begin{remark}
    In particular, we show in the proof that the matrix $A$ defined in \eqref{eq:traq1} must satisfy
$$
A\,\nu = \nu
$$
at the considered ``non-smooth'' point $\x_0$ of $\partial \Omega$
\end{remark}
\subsection{Other explicit non-scattering examples}
In this section, we are focused on non-scattering examples where the total and incident fields are identical in the whole $\mathbb{R}^d$.
\begin{example}\label{ex:Adiag}
	Let $\Omega=(0,\pi)^2$ and
	\begin{equation}\label{eq:Aqaniso}
		A=\pare{\begin{matrix}
				a_1&0\\0&a_2
		\end{matrix}},\qquad\quad
		q=q_0\qquad\mbox{in $\Omega$,}
	\end{equation}
with $a_1,a_2,q_0\in\RR_+$. Assume that these coefficients satisfy one of the following assumptions:\\[4pt]
$\bullet$ $a_1\ne1$, $a_2\ne1$ and there exist $m,n\in\NN$, with $(m,n)\ne(0,0$), such that $m^2(q_0-a_1)=n^2(a_2-q_0)$;\\[2pt]
$\bullet$ $a_1=1$, $a_2\ne1$ and $(q_0-1)(a_2-1)> 0$;\\[2pt]
$\bullet$ $a_1\ne1$, $a_2=1$ and $(q_0-1)(a_1-1)> 0$.\\[2pt]
Then $\mathscr{S}_{\mrm{NS}}$ contains an unbounded sequence which accumulates at $+\infty$.
\end{example}
\begin{remark}
    It is known for the last two cases in Example~\ref{ex:Adiag} that $\mathscr{S}_{\mrm{TE}}$ is at most countable, and hence such $(A,q;\Om)$ cannot be non-scattering for all frequencies. The same is true for the first case provided $a_1q_0\neq 1$ or $a_2q_0\neq 1$ (see \cite{LaVaSu,NgNg17}); Otherwise if $a_1q_0=1$ or $a_2q_0=1$ but $q_0\neq 1$, the discreteness of $\mathscr{S}_{\mrm{TE}}$ 
    is to our best knowledge still an open question. 
\end{remark}

\begin{proof}
Suppose that $a_1,a_2,q_0$ satisfy the first set of assumptions. 
Then it can be verified straightforwardly that for any $\kappa\in\NN^\ast$, the functions $u$, $v$ such that
\[
u(\x)=v(\x)=\cos(\kappa mx)\cos(\kappa ny)\quad\mbox{in $\Omega$}
\]
solve the interior transmission eigenvalue problem (ITEP) \eqref{eq:ITEP} with $k=\kappa\sqrt{m^2+n^2}$.\\
Now assume that $a_1,a_2,q_0$ satisfy the second set of assumptions (the third one can be dealt with similarly). Then one can verify that for any $m\in\NN^\ast$, the functions $u$, $v$ such that
\[
u(\x)=v(\x)=\Pare{c_1\cos(bx)+c_2\sin(bx)}\cos(my)\quad\mbox{in $\Omega$},
\]
with $b=\sqrt{k^2-m^2}\in\RR_+\cup i\RR_+$ and where $c_1$, $c_2$ are non zero constants, solve \eqref{eq:ITEP} for $k=m\sqrt{(a_2-1)/(q_0-1)}$.
\end{proof}
\noindent	By scaling and/or rigid change of coordinates, we can adapt Example~\ref{ex:Adiag} to the case of any rectangle, with properly modified form of $A(\x)$ (not necessarily diagonal) and conditions between $A$ and $q$.
Analogous arguments and results apply when $\Omega$ is a cuboid in $\RR^3$.\\[3pt]
\noindent The case in Example~\ref{ex:Adiag} but for $q=a_j=1$ for $j=1$ or $2$ can be generalized to the following situation where $q\equiv 1$ and $A-\mrm{Id}$ is rank deficient. In this situation we lost the discreteness of the sets $\mathscr{S}_{\mrm{NS}}$ and $\mathscr{S}_{\mrm{TE}}$. A particular case of this example appears in \cite[p.1170]{LaVaSu}. 
\begin{example}\label{ex:3.11}
	Given any bounded Lipschitz domain $\Omega$ in $\RR^d$, suppose that $A=A(\x)$ satisfies
	\begin{equation*}
		A(\x)=U\left(\begin{array}{cc}
			1&0\\0&A_{d-1}(\x)
		\end{array}\right)U^T,\qquad \x\in\overline{\Omega},
	\end{equation*}
	with some $(d-1)\times(d-1)$ symmetric positive definite matrix $A_{d-1}$ and a constant orthogonal matrix $U$. Importantly, note that in this case $1$ is an eigenvalue of $A(\x)$ for all $\x\in\overline{\Omega}$. Then any $k>0$ is a non-scattering frequency for $(A,1;\Omega)$.
\end{example}
\begin{proof}
For any non-zero constants $c_1$, $c_2$, define $v_0(\x)=v_0(x_1,x')=c_1\cos(kx_1)+c_2\sin(kx_1)$. It can be verified straightforwardly that for any bounded Lipschitz domain $\tilde\Om$ and any fixed $k>0$, $(u_0,v_0)=(v_0,v_0)$ is a pair of eigenfunctions to the ITEP \eqref{eq:ITEP} for $(\tilde{A},1;\tilde\Omega)$ with $\tilde{A}=\mathrm{diag}(1,A_{d-1})$ (note in particular that $(\tilde{A}-I)\nabla v_0\equiv 0$). Then consider the change of coordinates $\mrm{y}=U\x$ for $\x\in\RR^d$ and define $v$ such that $v(\mrm{y}):=v_0\Pare{ U^{-1}\mrm{y}}$. The pair $(u,v)=(v,v)$ satisfies \eqref{eq:ITEP} for $(A,1;\Omega)$ with $\Om=\{\mrm{y}=U\x\,|\,\x\in\tilde\Om\}$. Moreover, $v$ is an entire solution to the Helmholtz equation.	The proof is complete. 
\end{proof}
\noindent The following can be viewed as a generalization of Example~\ref{ex:Adiag} when $q=a_j\neq1$ for $j=1$ or $2$.
\begin{example}\label{ex:3.13}
	Let $\Omega=(b_1,b_2)\times D\subset \RR^d$ with $D$ an open interval if $d=2$ or a bounded Lipschitz domain in $\RR^2$ if $d=3$.
	Then there are infinitely many non-scattering frequencies for $(A,q;\Omega)$ with
	\begin{equation*}
		A(\x)=\left(\begin{array}{cc}
			a_0&0\\0&A_{d-1}(\x)
		\end{array}\right)\AND
 q(\x)=a_0,\qquad \x\in\Omega,
	\end{equation*}
	for some positive constant $a_0$ and some $(d-1)\times (d-1)$ symmetric positive definite matrix $A_{d-1}$.
\end{example}
\begin{proof}
	It can be verified straightforwardly that $v(\x)=\cos(k(x_1-b_1))$ is a non-scattering incident field with $u=v$ being the total field for $k=m\pi/|b_2-b_1|$, $m\in\mathbb{N}^\ast$.
\end{proof}
\begin{remark}
    We notice from Example~\ref{ex:3.11} that if $a_0=1$ in Example~\ref{ex:3.13}, the sets $\mathscr{S}_{\mrm{NS}}$ and hence $\mathscr{S}_{\mrm{TE}}$ coincide with $\RR^+$. However, in some other cases, for example when both $a_0$ and the smallest eigenvalue of $A_{d-1}(\x)$ are larger than $1$ for all $\x$ in a small neighbourhood of $\partial\Om$, the sets $\mathscr{S}_{\mrm{TE}}$ and hence $\mathscr{S}_{\mrm{NS}}$ are discrete \cite{BoCH11,NgNg17}. The discreteness of $\mathscr{S}_{\mrm{TE}}$ or $\mathscr{S}_{\mrm{NS}}$ in general is unknown for Example~\ref{ex:3.13} especially if $\det A=1$ on $\partial\Omega$.
\end{remark}
\noindent Finally, we note for all the examples in this section that $\nu\cdot(A-\mrm{Id})\nabla v=0$ on $\partial\Omega$, which allows us to construct non-scattering media with Lipschitz (and even less regular) boundaries. Otherwise if $\nu\cdot(A-\mrm{Id})\nabla v\neq 0$ on $\partial\Omega$, we know \Blu{from \cite{CakoniVX23,KSS24} that} $\partial\Omega$ must be sufficiently regular if $A$ and $q$ are regular in $\overline{\Omega}$.

\section*{Acknowledgments}
The work of J. Xiao was supported in part by the National Science Foundation under
Grant DMS-2307737. 

\section*{Data availability}
No data sets were generated during this study.
\section*{Conflict of interest}
The authors declare no conflict of interest in this paper.
\section*{Appendix}
The following figures illustrate how to justify the existence of analytic nodal curves as shown in Figures~\ref{fig:EPmore}--\ref{fig:EPLip0}. Below the corresponding functions are positive in the yellow regions and negative in the blue regions.\\
\newline
As an example of such justification, let us look at the nodal set of Figure~\ref{fig:AnaLoc1}. That is, when $L=2$, $\mu=5/2$ and $a=0.58$ in \eqref{eq:fR}.
Based on the ``generating function'' $J_{\mu}(kr)\cos(\mu\theta)$, we can identify the positive and negative regions of each terms $l=0,\ldots,L-1$ in \eqref{eq:fR}. See Figure~\ref{fig:AnaLocIll1} for an example for the term $l=0$ in \eqref{eq:fR}, where the dashed lines are the nodal curves of the terms $l=1,\ldots,L-1$ in \eqref{eq:fR}. Then we can find certain regions where $v$ in \eqref{eq:fR} has to be either positive/negative, say, the domains where every term is of the same sign (see Figure~\ref{fig:AnaLocIll2}). Therefore, there must be a nodal set in between the strictly positive and the strictly negative regions.
The justification for Figures~\ref{fig:AnaLoc2},~\ref{fig:AnaLoc3},~and~\ref{fig:EPintSmooth} is similar.
\begin{figure}[!ht]
	\begin{subfigure}{.29\textwidth}
		\centering
		\includegraphics[width=\textwidth]{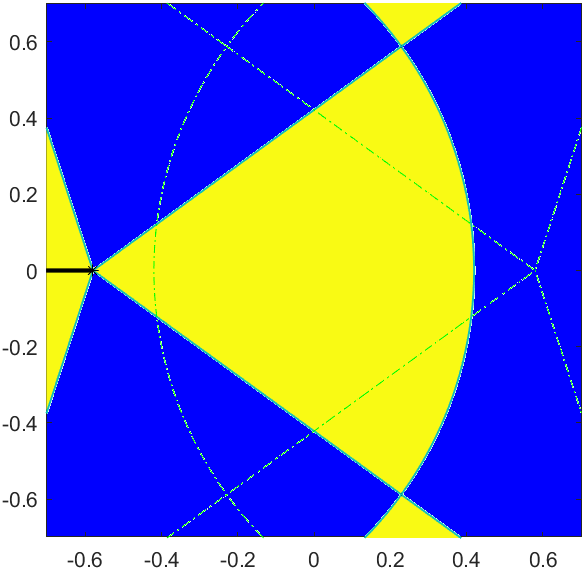}
		\caption{The term $l=0$ in \eqref{eq:fR}.}
		\label{fig:AnaLocIll1}
	\end{subfigure}\hfil
	\begin{subfigure}{.29\textwidth}
		\centering
		\includegraphics[width=\textwidth]{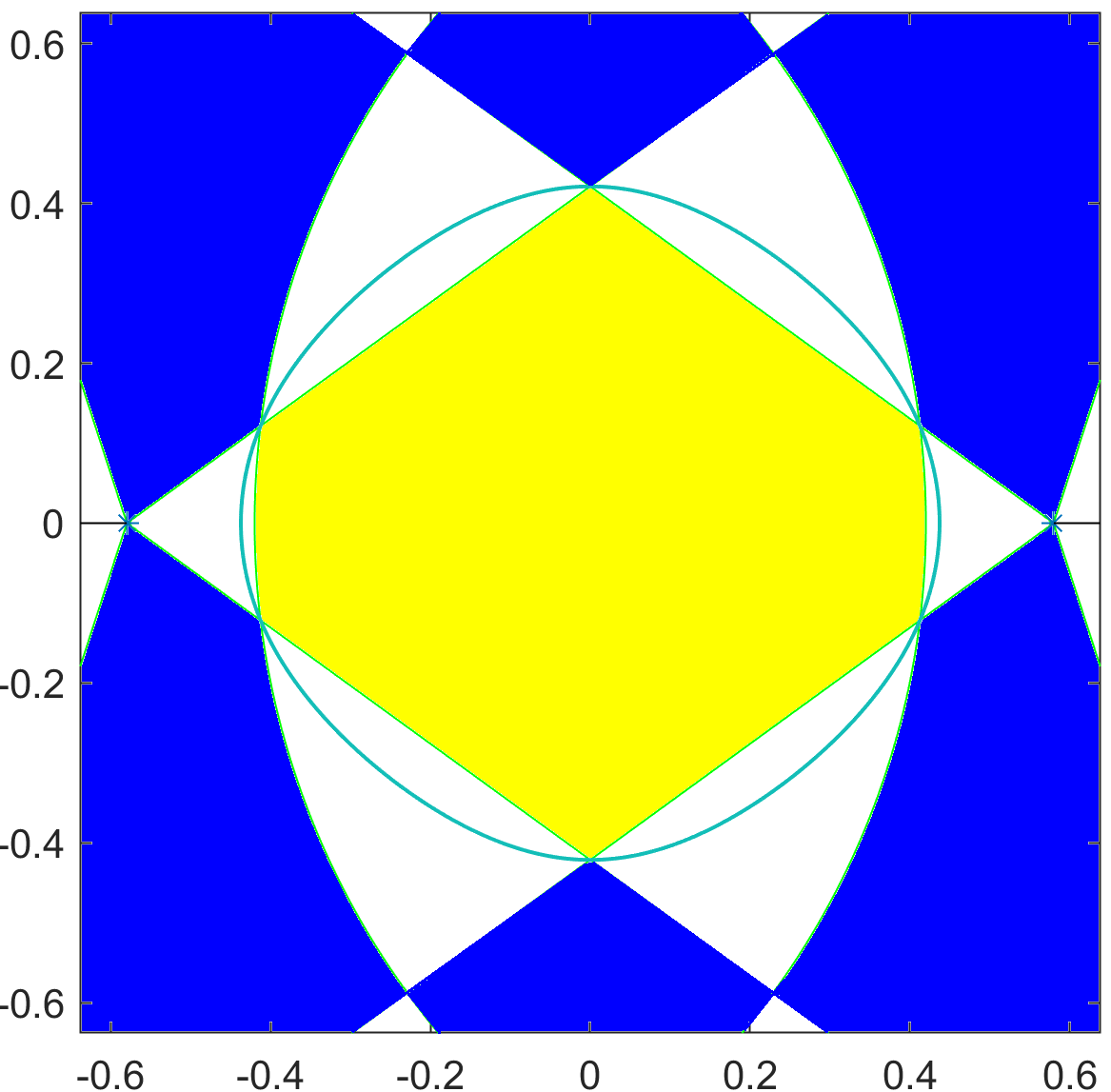}
		\caption{$v$ in \eqref{eq:fR}.}
		\label{fig:AnaLocIll2}
	\end{subfigure}\hfil
	\begin{subfigure}{.33\textwidth}
		\centering
		\includegraphics[width=\textwidth]{n=2.5L=2a=0.58.png}
		\caption{The nodal curves of $v$.}
	\end{subfigure}
	\caption{Justification for Figure~\ref{fig:AnaLoc1}: $L=2$, $\mu=5/2$, $a=0.58$.}
	\label{fig:AnaLoc1justi}
\end{figure}
\begin{figure}[!ht]
	\begin{subfigure}{.29\textwidth}
		\centering
		\includegraphics[width=\textwidth]{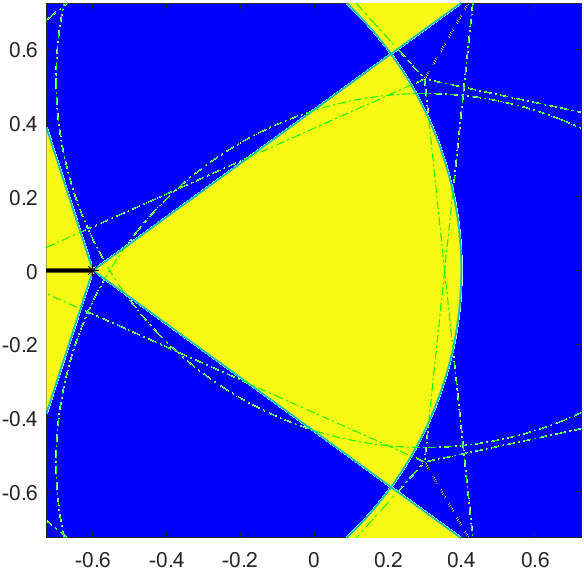}
		\caption{The term $l=0$ in \eqref{eq:fR}.}
	\end{subfigure}\hfil
	\begin{subfigure}{.29\textwidth}
		\centering
		\includegraphics[width=\textwidth]{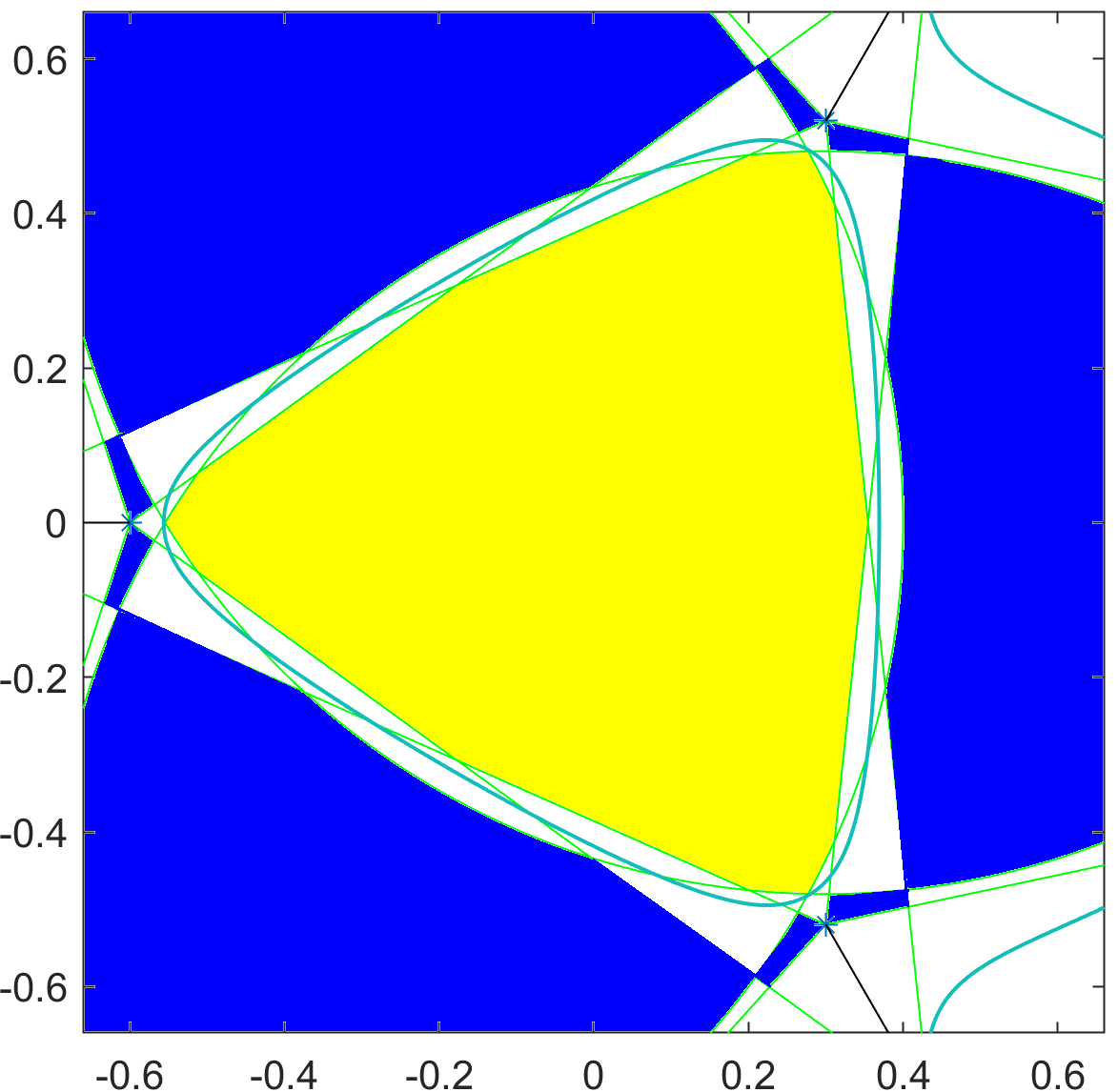}
		\caption{$v$ in \eqref{eq:fR}.}
	\end{subfigure}\hfil
	\begin{subfigure}{.33\textwidth}
		\centering
		\includegraphics[width=\textwidth]{n=2.5L=3a=0.6.png}
		\caption{The nodal curves of $v$.}
	\end{subfigure}
	\caption{Justification for Figure~\ref{fig:AnaLoc2}: $L=3$, $\mu=5/2$, $a=0.6$.}
\end{figure}
\begin{figure}[!ht]
	\begin{subfigure}{.29\textwidth}
		\centering
		\includegraphics[width=\textwidth]{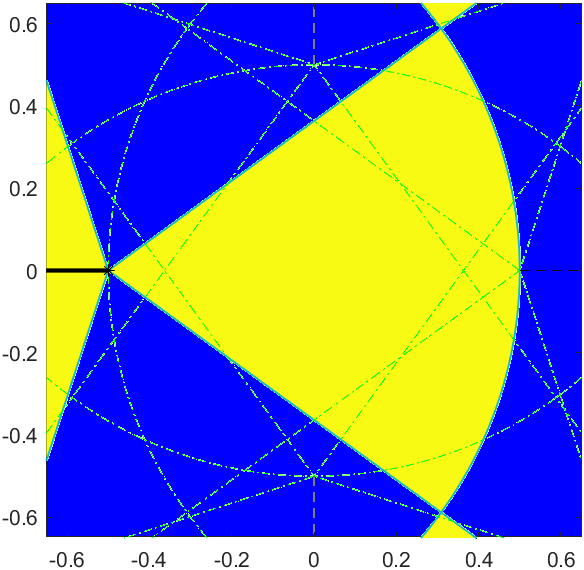}
		\caption{The term $l=0$ in \eqref{eq:fR}.}
	\end{subfigure}\hfil
	\begin{subfigure}{.29\textwidth}
		\centering
		\includegraphics[width=\textwidth]{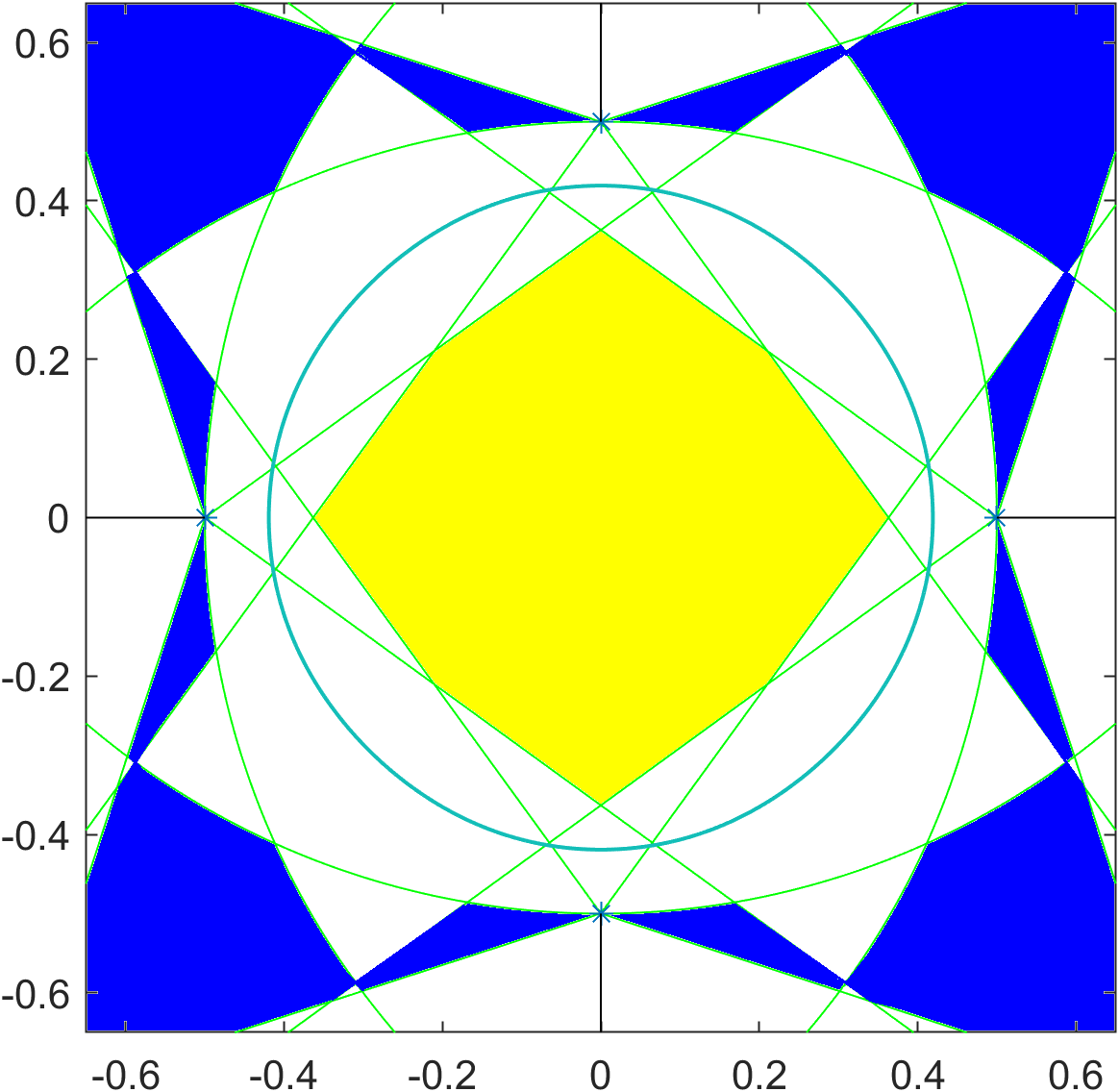}
		\caption{$v$ in \eqref{eq:fR}.}
	\end{subfigure}\hfil
	\begin{subfigure}{.33\textwidth}
		\centering
		\includegraphics[width=\textwidth]{n=2.5L=4a=0.5.png}
		\caption{The nodal curves of $v$.}
	\end{subfigure}
	\caption{Justification for Figure~\ref{fig:AnaLoc3}: $L=4$, $\mu=5/2$, $a=0.5$.}
\end{figure}
\begin{figure}[!ht]
	\begin{subfigure}{.29\textwidth}
		\centering
		\includegraphics[width=\textwidth]{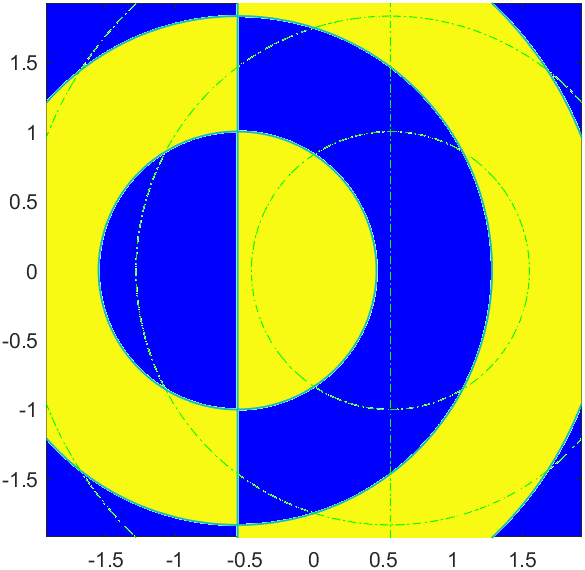}
		\caption{The term $l=0$ in \eqref{eq:fR}.}
	\end{subfigure}\hfil
	\begin{subfigure}{.29\textwidth}
		\centering
		\includegraphics[width=\textwidth]{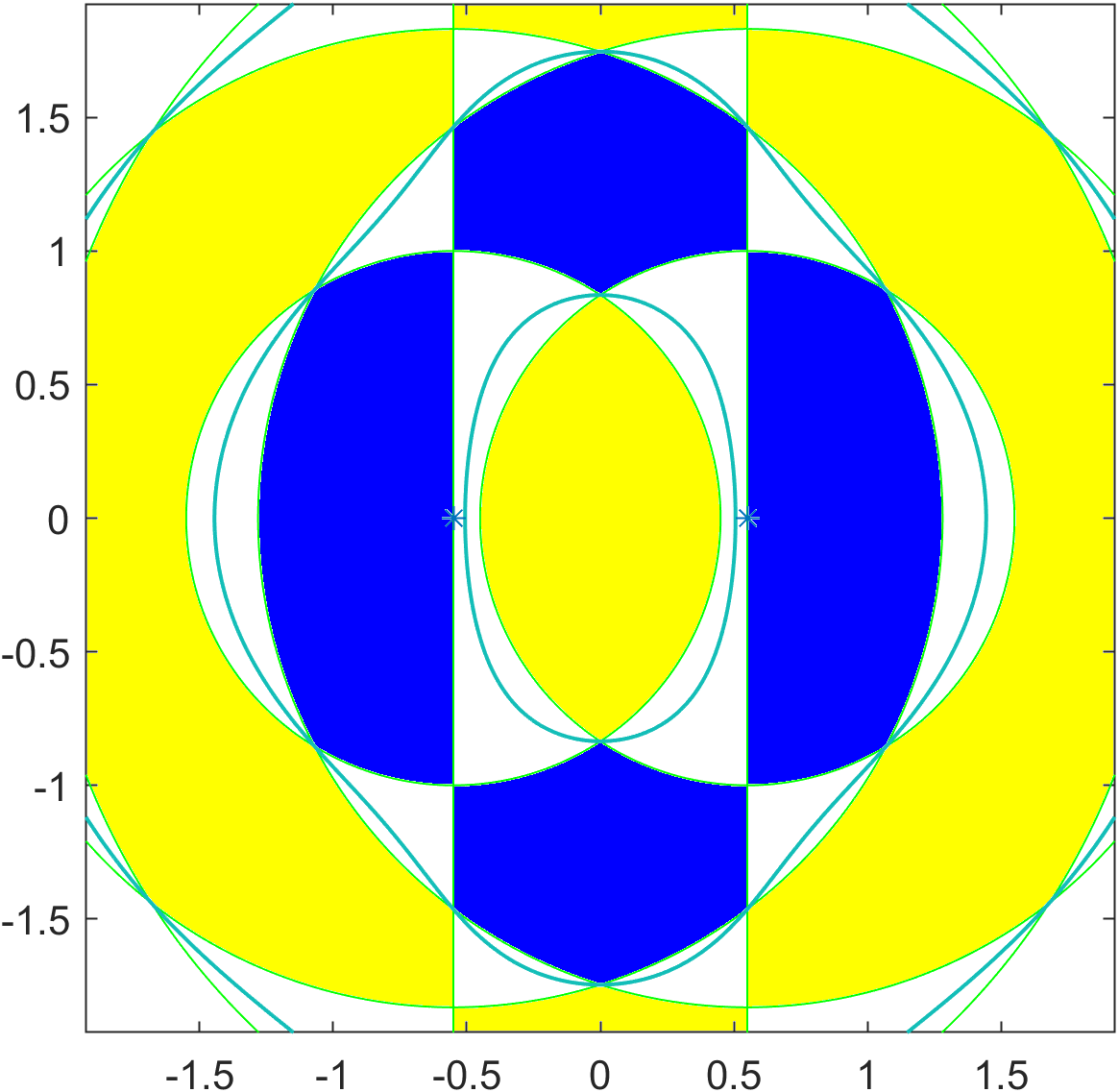}
		\caption{$v$ in \eqref{eq:fR}.}
	\end{subfigure}\hfil
	\begin{subfigure}{.33\textwidth}
		\centering
		\includegraphics[width=\textwidth]{n=1L=2a=0.55.png}
		\caption{The nodal curves of $v$.}
	\end{subfigure}
	\caption{Justification for Figure~\ref{fig:EPintSmooth}: $L=2$, $\mu=1$, $a=0.55$.}
\end{figure}

\noindent The situation for the case of Figure~\ref{fig:EPintLip4} is slightly different. First, we can identify the positive and negative regions for each term in \eqref{eq:fRgen} as before, and hence some one-sign regions for $v$ in \eqref{eq:fRgen} (see Figures~\ref{fig:EPintLip4Ill1}~and~\ref{fig:EPintLip4Ill2}).
In particular, thanks to the symmetry of the two terms in \eqref{eq:fRgen}, we have that $v=0$ on the axis $\{y=0\}$.
Additionally, we observe that $v=0$ at the two points $(0,\pm\sqrt{1-a^2})$, where in fact both terms in \eqref{eq:fRgen} are zero. Moreover, it can be verified straightforwardly that $\nabla v\neq 0$ at $(0,\pm\sqrt{1-a^2})$. So there is exactly one (real-analytic) nodal curve of $v$ passing through the point $(0,\sqrt{1-a^2})$. This curve will continue, say, to the right side of $(0,\sqrt{1-a^2})$ until it hits some point $(x_1,0)$ for some $x_1\in[1-a,1+a]$.
On the other hand, using the fact that $a=(k_2/k-1)/2$ with $k_2$ the second positive zero of $J_2$, it can be verified that $\nabla v=0$ at the two points $\pm(a+1,0)$ and $\nabla v\neq 0$  at any $(x_1,0)$ with $x_1\in(-a-1,a+1)$.
Therefore, the nodal curve of $v$ generated from $(0,\sqrt{1-a^2})$ must also pass through $(a+1,0)$. In fact, one can also verify that the Hessian of $v$ satisfies $\mathcal{H} v\neq 0$ at $\pm(a+1,0)$. Hence there are exactly two (real-analytic) nodal curves passing at $(a+1,0)$: one is the line $\{y=0\}$ and the other is the one coming from $(0,\sqrt{1-a^2})$. Applying analogous arguments for the other three quadrant we can then complete the justification of a closed (real-analytic) nodal curve of $v$.
\begin{figure}[!ht]
	\begin{subfigure}{.29\textwidth}
		\centering
		\includegraphics[width=\textwidth]{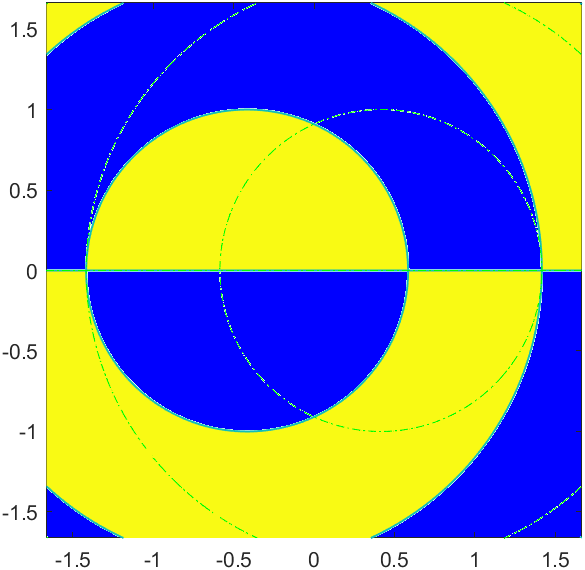}
		\caption{The term $l=0$ in \eqref{eq:fRgen}.}
		\label{fig:EPintLip4Ill1}
	\end{subfigure}\hfil
	\begin{subfigure}{.29\textwidth}
		\centering
		\includegraphics[width=\textwidth]{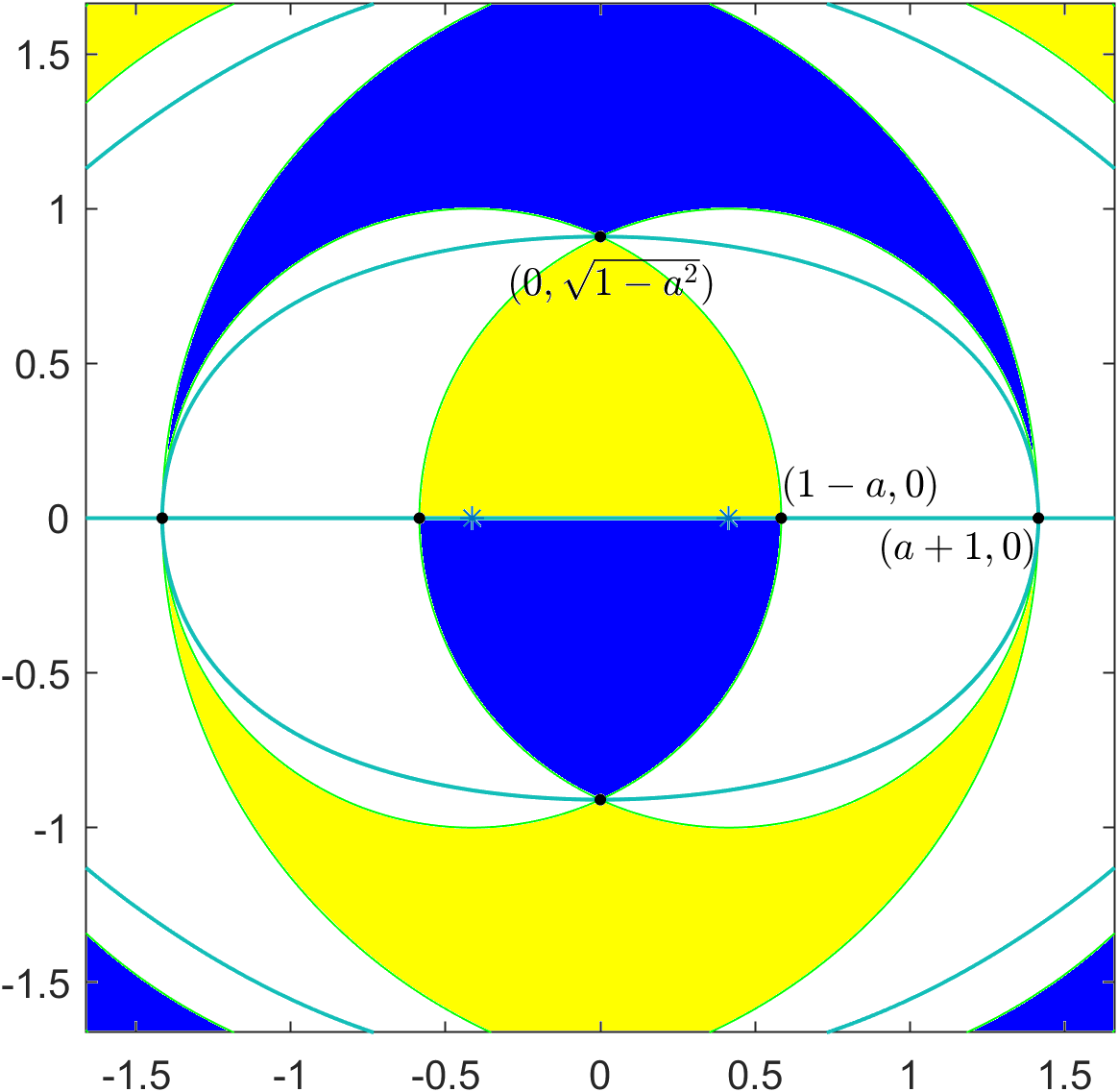}
		\caption{$v$ in \eqref{eq:fRgen}.}
		\label{fig:EPintLip4Ill2}
	\end{subfigure}\hfil
	\begin{subfigure}{.33\textwidth}
		\centering
		\includegraphics[width=\textwidth]{n=1L=2a=0.42special.png}
		\caption{The nodal curves of $v$.}
	\end{subfigure}
	\caption{Justification for Figure~\ref{fig:EPintLip4}: $L=2$, $\mu=1$, $a=\frac{{k_2}-{k}}{2k}$, $\phi_1=\pi/2=-\phi_2$.}
\end{figure}~

\noindent In Figure~\ref{fig:EPintLip5}, we make use of the function $v$ in \eqref{eq:fRgen} with $\mu=1$, $L=2$, $a=\sqrt{2}/2$, $\phi_1=\pi/4$ and $\phi_2=3\pi/4$. The choice of $\phi_1$ and $\phi_2$ ensures the symmetry of $v$ with respect to the $y$-axis. In particular, $v=0$ on $\{x=0\}$. In addition, thanks to the value of $a$, one can verify that at the point $P_0=(0,-\sqrt{2}/2)$ there holds $v=\partial_xv=\partial_yv=\partial^2_{ij}v=0$ for all $i,j\in\{x,y\}$ but $\partial_x^3v\neq 0$. Hence, there are exactly three nodal curves passing through $(0,-\sqrt{2}/2)$.
Furthermore, one can verify straightforwardly that $v=0$ while $\nabla v\neq 0$ at the points $P_j$, $j=1,\ldots,9$, where the coordinates of each point can be calculated explicitly in terms of $a$, $k$ and $k_2$.
Finally, based on the locations of strictly positive and negative regions we can conclude to the existence of real-analytic nodal curves for $v$ that enclose some bounded Lipschitz domains, as shown Figure~\ref{fig:EPintLip50}.
\begin{figure}[!ht]
	\begin{subfigure}{.29\textwidth}
		\centering
		\includegraphics[width=\textwidth]{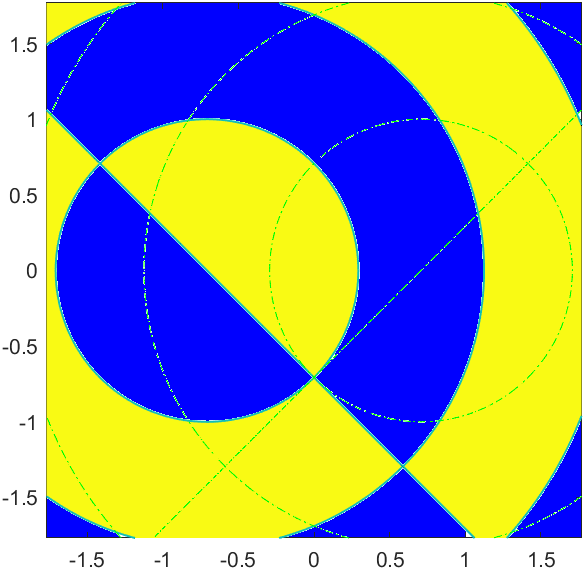}
		\caption{The term $l=0$ in \eqref{eq:fRgen}.}
	\end{subfigure}\hfil
	\begin{subfigure}{.29\textwidth}
		\centering
		\includegraphics[width=\textwidth]{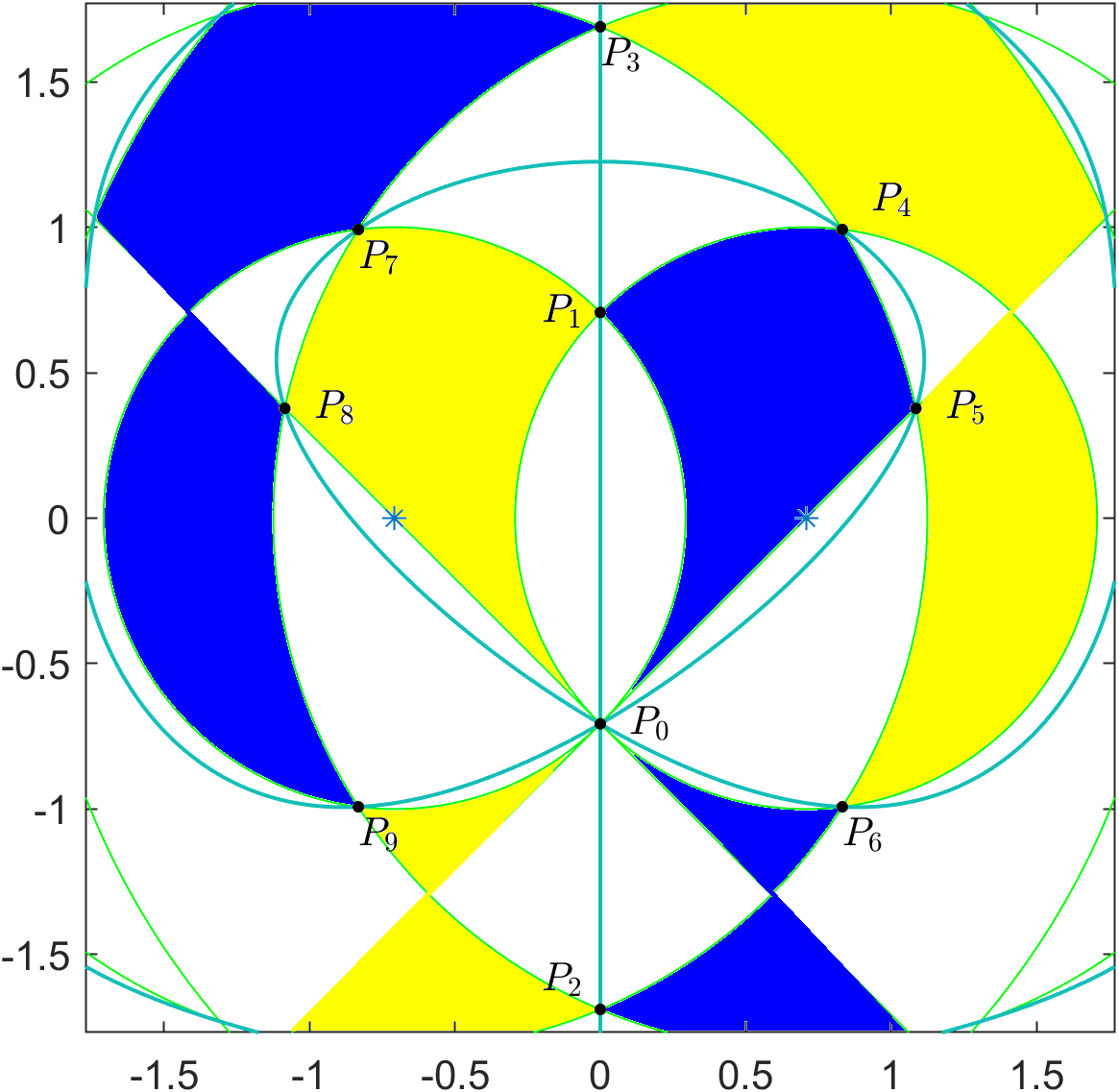}
		\caption{$v$ in \eqref{eq:fRgen}.}
	\end{subfigure}\hfil
	\begin{subfigure}{.33\textwidth}
		\centering
		\includegraphics[width=\textwidth]{n=1L=2a=0.71special.png}
		\caption{The nodal curves of $v$.}
		\label{fig:EPintLip50}
	\end{subfigure}
	\caption{Justification for Figure~\ref{fig:EPintLip5}: $L=2$, $\mu=1$, $a=\sqrt{2}/2$, $\phi_1=\pi/4$, $\phi_2=3\pi/4$.}
\end{figure}

\noindent The justification of the results appearing in Figure~\ref{fig:EPLip0} is similar. 
\begin{figure}[!ht]
	\begin{subfigure}{.29\textwidth}
		\centering
		\includegraphics[width=\textwidth]{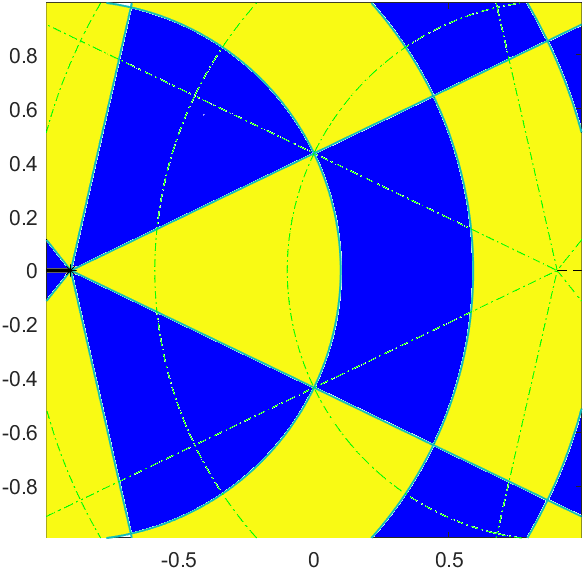}
		\caption{The term $l=0$ in \eqref{eq:fRgen}.}
	\end{subfigure}\hfil
	\begin{subfigure}{.29\textwidth}
		\centering
		\includegraphics[width=\textwidth]{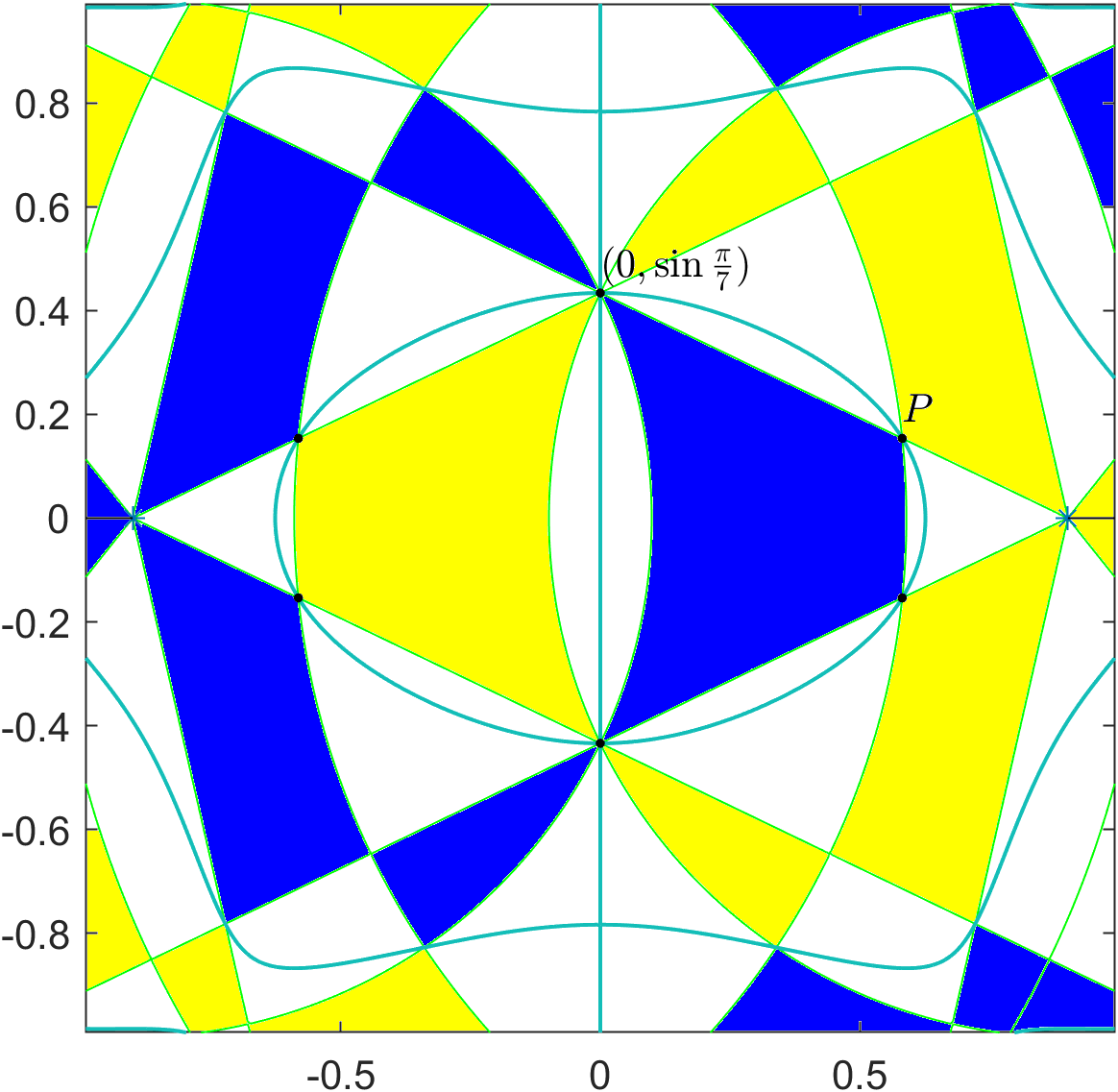}
		\caption{$v$ in \eqref{eq:fRgen}}
	\end{subfigure}\hfil
	\begin{subfigure}{.33\textwidth}
		\centering
		\includegraphics[width=\textwidth]{n=3.5L=2a=0.9special.png}
		\caption{The nodal curves of $v$.}
	\end{subfigure}
	\caption{Justification for Figure~\ref{fig:EPLip}: $L=2$, $\mu=7/2$, $a=\cos(\pi/7)$, $b_0=1=-b_1$.}
\end{figure}
\begin{figure}[!ht]
	\begin{subfigure}{.29\textwidth}
		\centering
		\includegraphics[width=\textwidth]{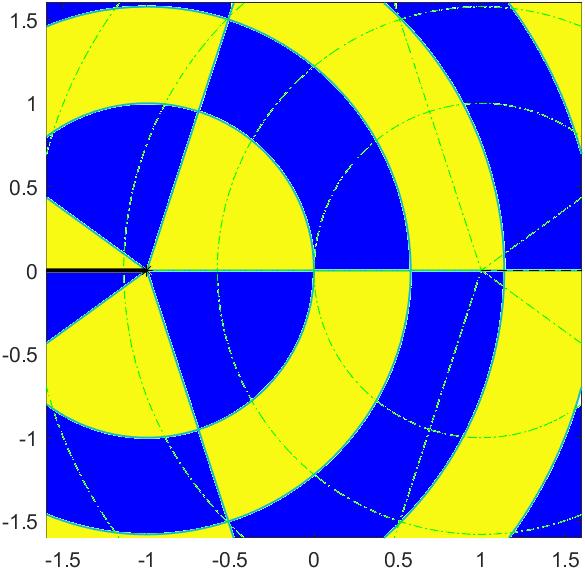}
		\caption{The term $l=0$ in \eqref{eq:fRgen}.}
	\end{subfigure}\hfil
	\begin{subfigure}{.29\textwidth}
		\centering
		\includegraphics[width=\textwidth]{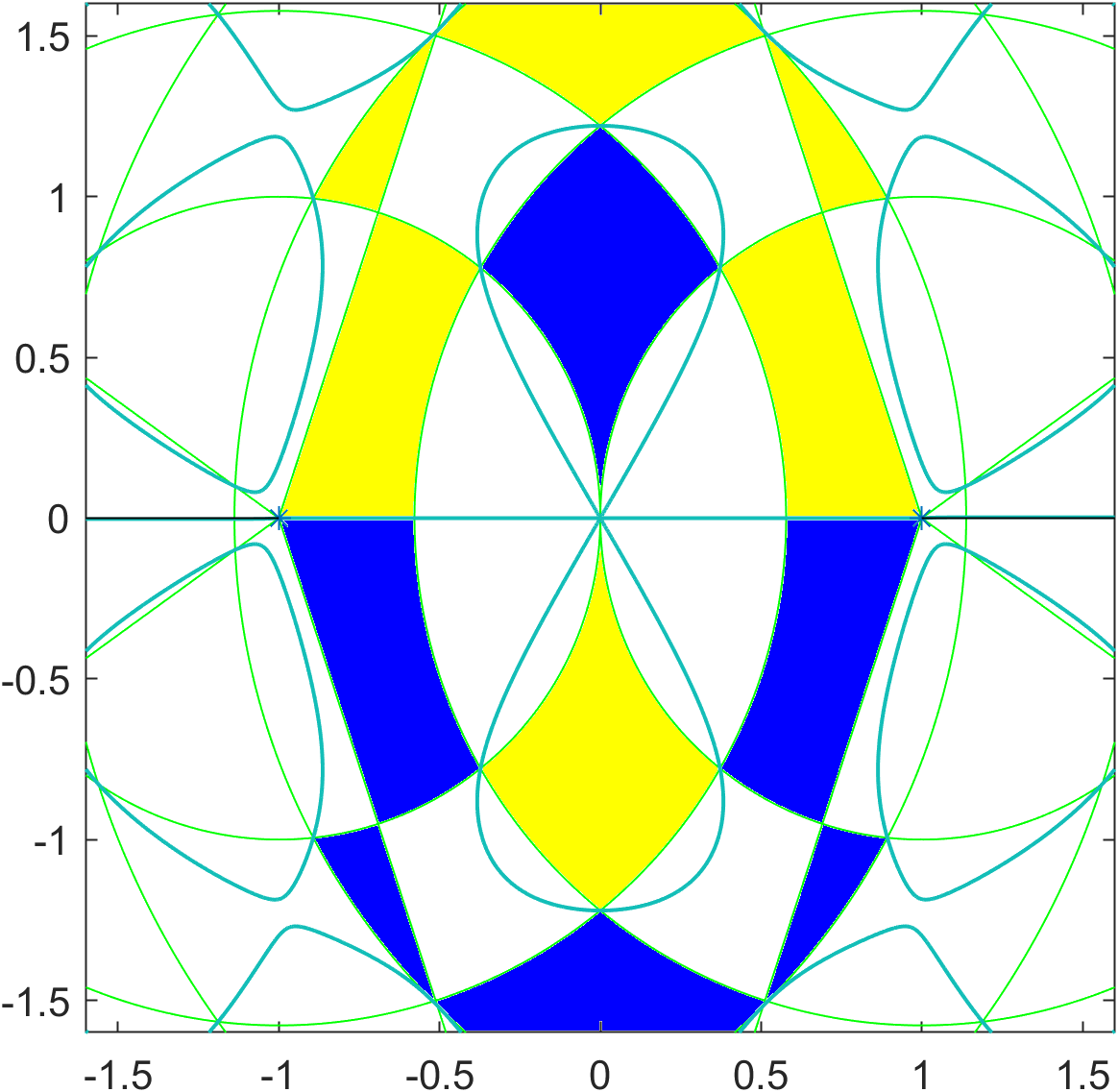}
		\caption{$v$ in \eqref{eq:fRgen}.}
	\end{subfigure}\hfil
	\begin{subfigure}{.33\textwidth}
		\centering
		\includegraphics[width=\textwidth]{n=2.5L=2a=1special.png}
		\caption{The nodal curves of $v$.}
	\end{subfigure}
	\caption{Justification for Figure~\ref{fig:EPLip2}: $L=2$, $\mu=5/2$, $a=1$, $\phi_0=\pi/5=-\phi_1$.}
\end{figure}
\begin{figure}[!ht]
	\begin{subfigure}{.29\textwidth}
		\centering
		\includegraphics[width=\textwidth]{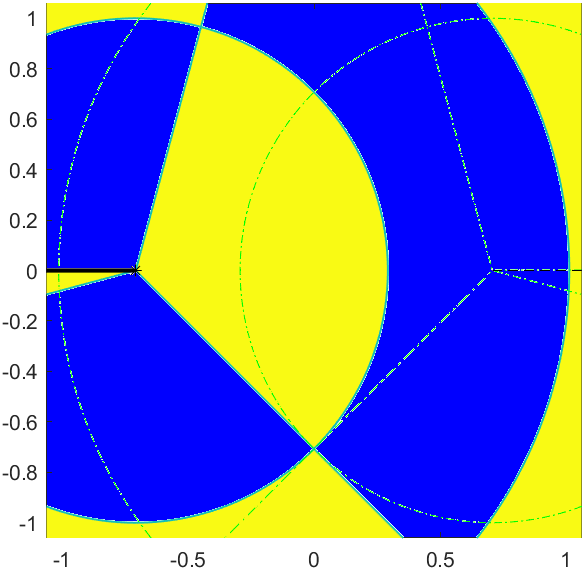}
		\caption{The term $l=0$ in \eqref{eq:fRgen}.}
	\end{subfigure}\hfil
	\begin{subfigure}{.29\textwidth}
		\centering
		\includegraphics[width=\textwidth]{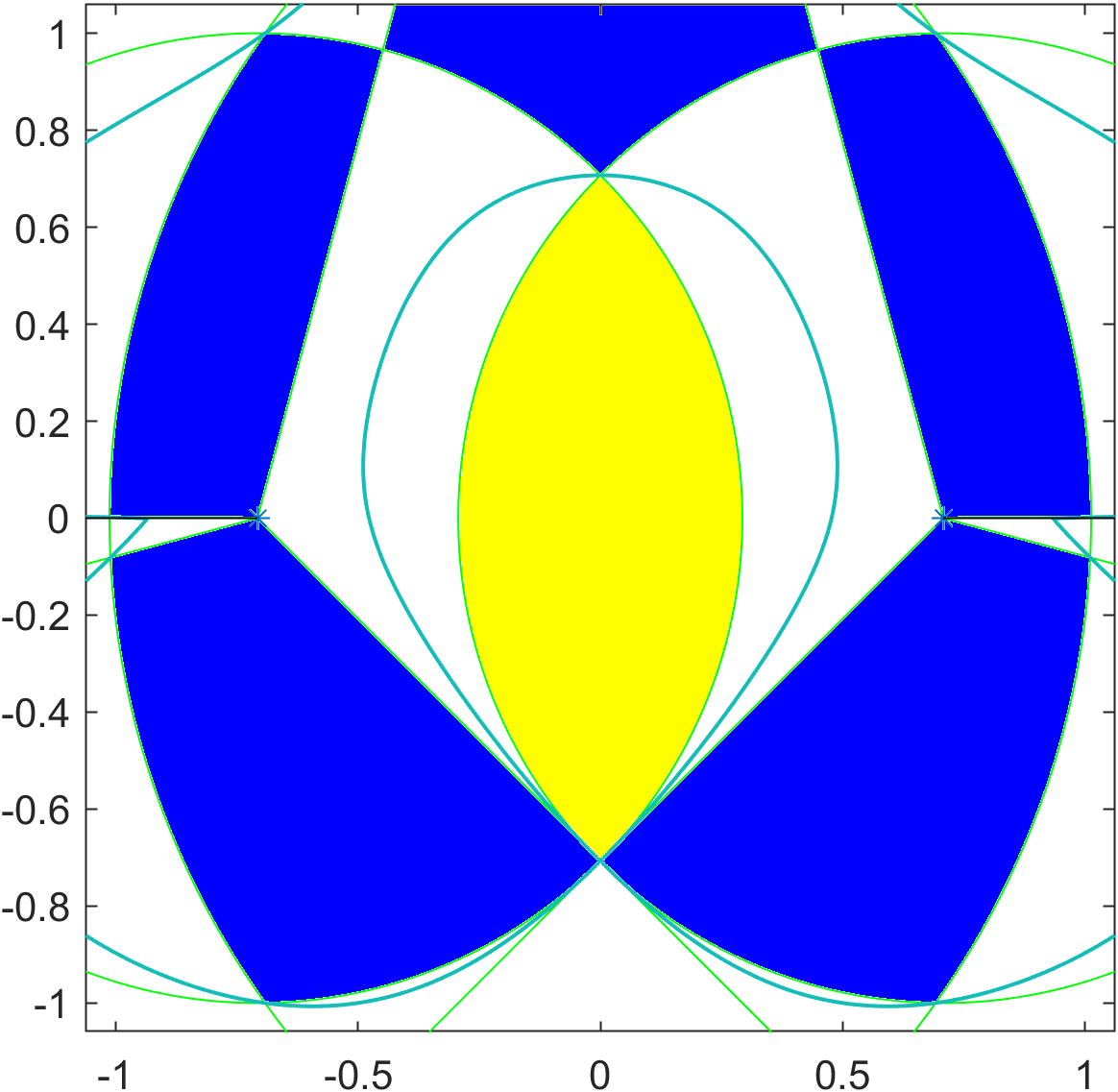}
		\caption{$v$ in \eqref{eq:fRgen}.}
	\end{subfigure}\hfil
	\begin{subfigure}{.33\textwidth}
		\centering
		\includegraphics[width=\textwidth]{n=1.5L=2a=0.71special.png}
		\caption{The nodal curves of $v$.}
	\end{subfigure}
	\caption{Justification for Figure~\ref{fig:EPLip3}: $L=2$, $\mu=3/2$, $a=\sqrt{2}/2$, $\phi_0=\pi/12=-\phi_1$.}
\end{figure}

\clearpage

	\bibliographystyle{abbrv}
	\bibliography{Biblio}
\end{document}